\numberwithin{equation}{section}
\newtheorem{theorem}{Theorem}[section]
\newtheorem{lemma}[theorem]{Lemma}
\newtheorem{proposition}[theorem]{Proposition}
\newtheorem{corollary}[theorem]{Corollary} 
\newtheorem{algorithm}[theorem]{Algorithm}
\theoremstyle{definition}
\newtheorem{definition}[theorem]{Definition} 
\newtheorem{exmp}[theorem]{Example}
\numberwithin{equation}{theorem}
\newcommand{\cone}{\mathrm{cone}}
\title[Equidistant Circular Split Networks]{Equidistant Circular Split Networks}
\author{Bryson Kagy and Seth Sullivant}
\begin{document}

\maketitle

\begin{abstract}
    Phylogenetic networks are generalizations of trees that allow for the modeling of
    non-tree like evolutionary processes.
    Split networks give a useful way to construct networks with intuitive distance structures
    induced from the associated split graph.
    We explore the polyhedral geometry of distance matrices built from
    circular split systems which have the added property of being equidistant.
    We give a characterization of the facet defining inequalities and the extreme rays
    of the cone of distances that arise from an equidistant network associated with any circular 
    split network.  We also explain a connection to the Chan-Robbins-Yuen polytope
    from geometric combinatorics.
\end{abstract}


\section{Introduction}

Phylogenetics concerns uncovering evolutionary relationships between collections of species.
Traditionally, these relationships are represented by trees.
The combinatorics of rooted tree structures, and distances derived from trees are a 
staple of phylogenetic inference and at the heart of much of the mathematics of evolutionary
biology.  This is the perspective in classic books like \cite{Felsenstein2003, phy}.
However, the presence of evolutionary processes that produce non-tree-like structures
among species have been realized to play an important role  in evolution.

Non-tree-like evolutionary processes include horizontal gene transfer, 
hybridization, and introgression.   
It is desirable to have
phylogenetic structures that can encode these types of more complex, non-tree-like relationships.
This has led to the creation and study of phylogenetic networks as a tool
for phylogenetic inference, where the network structure can encode different types of non-tree-like relationships.

There are a few different approaches to making phylogenetic networks (see \cite[Ch 10]{Steel}).
Some of the choices of which network structure to use are based on which modeling paradigm is 
employed, others are based on which inference techniques are being used, and yet others are just
based on whether the mathematics is interesting. See \cite{Network2022} for a discussion of the different classes of networks.
In this paper, we study the mathematics of distances based on phylogenetic networks.
For this approach, one of the  most natural network structures to study is based on 
split networks.  This is because split networks are naturally tied to cut-semimetrics and
the cut cone, so they naturally fit into the framework of those well-studied objects \cite{Deza2010}.

The most studied family of non-tree-like split networks is the family of circular
split networks because they are the type of split network produced by the NeighborNet algorithm
\cite{Bryant2004}, a widely used algorithm, cited by 2318 papers on Google Scholar as of August 2024.
The geometry of metrics associated with circular split systems is well-studied.
The set of metrics compatible with a particular circular ordering are the Kalmanson metrics.  
Kalmanson metrics associated with the standard ordering $1, \ldots, n$
are metrics $\delta$ on $[n]$ that satisfy the inequalities
\[
\delta(i,j) + \delta(k,l) \leq  \delta(i,k) + \delta(j,l)  \quad \mbox{ and } \quad 
\delta(i,l) + \delta(j,k) \leq  \delta(i,k) + \delta(j,l)   
\]
whenever $i < j < k < l$.    This condition is also famous in combinatorial optimization because
the traveling salesman problem can be solved in polynomial time if distance constraints for the problem
come from a Kalmanson metric \cite{Kalmanson1975}.  There are  a number of papers that explore
the connection between Kalmanson metrics and phylogenetics \cite{main, LevyPachter2011}.

In our paper, we explore a variation of metrics associated to circular split systems where we
add the extra condition that the metric is equidistant.  The equidistant condition means that the
network has a special root vertex, and each of the vertices $i \in [n]$ is the same distance
from the root. In the context of biology, an equidistant circular split network would represent a situation where there is a set of species that evolve at similar rates and are hybridizing with each other. The cone of equidistant circular split networks is the space of all possible distance functions that make an equidistant circular split network on those species. Thus if there is a set of closely related species that are suspected of hybridization, one could see, using the facet description, if the vector of pairwise distances are inside or near the cone of equidistant circular split networks. In addition, the study of the cone of equidistant circular split networks is of mathematical interest as a step towards creating a network version of UPGMA \cite{Michener57}. UPGMA is a popular algorithm for creating phylogenetic trees from pairwise distances which always produces equidistant trees. In the same way that NeighborNet is a generalization of the neighbor joining algorithm, understanding equidistant circular split networks will help in creating a similar algorithm generalization.

 Our main results are a characterization of the inequalities and the extreme rays that
define the cone of equidistant circular split networks.  
The resulting inequality system that arises is a kind of restriction of the Kalmanson conditions
to take into account the equidistant condition shown in Theorem \ref{facets}. On the other hand, while the general Kalmanson cone has only $\binom{n}{2}$ extreme rays, the
general cone of equidistant circular metrics has $2^{n-1}-1$ extreme rays shown, as in Theorem \ref{thm:extremerays}.
In addition, we show that every face of the cone  circular equidistant networks
also corresponds to metrics for subnetworks, and characterize their inequalities and extreme rays.
Finally, we show that the cone of equidistant circular split networks is closely related to the
Chan-Robbins-Yuen polytope from geometric combinatorics.


\section{Split Networks}

We introduce the notion of split system and split networks. 
Split networks generalize phylogenetic trees by allowing 
for some limited cycles in the graph structure. 
This is inspired by hybridization and reticulation events in biology, which introduce cycles into phylogenetic trees. 
The material in this section is standard in the literature, and more background on 
split systems and split networks can be found in \cite{Steel}.

\begin{definition}
 Let $X$ be a set of  labels with $|X|=n$. A \textit{split} $A|B$ is a partition of $X$  into two nonempty sets. 
 A split is a \textit{trivial split} if one part of the partition has cardinality one. 
 A set of splits is called a \textit{split system}.
\end{definition}
Split graphs and split networks are visual tools used to represent a split system.

\begin{definition}
Let $G= (V,E)$ be a connected bipartite finite graph, $K$ a finite set of labels, and $s$ a surjective map $s:E\to K$. 
The pairing $(G,s)$ is a \emph{split graph} if for all $u$, $v\in V$ 
and for each shortest path $p$ between $u$ and $v$, $s$ maps the edges on $p$ 
one-to-one to a  $S(u,v)\subseteq K$ with $S(u,v)$ the same for all such $p$. 
\end{definition}

One feature of a split graph is that for every $k\in K$, removing all the edges with label $k$ breaks the graph into 
two components, as  seen in  Proposition \ref{2comp}.

\begin{proposition} \cite{Huson_Rupp_Scornavacca_2010}\label{2comp}
Let $(G,s)$ be a split graph with $G= (V,E)$ and $s:E\to K$. 
For any $k\in K$, let $E_k   =  \{  e \in E :  s(e) = k \}. $  
Then the graph $(V, E-E_k)$ has exactly two connected components for every $k\in K$.
\end{proposition}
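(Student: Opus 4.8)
The plan is to show that removing the edge class $E_k$ disconnects the graph into at least two pieces and then that there are at most two. First I would argue that $(V, E-E_k)$ is disconnected. Since $s$ is surjective, there exists an edge $e$ with $s(e)=k$; let $u$ and $v$ be its endpoints. Along the single-edge path $e$, the map $s$ hits $k$, so $k \in S(u,v)$. Because $S(u,v)$ is the same set of labels for \emph{every} shortest path between $u$ and $v$, every shortest $u$--$v$ path must use an edge labeled $k$. More than that, because $s$ restricted to any shortest path is one-to-one, each shortest path uses \emph{exactly one} edge of label $k$. I would then upgrade this to: every $u$--$v$ path (not just shortest ones) must cross a $k$-labeled edge. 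For this I plan to use the bipartite structure together with the parity/length bookkeeping that the split-graph axioms force, so that there is no way to route around all $k$-edges. This separates $u$ from $v$ in $(V, E-E_k)$, giving at least two components.

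Next I would show there are \emph{exactly} two components, i.e.\ removal of $E_k$ cannot shatter the graph into three or more pieces. The natural approach is to define an equivalence-type relation on $V$ using the ``$k$-side'' of each vertex. Fix a base vertex $x_0$ and, for each vertex $w$, look at a shortest path from $x_0$ to $w$ and record whether the label $k$ appears in $S(x_0,w)$ (equivalently, whether that path crosses a $k$-edge). Since $S(x_0,w)$ is well-defined independent of which shortest path is chosen, this gives a well-defined two-valued function $c\colon V \to \{0,1\}$, where $c(w)=0$ if $k \notin S(x_0,w)$ and $c(w)=1$ if $k \in S(x_0,w)$. The goal is to prove that $c$ is constant on each connected component of $(V, E-E_k)$ and that the two level sets $c^{-1}(0)$ and $c^{-1}(1)$ are each connected; together these yield exactly two components.

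The key computational step supporting this is the following claim: for an edge $e=\{w,w'\}$ with $s(e)\neq k$, the endpoints satisfy $c(w)=c(w')$, whereas for $s(e)=k$ we have $c(w)\neq c(w')$. This should follow from the ``symmetric difference'' behavior of the label sets $S$ across a single edge: traversing an edge labeled $\ell$ toggles membership of $\ell$ in the running label set while leaving $k$-membership unchanged when $\ell\neq k$. If $c$ is locally constant across all non-$k$ edges, then it is constant on each component of $(V,E-E_k)$, so the number of components is at most the number of values $c$ takes, namely two. Combined with the separation argument above giving at least two, this pins the count at exactly two.

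The hard part will be rigorously establishing the ``toggling'' behavior of the label sets along edges, i.e.\ that moving across an edge with label $\ell$ changes $S$ only in the coordinate $\ell$. The split-graph definition only directly asserts that $s$ is injective along each shortest path and that $S(u,v)$ is path-independent; translating this into a clean rule for how $S(x_0,w)$ evolves edge-by-edge requires care, because concatenating a shortest path to $w$ with the edge $\{w,w'\}$ need not itself be a shortest path to $w'$. I expect to handle this by a careful case analysis on whether appending the edge shortens, lengthens, or keeps the distance, using injectivity of $s$ along shortest paths in each case to control which labels can appear or disappear. Once this local structural lemma is in hand, both the lower bound (at least two components) and the upper bound (at most two) follow cleanly.
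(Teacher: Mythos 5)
You should note at the outset that the paper itself offers no proof of this proposition; it is cited to Huson--Rupp--Scornavacca, so your proposal is measured against the standard argument (essentially the partial-cube/$\Theta$-class proof), and your overall architecture matches it. One reassurance first: the ``hard part'' you flag, the toggling lemma, is easier than you fear. For adjacent $w,w'$, bipartiteness forces $d(x_0,w)$ and $d(x_0,w')$ to have opposite parity, and the triangle inequality bounds their difference by $1$, so they differ by exactly $1$ --- your ``keeps the distance'' case is vacuous. In the remaining case (say $d(x_0,w')=d(x_0,w)+1$), a shortest path to $w$ followed by the edge $e$ \emph{is} a shortest path to $w'$, and injectivity of $s$ along it gives $s(e)\notin S(x_0,w)$ and $S(x_0,w')=S(x_0,w)\,\triangle\,\{s(e)\}$ immediately; the shortening case is symmetric. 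With toggling in hand, your parity argument for ``at least two components'' is sound: along any walk, $c$ flips exactly at $k$-edges, and the endpoints of any edge labeled $k$ have $c(u)\neq c(v)$, so no path avoiding $E_k$ joins them.

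The genuine gap is in your ``at most two'' step. You write that since $c$ is constant on each component of $(V,E-E_k)$, ``the number of components is at most the number of values $c$ takes, namely two.'' That inference is false: local constancy only shows each component is \emph{contained in} a level set, and a level set could a priori break into several components (any graph with three components colored $0,0,1$ is locally constant across all its edges). You do state the correct goal earlier --- that $c^{-1}(0)$ and $c^{-1}(1)$ are each connected in $(V,E-E_k)$ --- but the proposal never supplies an argument for it, and the explicit conclusion you draw substitutes the invalid counting step in its place. The missing argument is this: iterating the toggling lemma along a shortest $u$--$v$ path, whose labels are pairwise distinct, yields $S(x_0,v)=S(x_0,u)\,\triangle\,S(u,v)$, hence $c(u)=c(v)$ if and only if $k\notin S(u,v)$; and when $k\notin S(u,v)$, every shortest $u$--$v$ path avoids $E_k$ entirely, so $u$ and $v$ lie in the same component of $(V,E-E_k)$. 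This proves each level set is connected, and since both are nonempty (the two endpoints of any $k$-edge, which exists by surjectivity of $s$), the component count is exactly two. With that paragraph added, your proof is complete and correct.
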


Now the notion of split graph can be combined with a split system 
to define the notion of a split graph representing a split system, which is called a split network.

\begin{definition}
Let $N$ be a split system on  $X$. Let $(G,s)$ be a split graph with $G= (V,E)$ and $s:E\to K$. 
Let $f:X\to V$ be a map such that for all $A|B\in N$ there exists a $k({A|B}) \in K$ such that $f(A)$ and $f(B)$ 
are exactly  in the two connected components of $(V, E-E_{k({A|B})})$.   Furthermore, assume that 
each element in $K$ corresponds to an element in $N$.
Then $(G,s,f)$ is a \emph{split network} that represents $N$.
\end{definition}
\begin{exmp}\label{split graph}
Consider the bipartite graph $G$ drawn in Figure \ref{splitgraphfig}.
    Let  $s:E_G\to \{\alpha,\beta,\gamma,\delta,\epsilon,\zeta,\eta,\theta,\iota\}$ such that
    \begin{align*}
     &s(\{c,d\})=s(\{f,g\})=s(\{h,m\})=s(\{d,i\})=\alpha,\\
    &s(\{c,g\})=s(\{d,f\})=s(\{k,m\})=\beta,\\
    &s(\{g,m\})=s(\{f,h\})=s(\{i,k\})=\gamma,\\
    &s(\{a,c\})=\delta,
     s(\{b,d\})=\epsilon,
    s(\{e,d\})=\zeta,
       s(\{g,j\})=\eta,
        s(\{k,l\})=\theta,
         s(\{m,n\})=\iota.\\
    \end{align*}
    Then the pairing $(G,s)$ is a split graph. Additionally, consider the split system \[
N = \{1456|23,1234|56,1236|45\} \cup \{ i | [6] \setminus i : i \in [6]  \}.
\]
    Then if $f$ is the map:
     \begin{align*}
     &f(a)=1, f(b)=2,f(e)=3,f(l)=4,f(n)=5, f(j)=6,   
    \end{align*}
$(G,s,f)$ is a split network representing $N$.
\end{exmp}

 \begin{figure}[t]

\centering
\resizebox{150pt}{100pt}{

\tikzset{every picture/.style={line width=0.75pt}} 

\begin{tikzpicture}[x=0.75pt,y=0.75pt,yscale=-1,xscale=1]

\draw    (420,185) -- (449.33,154) ;
\draw  [fill={rgb, 255:red, 0; green, 0; blue, 0 }  ,fill opacity=1 ] (417.5,185) .. controls (417.5,183.62) and (418.62,182.5) .. (420,182.5) .. controls (421.38,182.5) and (422.5,183.62) .. (422.5,185) .. controls (422.5,186.38) and (421.38,187.5) .. (420,187.5) .. controls (418.62,187.5) and (417.5,186.38) .. (417.5,185) -- cycle ;
\draw  [fill={rgb, 255:red, 0; green, 0; blue, 0 }  ,fill opacity=1 ] (446.83,154) .. controls (446.83,152.62) and (447.95,151.5) .. (449.33,151.5) .. controls (450.71,151.5) and (451.83,152.62) .. (451.83,154) .. controls (451.83,155.38) and (450.71,156.5) .. (449.33,156.5) .. controls (447.95,156.5) and (446.83,155.38) .. (446.83,154) -- cycle ;
\draw [color={rgb, 255:red, 208; green, 2; blue, 27 }  ,draw opacity=1 ]   (500,185) -- (555,165) ;
\draw [color={rgb, 255:red, 245; green, 166; blue, 35 }  ,draw opacity=1 ]   (580,115) -- (555,165) ;
\draw [color={rgb, 255:red, 245; green, 166; blue, 35 }  ,draw opacity=1 ]   (449.33,154) -- (480,95) ;
\draw [color={rgb, 255:red, 74; green, 144; blue, 226 }  ,draw opacity=1 ]   (449.33,154) -- (455,70) ;
\draw [color={rgb, 255:red, 208; green, 2; blue, 27 }  ,draw opacity=1 ]   (480,95) -- (510,105) ;
\draw [color={rgb, 255:red, 74; green, 144; blue, 226 }  ,draw opacity=1 ]   (510,105) -- (580,115) ;
\draw    (420,45) -- (455,70) ;
\draw [color={rgb, 255:red, 74; green, 144; blue, 226 }  ,draw opacity=1 ]   (480,95) -- (515,65) ;
\draw [color={rgb, 255:red, 208; green, 2; blue, 27 }  ,draw opacity=1 ]   (580,115) -- (515,65) ;
\draw [color={rgb, 255:red, 245; green, 166; blue, 35 }  ,draw opacity=1 ]   (455,70) -- (486.07,67.41) -- (515,65) ;
\draw    (515,65) -- (540,35) ;
\draw    (580,115) -- (610,115) ;
\draw  [fill={rgb, 255:red, 0; green, 0; blue, 0 }  ,fill opacity=1 ] (552.5,165) .. controls (552.5,163.62) and (553.62,162.5) .. (555,162.5) .. controls (556.38,162.5) and (557.5,163.62) .. (557.5,165) .. controls (557.5,166.38) and (556.38,167.5) .. (555,167.5) .. controls (553.62,167.5) and (552.5,166.38) .. (552.5,165) -- cycle ;
\draw  [fill={rgb, 255:red, 0; green, 0; blue, 0 }  ,fill opacity=1 ] (497.5,185) .. controls (497.5,183.62) and (498.62,182.5) .. (500,182.5) .. controls (501.38,182.5) and (502.5,183.62) .. (502.5,185) .. controls (502.5,186.38) and (501.38,187.5) .. (500,187.5) .. controls (498.62,187.5) and (497.5,186.38) .. (497.5,185) -- cycle ;
\draw  [fill={rgb, 255:red, 0; green, 0; blue, 0 }  ,fill opacity=1 ] (477.5,95) .. controls (477.5,93.62) and (478.62,92.5) .. (480,92.5) .. controls (481.38,92.5) and (482.5,93.62) .. (482.5,95) .. controls (482.5,96.38) and (481.38,97.5) .. (480,97.5) .. controls (478.62,97.5) and (477.5,96.38) .. (477.5,95) -- cycle ;
\draw  [fill={rgb, 255:red, 0; green, 0; blue, 0 }  ,fill opacity=1 ] (452.5,70) .. controls (452.5,68.62) and (453.62,67.5) .. (455,67.5) .. controls (456.38,67.5) and (457.5,68.62) .. (457.5,70) .. controls (457.5,71.38) and (456.38,72.5) .. (455,72.5) .. controls (453.62,72.5) and (452.5,71.38) .. (452.5,70) -- cycle ;
\draw  [fill={rgb, 255:red, 0; green, 0; blue, 0 }  ,fill opacity=1 ] (577.5,115) .. controls (577.5,113.62) and (578.62,112.5) .. (580,112.5) .. controls (581.38,112.5) and (582.5,113.62) .. (582.5,115) .. controls (582.5,116.38) and (581.38,117.5) .. (580,117.5) .. controls (578.62,117.5) and (577.5,116.38) .. (577.5,115) -- cycle ;
\draw  [fill={rgb, 255:red, 0; green, 0; blue, 0 }  ,fill opacity=1 ] (507.5,105) .. controls (507.5,103.62) and (508.62,102.5) .. (510,102.5) .. controls (511.38,102.5) and (512.5,103.62) .. (512.5,105) .. controls (512.5,106.38) and (511.38,107.5) .. (510,107.5) .. controls (508.62,107.5) and (507.5,106.38) .. (507.5,105) -- cycle ;
\draw  [fill={rgb, 255:red, 0; green, 0; blue, 0 }  ,fill opacity=1 ] (417.5,45) .. controls (417.5,43.62) and (418.62,42.5) .. (420,42.5) .. controls (421.38,42.5) and (422.5,43.62) .. (422.5,45) .. controls (422.5,46.38) and (421.38,47.5) .. (420,47.5) .. controls (418.62,47.5) and (417.5,46.38) .. (417.5,45) -- cycle ;
\draw  [fill={rgb, 255:red, 0; green, 0; blue, 0 }  ,fill opacity=1 ] (605,115) .. controls (605,113.62) and (606.12,112.5) .. (607.5,112.5) .. controls (608.88,112.5) and (610,113.62) .. (610,115) .. controls (610,116.38) and (608.88,117.5) .. (607.5,117.5) .. controls (606.12,117.5) and (605,116.38) .. (605,115) -- cycle ;
\draw  [fill={rgb, 255:red, 0; green, 0; blue, 0 }  ,fill opacity=1 ] (537.5,35) .. controls (537.5,33.62) and (538.62,32.5) .. (540,32.5) .. controls (541.38,32.5) and (542.5,33.62) .. (542.5,35) .. controls (542.5,36.38) and (541.38,37.5) .. (540,37.5) .. controls (538.62,37.5) and (537.5,36.38) .. (537.5,35) -- cycle ;
\draw  [fill={rgb, 255:red, 0; green, 0; blue, 0 }  ,fill opacity=1 ] (512.5,65) .. controls (512.5,63.62) and (513.62,62.5) .. (515,62.5) .. controls (516.38,62.5) and (517.5,63.62) .. (517.5,65) .. controls (517.5,66.38) and (516.38,67.5) .. (515,67.5) .. controls (513.62,67.5) and (512.5,66.38) .. (512.5,65) -- cycle ;
\draw [color={rgb, 255:red, 74; green, 144; blue, 226 }  ,draw opacity=1 ]   (449.33,154) -- (500,185) ;
\draw    (450,200) -- (449.33,154) ;
\draw  [fill={rgb, 255:red, 0; green, 0; blue, 0 }  ,fill opacity=1 ] (447.5,200) .. controls (447.5,198.62) and (448.62,197.5) .. (450,197.5) .. controls (451.38,197.5) and (452.5,198.62) .. (452.5,200) .. controls (452.5,201.38) and (451.38,202.5) .. (450,202.5) .. controls (448.62,202.5) and (447.5,201.38) .. (447.5,200) -- cycle ;
\draw    (555,165) -- (555,205) ;
\draw  [fill={rgb, 255:red, 0; green, 0; blue, 0 }  ,fill opacity=1 ] (552.5,205) .. controls (552.5,203.62) and (553.62,202.5) .. (555,202.5) .. controls (556.38,202.5) and (557.5,203.62) .. (557.5,205) .. controls (557.5,206.38) and (556.38,207.5) .. (555,207.5) .. controls (553.62,207.5) and (552.5,206.38) .. (552.5,205) -- cycle ;

\draw (416,192) node [anchor=north west][inner sep=0.75pt]  [font=\tiny] [align=left] {b};
\draw (447.47,140) node [anchor=north west][inner sep=0.75pt]   [align=left] {$ $};
\draw (457,73) node [anchor=north west][inner sep=0.75pt]  [font=\tiny] [align=left] {c};
\draw (437.75,143.75) node [anchor=north west][inner sep=0.75pt]  [font=\tiny] [align=left] {d};
\draw (480.75,102) node [anchor=north west][inner sep=0.75pt]  [font=\tiny] [align=left] {f};
\draw (511,72) node [anchor=north west][inner sep=0.75pt]  [font=\tiny] [align=left] {g};
\draw (508.5,110.25) node [anchor=north west][inner sep=0.75pt]  [font=\tiny] [align=left] {h};
\draw (502.25,189.25) node [anchor=north west][inner sep=0.75pt]  [font=\tiny] [align=left] {i};
\draw (541,42) node [anchor=north west][inner sep=0.75pt]  [font=\tiny] [align=left] {j};
\draw (559,212) node [anchor=north west][inner sep=0.75pt]  [font=\tiny] [align=left] {l};
\draw (419,52) node [anchor=north west][inner sep=0.75pt]  [font=\tiny] [align=left] {a};
\draw (580,119.75) node [anchor=north west][inner sep=0.75pt]  [font=\tiny] [align=left] {m};
\draw (446,207) node [anchor=north west][inner sep=0.75pt]  [font=\tiny] [align=left] {e};
\draw (560,170.25) node [anchor=north west][inner sep=0.75pt]  [font=\tiny] [align=left] {k};
\draw (608,119.25) node [anchor=north west][inner sep=0.75pt]  [font=\tiny] [align=left] {n};

\end{tikzpicture}

}
\caption{An example of a split graph whose corresponding functions are color-coded and described in Example \ref{split graph}. Here $\alpha$ is blue, $\beta$ is orange, $\gamma$ is red, and all other function values are black}\label{splitgraphfig}

\end{figure}
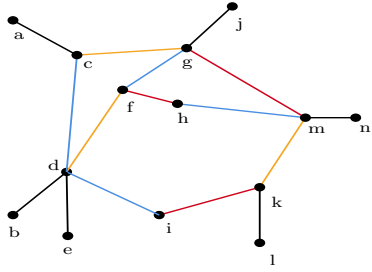

A split system is represented by a split network if the split network has some edge or set of edges that realize every split in the split system. The following definition and theorem exactly characterize when a split system is represented by a tree.

\begin{definition} \label{compatible}
A split system $N$ is \emph{pairwise compatible} if for every pair of splits $A_1| B_1$, $A_2| B_2 \in N$  
at least one of the following sets is empty:
\[
A_1 \cap A_2, \; \; A_1 \cap B_2, \; \; A_2 \cap B_1, \; \; B_1 \cap B_2.
\]
\end{definition}

\begin{definition}
    Let $(G,s, f)$ be a split network on $X$.  Let 
$\Sigma(G)$ be all of the splits of $X$ that are induced by edge classes of $G$. 
\end{definition}

\begin{theorem}{(Splits Equivalence Theorem)}\label{equiv}
Let $N$ be a split system on $X$. 
Then there exists an $X$-tree $T$ with $\Sigma(T)=N$ if and only if the splits in $N$ are pairwise compatible. 
Furthermore, the tree $T$ is uniquely determined.
\end{theorem}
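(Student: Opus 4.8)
The plan is to prove the two implications separately and then uniqueness, with the whole argument hinging on the observation that a consistently oriented pairwise compatible split system is a laminar family, and laminar families are in bijection with trees.

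For the ``only if'' direction, suppose $T$ is an $X$-tree with $\Sigma(T)=N$, so every split in $N$ is induced by some edge of $T$ (here deleting an edge class yields two components, as in Proposition \ref{2comp}). I would take two splits $A_1|B_1$ and $A_2|B_2$ arising from distinct edges $e_1,e_2$. Deleting both edges from the tree produces exactly three connected components, with leaf sets $X_1,X_2,X_3$ arranged along a path so that $e_1$ separates $X_1$ from $X_2\cup X_3$ and $e_2$ separates $X_1\cup X_2$ from $X_3$. Writing $A_1=X_1$, $B_1=X_2\cup X_3$, $A_2=X_1\cup X_2$, $B_2=X_3$, I read off $A_1\cap B_2=X_1\cap X_3=\emptyset$, which is one of the four sets in Definition \ref{compatible}; hence the splits are pairwise compatible. (The possibly empty middle set $X_2$ causes no trouble, since only $X_1\cap X_3=\emptyset$ is needed.)

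For the ``if'' direction, which is the substantive half, I would fix a base point $x_0\in X$ and orient every split $A|B\in N$ so that $x_0\in B$, recording the cluster $A$ on the side avoiding $x_0$. The key lemma is that pairwise compatibility forces these clusters to be laminar: given clusters $A_1,A_2$, since $x_0\in B_1\cap B_2$ that set is nonempty, so one of the remaining three intersections of Definition \ref{compatible} vanishes, giving $A_1\cap A_2=\emptyset$, or $A_1\subseteq A_2$, or $A_2\subseteq A_1$. I would then invoke the standard correspondence between laminar families on $X\setminus\{x_0\}$ and rooted trees with leaf set $X$: take the internal vertices to be the clusters ordered by inclusion (the Hasse diagram), attach each leaf to the smallest cluster containing it, root at $x_0$, and finally read the result as an unrooted $X$-tree after suppressing degree-two vertices.

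The remaining work is to verify $\Sigma(T)=N$ exactly, that every cluster recovers its split and that no spurious splits appear, together with the bookkeeping for trivial splits and the check that the constructed object is a genuine $X$-tree; this translation step is where I expect the main friction. For uniqueness, I would observe that any $X$-tree realizing $N$ produces, via the same orientation at $x_0$, the identical laminar family of clusters, and since a hierarchy determines its rooted tree up to isomorphism with $x_0$ a fixed leaf, the realizing tree is unique. The hardest single point is establishing the laminar-to-tree correspondence cleanly and confirming that the edge set of the built tree is in exact bijection with $N$; once that is in place, everything else is a direct reading of the compatibility inequalities.
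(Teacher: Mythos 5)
Your proposal is correct in outline, but note that the paper itself gives no proof of Theorem \ref{equiv}: it defers entirely to \cite{phy}, where the standard argument for the converse runs by induction on the number of splits (remove a split, build the tree for the remainder, then show the subtree spanned by one side of the removed split can be split by inserting a new edge). Your route is genuinely different: fixing a base point $x_0$, orienting every split so that its cluster avoids $x_0$, and observing that pairwise compatibility forces laminarity (since $x_0\in B_1\cap B_2$ rules out $B_1\cap B_2=\emptyset$, one of $A_1\cap A_2$, $A_1\cap B_2$, $A_2\cap B_1$ must vanish, giving disjointness or nesting) is exactly Buneman's hierarchy-style argument. What it buys is conceptual transparency and a cleaner uniqueness proof: once you know a hierarchy determines its rooted tree, uniqueness of $T$ is immediate, whereas the inductive proof must track uniqueness through the edge-insertion step. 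What it costs is that the laminar-to-tree correspondence you invoke carries essentially the same content as the uniqueness claim, so you must either prove it or cite it honestly as the load-bearing external fact. Your ``only if'' direction is fine, including the remark that the middle leaf set $X_2$ may be empty.

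One concrete point needs repair rather than just acknowledgment. As literally described, your construction ``attach each leaf to the smallest cluster containing it'' builds a tree in which every element of $X$ sits on a pendant edge, and every pendant edge induces a trivial split; so if $N$ does not contain the trivial split $\{x\}\,|\,X\setminus\{x\}$ for some $x$, your tree satisfies $\Sigma(T)\supsetneq N$ and the verification $\Sigma(T)=N$ fails. The fix, consistent with the $X$-tree convention in \cite{phy}, is to place the label $x$ directly on the vertex of its minimal containing cluster (an internal vertex) whenever $\{x\}$ is not itself a cluster, attaching a pendant leaf only when the trivial split is present in $N$; similarly, $x_0$ labels the root vertex rather than hanging below it unless its trivial split is in $N$. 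With that amendment, and the observation that suppressing the resulting degree-two vertices merges pairs of edges inducing the same split (so no splits are lost or duplicated), the edge set of the constructed tree is in exact bijection with $N$ and your argument goes through.
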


For a detailed proof of Theorem \ref{equiv} see \cite{phy}. 
In general  split systems need not be pairwise compatible. 
Thus, general split systems extend trees to the case of non-compatible splits.
In a general split network,  each split is represented by a set of parallel 
edges that disconnect the graph according to that
partition.

\begin{exmp}\label{treeex}
Consider the split system $N$ on 6 leaves  
\[
N = \{12|3456,1265|34,1234|56\} \cup \{ i | [6] \setminus i : i \in [6]  \}.
\]
This set of  splits is pairwise compatible. 
Thus by the Split Equivalence Theorem, there exists a unique tree with $\Sigma(T)=N$, shown in Figure \ref{uniquetree},  
\end{exmp}

 \begin{figure}[t]

\centering

\tikzset{every picture/.style={line width=0.75pt}} 

\begin{tikzpicture}[x=0.75pt,y=0.75pt,yscale=-1,xscale=1]

\draw    (235.18,101.27) -- (202.15,111.4) ;
\draw    (202.89,158.14) -- (231.22,181.31) ;
\draw    (202.89,158.14) -- (202.89,192.89) ;
\draw  [fill={rgb, 255:red, 18; green, 17; blue, 17 }  ,fill opacity=1 ] (200.53,192.89) .. controls (200.53,191.29) and (201.59,189.99) .. (202.89,189.99) .. controls (204.19,189.99) and (205.25,191.29) .. (205.25,192.89) .. controls (205.25,194.49) and (204.19,195.78) .. (202.89,195.78) .. controls (201.59,195.78) and (200.53,194.49) .. (200.53,192.89) -- cycle ;
\draw  [fill={rgb, 255:red, 18; green, 17; blue, 17 }  ,fill opacity=1 ] (228.86,181.31) .. controls (228.86,179.71) and (229.91,178.41) .. (231.22,178.41) .. controls (232.52,178.41) and (233.58,179.71) .. (233.58,181.31) .. controls (233.58,182.91) and (232.52,184.2) .. (231.22,184.2) .. controls (229.91,184.2) and (228.86,182.91) .. (228.86,181.31) -- cycle ;
\draw  [fill={rgb, 255:red, 18; green, 17; blue, 17 }  ,fill opacity=1 ] (232.95,100.3) .. controls (233.39,98.79) and (234.74,98.01) .. (235.97,98.55) .. controls (237.2,99.08) and (237.84,100.74) .. (237.4,102.25) .. controls (236.96,103.75) and (235.61,104.54) .. (234.38,104) .. controls (233.15,103.46) and (232.51,101.81) .. (232.95,100.3) -- cycle ;
\draw    (153.61,133.04) -- (125.63,156.1) ;
\draw  [fill={rgb, 255:red, 18; green, 17; blue, 17 }  ,fill opacity=1 ] (127.49,155.81) .. controls (127.49,154.22) and (126.45,152.93) .. (125.16,152.93) .. controls (123.88,152.93) and (122.83,154.22) .. (122.83,155.81) .. controls (122.83,157.4) and (123.88,158.69) .. (125.16,158.69) .. controls (126.45,158.69) and (127.49,157.4) .. (127.49,155.81) -- cycle ;
\draw    (153.61,133.04) -- (127.47,112.45) ;
\draw  [fill={rgb, 255:red, 18; green, 17; blue, 17 }  ,fill opacity=1 ] (129.64,112.45) .. controls (129.64,113.87) and (128.67,115.02) .. (127.47,115.02) .. controls (126.26,115.02) and (125.29,113.87) .. (125.29,112.45) .. controls (125.29,111.03) and (126.26,109.87) .. (127.47,109.87) .. controls (128.67,109.87) and (129.64,111.03) .. (129.64,112.45) -- cycle ;
\draw    (202.15,111.4) -- (211.67,78.68) ;
\draw  [fill={rgb, 255:red, 18; green, 17; blue, 17 }  ,fill opacity=1 ] (213.73,79.58) .. controls (213.34,80.92) and (212.1,81.6) .. (210.97,81.1) .. controls (209.84,80.61) and (209.23,79.12) .. (209.62,77.78) .. controls (210.01,76.44) and (211.25,75.76) .. (212.38,76.26) .. controls (213.51,76.75) and (214.12,78.24) .. (213.73,79.58) -- cycle ;
\draw [color={rgb, 255:red, 208; green, 2; blue, 27 }  ,draw opacity=1 ][fill={rgb, 255:red, 208; green, 2; blue, 27 }  ,fill opacity=1 ]   (202.15,111.4) -- (184.01,134.4) ;
\draw [color={rgb, 255:red, 80; green, 227; blue, 194 }  ,draw opacity=1 ]   (184.01,134.4) -- (202.89,158.14) ;
\draw [color={rgb, 255:red, 248; green, 231; blue, 28 }  ,draw opacity=1 ]   (153.61,133.04) -- (184.01,134.4) ;

\draw (211.57,66.36) node [anchor=north west][inner sep=0.75pt]  [font=\tiny,rotate=-1.32] [align=left] {1};
\draw (240.93,90.28) node [anchor=north west][inner sep=0.75pt]  [font=\tiny,rotate=-1.17] [align=left] {2};
\draw (235.19,185.26) node [anchor=north west][inner sep=0.75pt]  [font=\tiny] [align=left] {3};
\draw (199.54,200.6) node [anchor=north west][inner sep=0.75pt]  [font=\tiny] [align=left] {4};
\draw (116.75,160.83) node [anchor=north west][inner sep=0.75pt]  [font=\tiny] [align=left] {5};
\draw (119.11,103.4) node [anchor=north west][inner sep=0.75pt]  [font=\tiny] [align=left] {6};

\end{tikzpicture}

\caption{The unique tree representing the split system $N= \{12|3456,1265|34,1234|56\} \cup \{ i | [6] \setminus i : i \in [6]  \}$ 
in Example \ref{treeex}, which must exist by Theorem \ref{equiv}. }\label{uniquetree}

\end{figure}

A consequence of Theorem \ref{equiv} is that a split system $N$ will be pairwise 
compatible if and only if there is some tree $T$ such that $(T,s,f)$ represents $N$.

A generic split system will not necessarily have a split network that can be drawn as a planar graph.
However, adding the following circular condition to the split system guarantees that the split network graph is planar.
Aside from the planar nature of circular split systems, they also have the advantage of being easy to represent
and have nice mathematical properties \cite{BanDress,Bryant2004, main, LevyPachter2011}.

\begin{definition}
A split system $N$ with leaf labels $X$ is \textit{circular} with respect to some cyclic ordering $ (x_1,\dots, x_n)$
of $X$ if every split of $S$ is of the form 
\[
x_{i+1}, \dots , x_j | x_{j+1}, \dots , x_i
\] 
for some $i$ and $j$, where the indices are considered cyclically modulo $n$ (e.g. $x_{n+1} = x_1$). 
\end{definition}

The following algorithm constructs a split network that represents a circular split system $N$.
\begin{algorithm}{Circular Network Algorithm }\label{netalg}\citep{netalg}

Let $N$ be a split system with $n$ leaves and the corresponding trivial split for each leaf, i.e.  $i | [n] \setminus i : i \in [n]  \}\in N$.
\begin{enumerate}
\item Construct a star graph with $n$ leaves labeled $1,\dots, n$.
\item Let $i\dots j-1|j \dots n \dots i-1\in N$. Find a path $p$ from $i$ to $j-1$ that uses the least amount of edges. Let the size of the number of internal edges in $p$ be $g$.
\item Let $p=i e_0 u_1 e_1 \dots u_g e_g(j-1)$ where $e_k$ and $u_k$ for $k\in[g]$ are internal edges and vertices along $p$ respectively. Create a copy of $e_0 u_1 e_1 \dots u_g e_g$ which we will call $e_0' u_1' e_1' \dots u_g' e_g'$.
\item Assume that the edges of $u_k$ are $\{e_{k-1}, l_1, \dots, l_h, e_k, r_1, \dots, r_p\}$ where $ l_1 \dots l_h$ are edges that move closer to the leaves $i,\dots ,j-1$ and $r_1 \dots r_p$ are the edges that move closer to the leaves $j, \dots, n, \dots, i-1$.  Let $f_{k}$ be a new edge representing the split $i\dots j-1|j \dots n \dots i-1$. Then change the edges of $u_k$ to $\{e_{k-1}, l_1, \dots, l_h, e_k,f_k\}$ and the edges of $u_k'$ to \begin{equation}
\begin{cases} 
 (f_k, e^\prime_i, r_1, r_2,\ldots, r_p)  & \text{ if }k=1 \\ (e^\prime_{k-1}, f_k, e^\prime_k, r_1, r_2,\ldots, r_p) & \text{ if } 1<k<g \\
 (e^\prime_{k-1}, f_k, r_1, r_2,\ldots, r_p) & \text{ if }k=g\end{cases}.
\end{equation}
\item Repeat Steps $2$ through $4$ for every split in $N$.
\end{enumerate}
\end{algorithm}

\begin{figure}[t]
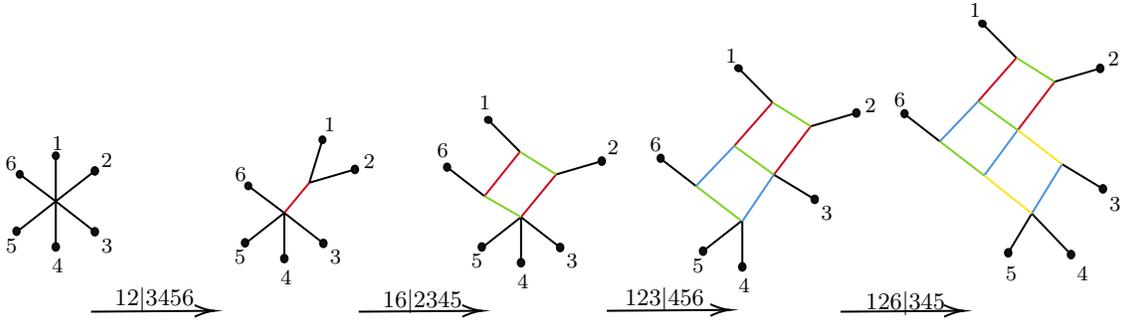


\centering

\tikzset{every picture/.style={line width=0.75pt}} 


\caption{A visualization of preforming Algorithm \ref{netalg} on the split system $N= \{12|3456,16|2345,123|456,126|345\}$ from Example \ref{exnet}.}\label{tree}

\end{figure}

Constructing a split network from a split system can also be viewed as starting with a star graph and “pulling” the two sides of each additional split in a different direction, splitting in half any edges as necessary.
\begin{exmp}\label{exnet}
Consider the split system $N$ on 6 leaves with splits 
\[
\{12|3456,16|2345,123|456,126|345\}  \cup \{ i | [6] \setminus i : i \in [6]  \} .
\]
These splits are not pairwise compatible. This can be seen with the splits $A_1|B_1=12|3456$ and $A_2|B_2=16|2345$ and $A_1\cap A_2=1$, $A_1\cap B_2=2$, $B_1\cap A_2=6$, $B_1\cap B_2=34$, none of which are empty.  See Figure \ref{tree} for a visualization of a split network representing $N$ using Algorithm \ref{netalg}.
\end{exmp}

In addition, see Figure \ref{treepic} for a visualization of a tree representing $N$ in Example \ref{treeex} using Algorithm \ref{netalg}.

\begin{figure}[t]
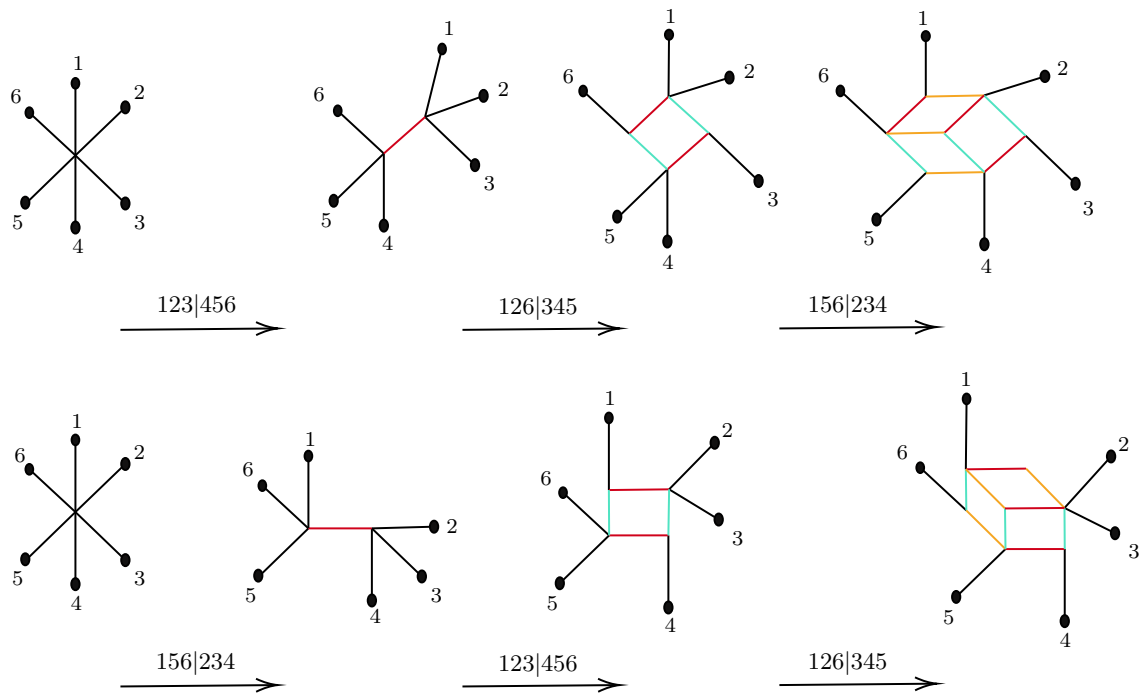


\centering

\tikzset{every picture/.style={line width=0.75pt}} 



\caption{Applying Algorithm \ref{netalg} to the split system $N=\{123|456,126|345,156|234\}$ from Example \ref{orderex}    
by applying the splits in two different orders, resulting in different graphs}\label{order}
\end{figure}

 For any particular drawing of a split network, the order in which the splits are drawn changes the resulting graph which can be seen in the following example.
\begin{exmp}\label{orderex}
Consider the split system $N$ on 6 leaves with splits 
\[ 
\{123|456,126|345,156|234\} \cup \{ i | [6] \setminus i : i \in [6]  \}.
\]
If Algorithm \ref{netalg} is performed by applying the nontrivial splits in the order \\
$(123|456,126|345,156|234) $, 
it will result in a different graph than if they are applied in the order $(156|234,123|456,126|345)$. This is visualized in Figure \ref{order}.
\end{exmp}

Thus, unlike the tree case, there is a choice to be made for the order of the splits in a circular split network to apply Algorithm \ref{netalg}. Rather than fixing one particular ordering, the choice made in this paper was to view circular split networks as simply a set of splits, visualizing them using their dual polygon representation. This choice maintains an independence of the order of the circular split networks.

\begin{definition}
Let $N$ be a circular split system with leaves labeled  $0,1,\dots, n$. 
The \textit{dual polygon representation} of $N$ is constructed in the following way: \\

Take an $n+1$-gon and label the edges sequentially clockwise with $0,1,\dots, n$. 
Label the vertices by the edge that is adjacent to it, clockwise.
Let $i\dots j-1|j \dots 0 n \dots i-1$ where $i<j\in [n]$, be a non-trivial split in $N$. 
Then, $i\dots j-1|j \dots 0 n \dots i-1$ is represented by the  diagonal of the $n+1$-gon that connects 
the vertex  $i$  to the vertex $j$. 
\end{definition}

With the labeling above, the edges of the $n+1$-gon are labeled by the numbers $0,1, \ldots, n$ 
in such a way that the diagonal corresponding to 
the nontrivial split 
$i\dots j-1|j \dots 0 n \dots i-1$
separates the edges into the two sets $\{ i, \dots,  j-1\} $ and $\{ j \dots 0 n \dots i-1 \}$.  
The trivial splits of $N$ correspond to the sides of the $n+1$-gon.

We now introduce the notion of rooted graphs  because the main focus in the rest of this paper will be on rooted split systems,
as that is key for the equidistant property.

 \begin{definition}
A graph $G$ is \emph{rooted} if one of its vertices has been specially designated as a root.
\end{definition}

In the context of biology, the root is the most recent common ancestor of all of the species in the network. In this paper, for any rooted split system, the root will be labeled $0$ and will be a 
leaf in the split network representing the rooted split system. 
The $n$ other leaves will have the labels $1$ through $n$. 
The root will be at the top of any picture, with its one leaf edge suppressed, 
and all of the rest of the leaves will be at the bottom. 
\begin{figure}[t]

\centering

\tikzset{every picture/.style={line width=0.75pt}} 

\begin{tikzpicture}[x=0.75pt,y=0.75pt,yscale=-1,xscale=1]

\draw   (137.42,153.63) -- (111.32,199.03) -- (59.12,199.03) -- (33.02,153.63) -- (59.12,108.23) -- (111.32,108.23) -- cycle ;
\draw [color={rgb, 255:red, 208; green, 2; blue, 27 }  ,draw opacity=1 ]   (59.12,108.23) -- (137.42,153.63) ;
\draw [color={rgb, 255:red, 126; green, 211; blue, 33 }  ,draw opacity=1 ]   (111.32,108.23) -- (111.32,199.03) ;
\draw [color={rgb, 255:red, 248; green, 231; blue, 28 }  ,draw opacity=1 ]   (59.12,108.23) -- (59.12,199.03) ;
\draw [color={rgb, 255:red, 74; green, 144; blue, 226 }  ,draw opacity=1 ]   (137.42,153.63) -- (59.12,199.03) ;
\draw    (152.34,168.61) -- (210,168.61) ;
\draw [shift={(212,168.61)}, rotate = 180] [color={rgb, 255:red, 0; green, 0; blue, 0 }  ][line width=0.75]    (10.93,-3.29) .. controls (6.95,-1.4) and (3.31,-0.3) .. (0,0) .. controls (3.31,0.3) and (6.95,1.4) .. (10.93,3.29)   ;
\draw [color={rgb, 255:red, 208; green, 2; blue, 27 }  ,draw opacity=1 ]   (264.2,145.9) -- (264.2,108.45) ;
\draw [color={rgb, 255:red, 208; green, 2; blue, 27 }  ,draw opacity=1 ]   (294.03,145.9) -- (294.03,108.45) ;
\draw [color={rgb, 255:red, 74; green, 144; blue, 226 }  ,draw opacity=1 ]   (279.12,175.85) -- (264.2,145.9) ;
\draw [color={rgb, 255:red, 74; green, 144; blue, 226 }  ,draw opacity=1 ]   (308.95,175.85) -- (294.03,145.9) ;
\draw [color={rgb, 255:red, 126; green, 211; blue, 33 }  ,draw opacity=1 ]   (264.2,108.45) -- (294.03,108.45) ;
\draw [color={rgb, 255:red, 126; green, 211; blue, 33 }  ,draw opacity=1 ]   (264.2,145.9) -- (294.03,145.9) ;
\draw [color={rgb, 255:red, 126; green, 211; blue, 33 }  ,draw opacity=1 ]   (279.12,175.85) -- (308.95,175.85) ;
\draw    (323.86,198.32) -- (308.95,175.85) ;
\draw    (308.95,85.98) -- (294.03,108.45) ;
\draw    (264.2,108.45) -- (249.29,85.98) ;
\draw    (279.12,198.32) -- (279.12,175.85) ;
\draw [color={rgb, 255:red, 248; green, 231; blue, 28 }  ,draw opacity=1 ]   (241.83,160.87) -- (264.2,145.9) ;
\draw    (241.83,160.87) -- (219.46,145.9) ;
\draw    (219.46,175.85) -- (241.83,160.87) ;
\draw  [fill={rgb, 255:red, 0; green, 0; blue, 0 }  ,fill opacity=1 ] (251.15,85.98) .. controls (251.15,84.95) and (250.32,84.11) .. (249.29,84.11) .. controls (248.26,84.11) and (247.42,84.95) .. (247.42,85.98) .. controls (247.42,87.02) and (248.26,87.86) .. (249.29,87.86) .. controls (250.32,87.86) and (251.15,87.02) .. (251.15,85.98) -- cycle ;
\draw  [fill={rgb, 255:red, 0; green, 0; blue, 0 }  ,fill opacity=1 ] (310.81,85.98) .. controls (310.81,84.95) and (309.98,84.11) .. (308.95,84.11) .. controls (307.92,84.11) and (307.08,84.95) .. (307.08,85.98) .. controls (307.08,87.02) and (307.92,87.86) .. (308.95,87.86) .. controls (309.98,87.86) and (310.81,87.02) .. (310.81,85.98) -- cycle ;
\draw  [fill={rgb, 255:red, 0; green, 0; blue, 0 }  ,fill opacity=1 ] (221.32,145.9) .. controls (221.32,144.86) and (220.49,144.02) .. (219.46,144.02) .. controls (218.43,144.02) and (217.59,144.86) .. (217.59,145.9) .. controls (217.59,146.93) and (218.43,147.77) .. (219.46,147.77) .. controls (220.49,147.77) and (221.32,146.93) .. (221.32,145.9) -- cycle ;
\draw  [fill={rgb, 255:red, 0; green, 0; blue, 0 }  ,fill opacity=1 ] (221.32,175.85) .. controls (221.32,174.82) and (220.49,173.98) .. (219.46,173.98) .. controls (218.43,173.98) and (217.59,174.82) .. (217.59,175.85) .. controls (217.59,176.89) and (218.43,177.72) .. (219.46,177.72) .. controls (220.49,177.72) and (221.32,176.89) .. (221.32,175.85) -- cycle ;
\draw  [fill={rgb, 255:red, 0; green, 0; blue, 0 }  ,fill opacity=1 ] (280.98,198.32) .. controls (280.98,197.28) and (280.15,196.45) .. (279.12,196.45) .. controls (278.09,196.45) and (277.25,197.28) .. (277.25,198.32) .. controls (277.25,199.35) and (278.09,200.19) .. (279.12,200.19) .. controls (280.15,200.19) and (280.98,199.35) .. (280.98,198.32) -- cycle ;
\draw  [fill={rgb, 255:red, 0; green, 0; blue, 0 }  ,fill opacity=1 ] (325.73,198.32) .. controls (325.73,197.28) and (324.89,196.45) .. (323.86,196.45) .. controls (322.83,196.45) and (322,197.28) .. (322,198.32) .. controls (322,199.35) and (322.83,200.19) .. (323.86,200.19) .. controls (324.89,200.19) and (325.73,199.35) .. (325.73,198.32) -- cycle ;
\draw    (323.86,168.36) -- (381.53,168.36) ;
\draw [shift={(383.53,168.36)}, rotate = 180] [color={rgb, 255:red, 0; green, 0; blue, 0 }  ][line width=0.75]    (10.93,-3.29) .. controls (6.95,-1.4) and (3.31,-0.3) .. (0,0) .. controls (3.31,0.3) and (6.95,1.4) .. (10.93,3.29)   ;
\draw [color={rgb, 255:red, 126; green, 211; blue, 33 }  ,draw opacity=1 ]   (531.67,70) -- (561.67,90) ;
\draw [color={rgb, 255:red, 126; green, 211; blue, 33 }  ,draw opacity=1 ]   (501.67,90) -- (531.67,110) ;
\draw    (561.67,90) -- (611.67,150) ;
\draw [color={rgb, 255:red, 208; green, 2; blue, 27 }  ,draw opacity=1 ]   (561.67,90) -- (531.67,110) ;
\draw [color={rgb, 255:red, 208; green, 2; blue, 27 }  ,draw opacity=1 ]   (501.67,90) -- (531.67,70) ;
\draw [color={rgb, 255:red, 74; green, 144; blue, 226 }  ,draw opacity=1 ]   (531.67,110) -- (510.53,130.33) ;
\draw [color={rgb, 255:red, 74; green, 144; blue, 226 }  ,draw opacity=1 ]   (480,110) -- (501.67,90) ;
\draw [color={rgb, 255:red, 126; green, 211; blue, 33 }  ,draw opacity=1 ]   (480,110) -- (510,130) ;
\draw    (510,130) -- (510,150) ;
\draw    (480,110) -- (480,150) ;
\draw [color={rgb, 255:red, 248; green, 231; blue, 28 }  ,draw opacity=1 ]   (430,130) -- (500,90) ;
\draw    (431.67,130) -- (451.67,150) ;
\draw    (401.67,150) -- (431.67,130) ;
\draw  [fill={rgb, 255:red, 0; green, 0; blue, 0 }  ,fill opacity=1 ] (403.53,150) .. controls (403.53,148.97) and (402.7,148.13) .. (401.67,148.13) .. controls (400.64,148.13) and (399.8,148.97) .. (399.8,150) .. controls (399.8,151.03) and (400.64,151.87) .. (401.67,151.87) .. controls (402.7,151.87) and (403.53,151.03) .. (403.53,150) -- cycle ;
\draw  [fill={rgb, 255:red, 0; green, 0; blue, 0 }  ,fill opacity=1 ] (453.53,150) .. controls (453.53,148.97) and (452.7,148.13) .. (451.67,148.13) .. controls (450.64,148.13) and (449.8,148.97) .. (449.8,150) .. controls (449.8,151.03) and (450.64,151.87) .. (451.67,151.87) .. controls (452.7,151.87) and (453.53,151.03) .. (453.53,150) -- cycle ;
\draw  [fill={rgb, 255:red, 0; green, 0; blue, 0 }  ,fill opacity=1 ] (481.86,150) .. controls (481.86,148.97) and (481.03,148.13) .. (480,148.13) .. controls (478.97,148.13) and (478.14,148.97) .. (478.14,150) .. controls (478.14,151.03) and (478.97,151.87) .. (480,151.87) .. controls (481.03,151.87) and (481.86,151.03) .. (481.86,150) -- cycle ;
\draw  [fill={rgb, 255:red, 0; green, 0; blue, 0 }  ,fill opacity=1 ] (511.86,150) .. controls (511.86,148.97) and (511.03,148.13) .. (510,148.13) .. controls (508.97,148.13) and (508.14,148.97) .. (508.14,150) .. controls (508.14,151.03) and (508.97,151.87) .. (510,151.87) .. controls (511.03,151.87) and (511.86,151.03) .. (511.86,150) -- cycle ;
\draw  [fill={rgb, 255:red, 0; green, 0; blue, 0 }  ,fill opacity=1 ] (613.53,150) .. controls (613.53,148.97) and (612.7,148.13) .. (611.67,148.13) .. controls (610.64,148.13) and (609.8,148.97) .. (609.8,150) .. controls (609.8,151.03) and (610.64,151.87) .. (611.67,151.87) .. controls (612.7,151.87) and (613.53,151.03) .. (613.53,150) -- cycle ;

\draw (77.24,84.25) node [anchor=north west][inner sep=0.75pt]   [align=left] {0};
\draw (76.94,204.01) node [anchor=north west][inner sep=0.75pt]   [align=left] {3};
\draw (130.93,174.36) node [anchor=north west][inner sep=0.75pt]   [align=left] {2};
\draw (128.84,109.95) node [anchor=north west][inner sep=0.75pt]   [align=left] {1};
\draw (32.19,113.55) node [anchor=north west][inner sep=0.75pt]   [align=left] {5};
\draw (31,177.95) node [anchor=north west][inner sep=0.75pt]   [align=left] {4};
\draw (243.79,72) node [anchor=north west][inner sep=0.75pt]  [font=\tiny] [align=left] {0};
\draw (304.94,72.75) node [anchor=north west][inner sep=0.75pt]  [font=\tiny] [align=left] {1};
\draw (324.08,199.81) node [anchor=north west][inner sep=0.75pt]  [font=\tiny] [align=left] {2};
\draw (276.11,202.31) node [anchor=north west][inner sep=0.75pt]  [font=\tiny] [align=left] {3};
\draw (216.45,180.09) node [anchor=north west][inner sep=0.75pt]  [font=\tiny] [align=left] {4};
\draw (213.71,131.91) node [anchor=north west][inner sep=0.75pt]  [font=\tiny] [align=left] {5};
\draw (528.33,59) node [anchor=north west][inner sep=0.75pt]  [font=\tiny] [align=left] {0};
\draw (611.28,156.42) node [anchor=north west][inner sep=0.75pt]  [font=\tiny] [align=left] {1};
\draw (504.33,157) node [anchor=north west][inner sep=0.75pt]  [font=\tiny] [align=left] {2};
\draw (474.28,156.42) node [anchor=north west][inner sep=0.75pt]  [font=\tiny] [align=left] {3};
\draw (448.28,156.08) node [anchor=north west][inner sep=0.75pt]  [font=\tiny] [align=left] {4};
\draw (398.61,156.42) node [anchor=north west][inner sep=0.75pt]  [font=\tiny] [align=left] {5};

\end{tikzpicture}

\caption{The Dual Polygon Representation of the Split System $N=\{01|2345,12|0345,0145|23,0123|45 \}$ from Example \ref{dualpolygonex}. Then Algorithm \ref{netalg} is applied to create a split network representing $N$. Lastly, the split network is drawn rooted at $0$.}\label{dualpolyfig}
\end{figure}
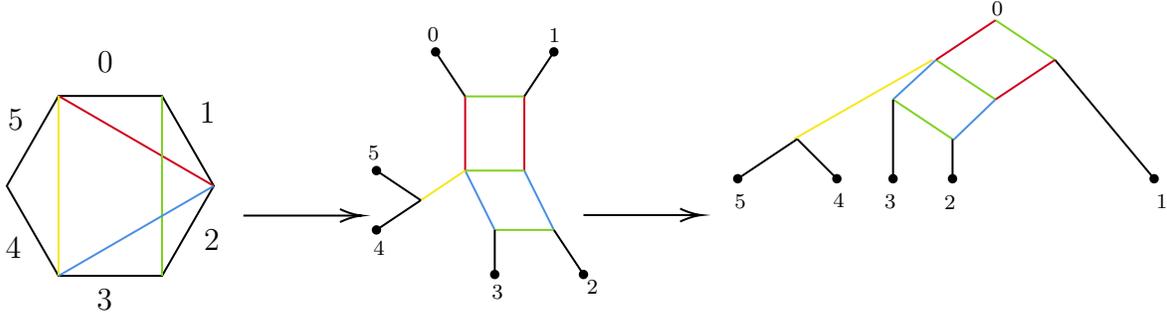

\begin{exmp} \label{dualpolygonex}
Consider the split system $N$ on 5 leaves and one root $0$ with the following splits:
\[ 
N=\{01|2345,12|0345,0145|23,0123|45 \} \cup \{ i | \{0,1,2, 3, 5\} \setminus i : i \in [5]  \}.
\]
The dual polygon representation for this split system is a hexagon with the edges 
labeled $0$ through $5$, the vertices labeled by the edge to their right, 
and the  diagonals connecting the following vertices: $\{0 - 2, 1- 3, 2- 4, 4- 0 \}$. 
See Figure \ref{dualpolyfig} for a visualization of this dual polygon representation. 
Additionally, in Figure \ref{dualpolyfig} there is an application of Algorithm 
\ref{netalg} to $N$ and a rooting of the resulting split network.
\end{exmp}

A dual polygon representation corresponds to a tree if and only if none of the diagonals intersect each other
in their interiors. 
A representational split network can be constructed from the dual polygon
representation by an application of the circular network algorithm to the underlying split system. 
If the dual polygon representation was rooted, to make the  split network rooted, 
put the leaf labeled $0$ at the top and draw the network descending from that vertex. 

In addition to the root being the most recent common ancestor of every vertex in a rooted network,  rooted split networks have a notion of a poset on a the vertices. The join of two species is the most recent common ancestor of those species, making this poset of biological interest.

\begin{definition}
 Let  $G$ be the graph for a split network $(G,s,f)$ that represents $N$, a rooted split system. 
 The vertices of $G$ form a \emph{poset}. The root is the maximal element. Let $P_x$ be the set of paths of minimal length to the root from the vertex $x$. A vertex $y$ is above $x$ if any of the paths in $P_x$ pass through $y$. The join of two leaves, $i$ and $j$ will be referred to as the most recent common ancestor of $i$ and $j$ or $mcra(i,j)$.
\end{definition}

\begin{exmp}\label{posetex}
    Consider the split system $N$ on 4 leaves and one root $0$ with splits 
    \[ 
N=\{01|234,12|034,014|23,04|123,012|34 \} \cup \{ i | \{0,1, 2, 3, 4\}  \setminus i : i \in [4]  \}.
\]
The Hasse diagram for the poset that the split network representing $N$ forms, can be seen in Figure \ref{Hasse}.
\end{exmp}
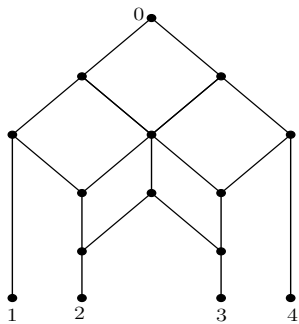
\begin{figure}[t]

\centering

\resizebox{120pt}{120pt}{

\tikzset{every picture/.style={line width=0.75pt}} 

\begin{tikzpicture}[x=0.75pt,y=0.75pt,yscale=-1,xscale=1]

\draw    (300,60) -- (350,10) ;
\draw    (350,10) -- (400,60) ;
\draw    (350,110) -- (300,60) ;
\draw    (350,110) -- (400,60) ;
\draw    (250,110) -- (300,60) ;
\draw    (300,60) -- (350,110) ;
\draw    (300,160) -- (250,110) ;
\draw    (300,160) -- (350,110) ;
\draw    (350,110) -- (400,60) ;
\draw    (400,60) -- (450,110) ;
\draw    (400,160) -- (350,110) ;
\draw    (400,160) -- (450,110) ;
\draw    (350,160) -- (350,110) ;
\draw    (300,210) -- (300,160) ;
\draw    (400,210) -- (400,160) ;
\draw    (400,210) -- (350,160) ;
\draw    (300,210) -- (350,160) ;
\draw    (250,250) -- (250,110) ;
\draw    (300,250) -- (300,210) ;
\draw    (400,250) -- (400,210) ;
\draw    (450,250) -- (450,110) ;
\draw  [fill={rgb, 255:red, 0; green, 0; blue, 0 }  ,fill opacity=1 ] (247,110) .. controls (247,108.34) and (248.34,107) .. (250,107) .. controls (251.66,107) and (253,108.34) .. (253,110) .. controls (253,111.66) and (251.66,113) .. (250,113) .. controls (248.34,113) and (247,111.66) .. (247,110) -- cycle ;
\draw  [fill={rgb, 255:red, 0; green, 0; blue, 0 }  ,fill opacity=1 ] (297,60) .. controls (297,58.34) and (298.34,57) .. (300,57) .. controls (301.66,57) and (303,58.34) .. (303,60) .. controls (303,61.66) and (301.66,63) .. (300,63) .. controls (298.34,63) and (297,61.66) .. (297,60) -- cycle ;
\draw  [fill={rgb, 255:red, 0; green, 0; blue, 0 }  ,fill opacity=1 ] (347,10) .. controls (347,8.34) and (348.34,7) .. (350,7) .. controls (351.66,7) and (353,8.34) .. (353,10) .. controls (353,11.66) and (351.66,13) .. (350,13) .. controls (348.34,13) and (347,11.66) .. (347,10) -- cycle ;
\draw  [fill={rgb, 255:red, 0; green, 0; blue, 0 }  ,fill opacity=1 ] (347,110) .. controls (347,108.34) and (348.34,107) .. (350,107) .. controls (351.66,107) and (353,108.34) .. (353,110) .. controls (353,111.66) and (351.66,113) .. (350,113) .. controls (348.34,113) and (347,111.66) .. (347,110) -- cycle ;
\draw  [fill={rgb, 255:red, 0; green, 0; blue, 0 }  ,fill opacity=1 ] (397,60) .. controls (397,58.34) and (398.34,57) .. (400,57) .. controls (401.66,57) and (403,58.34) .. (403,60) .. controls (403,61.66) and (401.66,63) .. (400,63) .. controls (398.34,63) and (397,61.66) .. (397,60) -- cycle ;
\draw  [fill={rgb, 255:red, 0; green, 0; blue, 0 }  ,fill opacity=1 ] (447,110) .. controls (447,108.34) and (448.34,107) .. (450,107) .. controls (451.66,107) and (453,108.34) .. (453,110) .. controls (453,111.66) and (451.66,113) .. (450,113) .. controls (448.34,113) and (447,111.66) .. (447,110) -- cycle ;
\draw  [fill={rgb, 255:red, 0; green, 0; blue, 0 }  ,fill opacity=1 ] (297,160) .. controls (297,158.34) and (298.34,157) .. (300,157) .. controls (301.66,157) and (303,158.34) .. (303,160) .. controls (303,161.66) and (301.66,163) .. (300,163) .. controls (298.34,163) and (297,161.66) .. (297,160) -- cycle ;
\draw  [fill={rgb, 255:red, 0; green, 0; blue, 0 }  ,fill opacity=1 ] (397,160) .. controls (397,158.34) and (398.34,157) .. (400,157) .. controls (401.66,157) and (403,158.34) .. (403,160) .. controls (403,161.66) and (401.66,163) .. (400,163) .. controls (398.34,163) and (397,161.66) .. (397,160) -- cycle ;
\draw  [fill={rgb, 255:red, 0; green, 0; blue, 0 }  ,fill opacity=1 ] (347,160) .. controls (347,158.34) and (348.34,157) .. (350,157) .. controls (351.66,157) and (353,158.34) .. (353,160) .. controls (353,161.66) and (351.66,163) .. (350,163) .. controls (348.34,163) and (347,161.66) .. (347,160) -- cycle ;
\draw  [fill={rgb, 255:red, 0; green, 0; blue, 0 }  ,fill opacity=1 ] (247,250) .. controls (247,248.34) and (248.34,247) .. (250,247) .. controls (251.66,247) and (253,248.34) .. (253,250) .. controls (253,251.66) and (251.66,253) .. (250,253) .. controls (248.34,253) and (247,251.66) .. (247,250) -- cycle ;
\draw  [fill={rgb, 255:red, 0; green, 0; blue, 0 }  ,fill opacity=1 ] (297,250) .. controls (297,248.34) and (298.34,247) .. (300,247) .. controls (301.66,247) and (303,248.34) .. (303,250) .. controls (303,251.66) and (301.66,253) .. (300,253) .. controls (298.34,253) and (297,251.66) .. (297,250) -- cycle ;
\draw  [fill={rgb, 255:red, 0; green, 0; blue, 0 }  ,fill opacity=1 ] (297,210) .. controls (297,208.34) and (298.34,207) .. (300,207) .. controls (301.66,207) and (303,208.34) .. (303,210) .. controls (303,211.66) and (301.66,213) .. (300,213) .. controls (298.34,213) and (297,211.66) .. (297,210) -- cycle ;
\draw  [fill={rgb, 255:red, 0; green, 0; blue, 0 }  ,fill opacity=1 ] (397,210) .. controls (397,208.34) and (398.34,207) .. (400,207) .. controls (401.66,207) and (403,208.34) .. (403,210) .. controls (403,211.66) and (401.66,213) .. (400,213) .. controls (398.34,213) and (397,211.66) .. (397,210) -- cycle ;
\draw  [fill={rgb, 255:red, 0; green, 0; blue, 0 }  ,fill opacity=1 ] (397,250) .. controls (397,248.34) and (398.34,247) .. (400,247) .. controls (401.66,247) and (403,248.34) .. (403,250) .. controls (403,251.66) and (401.66,253) .. (400,253) .. controls (398.34,253) and (397,251.66) .. (397,250) -- cycle ;
\draw  [fill={rgb, 255:red, 0; green, 0; blue, 0 }  ,fill opacity=1 ] (447,250) .. controls (447,248.34) and (448.34,247) .. (450,247) .. controls (451.66,247) and (453,248.34) .. (453,250) .. controls (453,251.66) and (451.66,253) .. (450,253) .. controls (448.34,253) and (447,251.66) .. (447,250) -- cycle ;

\draw (244.5,257.5) node [anchor=north west][inner sep=0.75pt]   [align=left] {1};
\draw (293,256.5) node [anchor=north west][inner sep=0.75pt]   [align=left] {2};
\draw (395,258) node [anchor=north west][inner sep=0.75pt]   [align=left] {3};
\draw (446,257.5) node [anchor=north west][inner sep=0.75pt]   [align=left] {4};
\draw (335.5,-0.4) node [anchor=north west][inner sep=0.75pt]   [align=left] {0};

\end{tikzpicture}

}

\caption{The Hasse diagram for the poset that is formed from the split system $N=\{01|234,12|034,014|23,04|123,012|34 \}$ from Example \ref{posetex}.}\label{Hasse}

\end{figure}


\section{Distances from split networks}

Now that we have defined the split systems that we will study in the paper, 
we introduce distance matrices associated to the
split systems, which generalize tree metrics. Our goal in this paper will be to give polyhedral characterizations of the
dissimilarity maps that can arise from rooted circular split systems. To start, we will define dissimilarity maps and show how they relate to tree metrics.

\begin{definition}
A function $\delta: X \times X \to \mathbb{R}_{\geq 0}
$  is called a \emph{dissimilarity map} if it is a real, symmetric, and nonnegative function.
In addition, a dissimilarity map  is a \emph{metric} if it satisfies the triangle inequality on $X$, meaning for all $x,y,z\in X$,
\[
\delta(x,z)\leq \delta(x,y)+\delta(y,z).
\]
\end{definition}

The following two theorems give necessary and sufficient conditions for when a dissimilarity
map comes from a tree or a circular split system. For Theorem \ref{dis} see \cite{phy} for more details.
 
\begin{theorem}{(Four Point Condition)}\label{dis}
Let $\delta$ be a dissimilarity map on a finite set $X$. Then $\delta$ is the distance function for a tree
if and
only if it satisfies the four-point condition: for every four elements $i,j,k,l\in X$, two of
the three terms 
\[ 
\delta(i, j ) + \delta(k, l), \; \; \delta(i
, l) + \delta(j , k), \; \; \delta(i
, k) + \delta(j , l).
\]
are equal and are greater than or equal to the third.
\end{theorem}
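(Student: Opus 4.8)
The plan is to prove both directions, with the forward implication (tree metric $\Rightarrow$ four-point condition) being a short computation and the converse requiring an inductive tree construction.

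For the forward direction, suppose $\delta$ is realized by a tree $T$ with nonnegative edge lengths. Given four leaves $i,j,k,l$, the subtree they span has one of three quartet topologies; say it is $ij|kl$, with a (possibly length-zero) internal edge $e$ separating $\{i,j\}$ from $\{k,l\}$. Writing each pairwise distance as the sum of edge lengths along the connecting path, I would check that the two ``crossing'' sums $\delta(i,k)+\delta(j,l)$ and $\delta(i,l)+\delta(j,k)$ each traverse $e$ exactly twice, so they are equal, while $\delta(i,j)+\delta(k,l)$ does not traverse $e$ at all. Hence the two crossing sums coincide and exceed $\delta(i,j)+\delta(k,l)$ by exactly twice the length of $e \ge 0$, which is precisely the asserted condition.

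For the converse I would induct on $n=|X|$. First I note the triangle inequality, which we may either take as part of the metric hypothesis or extract from the four-point condition applied to a degenerate quadruple: setting $l=i$ and using $\delta(x,x)=0$ makes two of the three terms equal to $\delta(i,j)+\delta(i,k)$ and forces $\delta(i,j)+\delta(i,k)\ge\delta(j,k)$. The base case $n\le 3$ is then realized by a tripod whose pendant lengths are the half-sums $\tfrac12(\delta(i,j)+\delta(i,k)-\delta(j,k))$, all nonnegative by the triangle inequality. For the inductive step I would remove one leaf $x$, obtain a tree $T'$ realizing $\delta$ on $X\setminus\{x\}$ from the inductive hypothesis, and reattach $x$ by a new pendant edge at some point $m$ of $T'$.

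The main obstacle is the global consistency of this reattachment. I would choose a pair $p,q$ \emph{minimizing} the Gromov product $\tfrac12(\delta(p,x)+\delta(q,x)-\delta(p,q))$; this minimum should equal the true pendant length of $x$ and guarantee that the attachment point $m$ lies on the $p$--$q$ path inside $T'$ (with the distances $\delta(p,m)$ and $\delta(q,m)$ dictated by $\delta$ and summing to $\delta(p,q)$). It then remains to verify that the resulting tree reproduces $\delta(r,x)$ for \emph{every} other leaf $r$, and that $m$ genuinely lies at a vertex or in the interior of an edge. This is exactly where the full strength of the four-point condition is needed: applying it to the quadruples $\{p,q,r,x\}$ expresses each $\delta(r,x)$ in terms of quantities already realized by $T'$ together with the pendant length, forcing agreement with the attached tree. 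Confirming that all these relations hold simultaneously, and hence that a single consistent attachment works for all $r$, is the crux of the argument.
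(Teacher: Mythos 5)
The paper itself gives no proof of Theorem \ref{dis}: this is the classical tree-metric theorem, and the authors simply defer to \cite{phy} for the details. Your sketch reconstructs precisely the standard argument found in that reference (going back to Buneman). The forward direction is correct as you describe it: both crossing sums traverse the internal path of the quartet twice, so they agree and exceed $\delta(i,j)+\delta(k,l)$ by twice its (nonnegative) length. Your setup for the converse is also the right one: the degenerate-quadruple derivation of the triangle inequality is valid (if $\delta(j,k)$ exceeded $\delta(i,j)+\delta(i,k)$, the two automatically-equal terms would fail to dominate the third, violating the condition), the tripod base case works with the half-sum pendant lengths, and choosing $p,q$ to minimize the Gromov product $\tfrac{1}{2}(\delta(p,x)+\delta(q,x)-\delta(p,q))$ is exactly the device that makes the reattachment succeed. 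Nonnegativity of the pendant length and the containment of the attachment point $m$ in the $p$--$q$ path (that is, $0 \le \delta(p,m) \le \delta(p,q)$) both follow from the triangle inequality you established, so those verifications you flag are routine.

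The step you correctly identify as the crux does go through, but note that it needs slightly more than the four-point condition on $\{p,q,r,x\}$ alone: you must also invoke the minimality of the Gromov product over \emph{all} pairs, in particular the bounds $\tfrac{1}{2}(\delta(p,x)+\delta(q,x)-\delta(p,q)) \le \tfrac{1}{2}(\delta(p,x)+\delta(r,x)-\delta(p,r))$ and the analogue with $q$ in place of $p$, and then run a short case analysis on where the path from $r$ meets the $p$--$q$ path in $T'$ relative to $m$. The four-point condition on $\{p,q,r,x\}$ pins $\delta(r,x)$ down to one of finitely many values expressible through $T'$ and the pendant length, and the minimality bounds eliminate the configurations that would disagree with the augmented tree. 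With that case analysis written out, your proposal is a complete and correct proof, essentially identical to the one in the source the paper cites.
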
 

\begin{exmp}\label{distexample}
    Consider the dissimilarity map on the set $[5]$:
    $$\delta=\begin{bmatrix}
0 & 5 & 15 & 12 & 17\\
 & 0 & 16 & 13& 18\\
 &  & 0 & 11 & 16\\
&  &  & 0 & 7\\
 &  &  &  & 0\\
\end{bmatrix}.$$
This is a distance function for a tree which can be seen by calculating $\delta(i, j ) + \delta(k, l), \; \; \delta(i
, l) + \delta(j , k), \; \; \delta(i
, k) + \delta(j , l)$ for $i,j,k,l \in [5].$ \\
For example, for leaves $1,2,3,4$: \[\delta(1, 2 ) + \delta(3, 4)=16, \; \; \delta(1
, 3) + \delta(2 , 4)=28, \; \; \delta(1
, 4) + \delta(2 , 3)=28.\]

Since every set of 4 leaves satisfies the conditions of Theorem \ref{dis}, $\delta$ is a distance function for a tree. 
A tree that represents the dissimilarity can be seen in Figure \ref{treedist}. See \cite{Steel} for further details on this definition of distance.
\end{exmp}

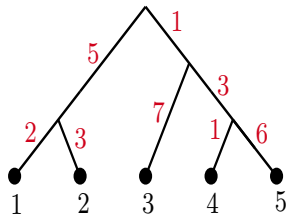
\begin{figure}[t]

\centering

\resizebox{120pt}{80pt}{

\tikzset{every picture/.style={line width=0.75pt}} 

\begin{tikzpicture}[x=0.75pt,y=0.75pt,yscale=-1,xscale=1]

\draw    (290,70) -- (260,100) ;
\draw    (290,100) -- (280,80) ;
\draw    (290,70) -- (320,40) ;
\draw    (320,40) -- (350,70) ;
\draw    (350,70) -- (370,90) ;
\draw    (320,100) -- (340,60) ;
\draw    (350,100) -- (360,80) ;
\draw    (360,80) -- (380,100) ;
\draw  [fill={rgb, 255:red, 0; green, 0; blue, 0 }  ,fill opacity=1 ] (257.5,100) .. controls (257.5,98.62) and (258.62,97.5) .. (260,97.5) .. controls (261.38,97.5) and (262.5,98.62) .. (262.5,100) .. controls (262.5,101.38) and (261.38,102.5) .. (260,102.5) .. controls (258.62,102.5) and (257.5,101.38) .. (257.5,100) -- cycle ;
\draw  [fill={rgb, 255:red, 0; green, 0; blue, 0 }  ,fill opacity=1 ] (287.5,100) .. controls (287.5,98.62) and (288.62,97.5) .. (290,97.5) .. controls (291.38,97.5) and (292.5,98.62) .. (292.5,100) .. controls (292.5,101.38) and (291.38,102.5) .. (290,102.5) .. controls (288.62,102.5) and (287.5,101.38) .. (287.5,100) -- cycle ;
\draw  [fill={rgb, 255:red, 0; green, 0; blue, 0 }  ,fill opacity=1 ] (317.5,100) .. controls (317.5,98.62) and (318.62,97.5) .. (320,97.5) .. controls (321.38,97.5) and (322.5,98.62) .. (322.5,100) .. controls (322.5,101.38) and (321.38,102.5) .. (320,102.5) .. controls (318.62,102.5) and (317.5,101.38) .. (317.5,100) -- cycle ;
\draw  [fill={rgb, 255:red, 0; green, 0; blue, 0 }  ,fill opacity=1 ] (347.5,100) .. controls (347.5,98.62) and (348.62,97.5) .. (350,97.5) .. controls (351.38,97.5) and (352.5,98.62) .. (352.5,100) .. controls (352.5,101.38) and (351.38,102.5) .. (350,102.5) .. controls (348.62,102.5) and (347.5,101.38) .. (347.5,100) -- cycle ;
\draw  [fill={rgb, 255:red, 0; green, 0; blue, 0 }  ,fill opacity=1 ] (377.5,100) .. controls (377.5,98.62) and (378.62,97.5) .. (380,97.5) .. controls (381.38,97.5) and (382.5,98.62) .. (382.5,100) .. controls (382.5,101.38) and (381.38,102.5) .. (380,102.5) .. controls (378.62,102.5) and (377.5,101.38) .. (377.5,100) -- cycle ;

\draw (256.67,105) node [anchor=north west][inner sep=0.75pt]  [font=\tiny] [align=left] {1};
\draw (287.33,104.67) node [anchor=north west][inner sep=0.75pt]  [font=\tiny] [align=left] {2};
\draw (317,104.67) node [anchor=north west][inner sep=0.75pt]  [font=\tiny] [align=left] {3};
\draw (346.33,104.67) node [anchor=north west][inner sep=0.75pt]  [font=\tiny] [align=left] {4};
\draw (377.67,104) node [anchor=north west][inner sep=0.75pt]  [font=\tiny] [align=left] {5};
\draw (263,79.67) node [anchor=north west][inner sep=0.75pt]  [font=\tiny,color={rgb, 255:red, 208; green, 2; blue, 27 }  ,opacity=1 ] [align=left] {2};
\draw (286,81.67) node [anchor=north west][inner sep=0.75pt]  [font=\tiny,color={rgb, 255:red, 208; green, 2; blue, 27 }  ,opacity=1 ] [align=left] {3};
\draw (291.67,51.67) node [anchor=north west][inner sep=0.75pt]  [font=\tiny,color={rgb, 255:red, 208; green, 2; blue, 27 }  ,opacity=1 ] [align=left] {5};
\draw (330,40.33) node [anchor=north west][inner sep=0.75pt]  [font=\tiny,color={rgb, 255:red, 208; green, 2; blue, 27 }  ,opacity=1 ] [align=left] {1};
\draw (351.33,63) node [anchor=north west][inner sep=0.75pt]  [font=\tiny,color={rgb, 255:red, 208; green, 2; blue, 27 }  ,opacity=1 ] [align=left] {3};
\draw (321.67,72.33) node [anchor=north west][inner sep=0.75pt]  [font=\tiny,color={rgb, 255:red, 208; green, 2; blue, 27 }  ,opacity=1 ] [align=left] {7};
\draw (347.67,78.33) node [anchor=north west][inner sep=0.75pt]  [font=\tiny,color={rgb, 255:red, 208; green, 2; blue, 27 }  ,opacity=1 ] [align=left] {1};
\draw (369,80) node [anchor=north west][inner sep=0.75pt]  [font=\tiny,color={rgb, 255:red, 208; green, 2; blue, 27 }  ,opacity=1 ] [align=left] {6};

\end{tikzpicture}

}

\caption{This weighted tree from Example \ref{distexample} }\label{treedist}

\end{figure}
This notion of distance on a tree can be generalized to split networks using the concept of a split separating leaves which is defined below:
\begin{definition}
Let $N$ be a split system, and $i,j \in[n]$. 
A split $A|B\in N$ \emph{separates} $i$ and $j$ in $N$ if $i\in A$ and $j \notin A$ or $i\in B$ and $j \notin B$.
\end{definition}

\begin{definition}
Let $A|B$ be a split and let the \emph{separation indicator function} for  $A|B$ be defined as:
\[
\mathbbm{1}_{A|B}(i,j)=
\begin{cases}
1&\text{ if } A|B  \text{ separates } i \text{ and } j  \\
 0  &  \text{ otherwise.}
\end{cases}
\]
\end{definition}

Note that $\mathbbm{1}_{A|B}$ is an example of a 
semimetric, since it is nonnegative, symmetric, and satisfies the triangle
inequality.  In the context of the theory of finite metric spaces, $\mathbbm{1}_{A|B}$
is known as a cut semimetric \cite{Deza2010}.

\begin{exmp}
    Consider the split $12|345$. 
    This split separates $1$ from $3$, $1$ from $4$, $1$ from $5$, $2$ from $3$, $2$ from $4$, and $2$ from $5$.
    The function $ \mathbbm{1}_{A|B}$   satisfies  $\mathbbm{1}_{A|B}(1,2)  = \mathbbm{1}_{A|B}(3,4)  = \mathbbm{1}_{A|B}(3,5) = \mathbbm{1}_{A|B}(4,5)  = 0$, and $\mathbbm{1}_{A|B}(1,3) = \mathbbm{1}_{A|B}(1,4) = \mathbbm{1}_{A|B}(1,5) = 
    \mathbbm{1}_{A|B}(2,3) = \mathbbm{1}_{A|B}(2,4) = \mathbbm{1}_{A|B}(2,5) = 1$. 
\end{exmp}

\begin{definition} 
Let $N$ be a split system with $n$ leaves. For each split $A|B\in N$, let $a_{A|B}\in \mathbb{R}_{\geq 0}$ be a weight. Let $\textbf{a}$ be the vector of weights for every split. The distance function between any two leaves $i,j \in [n]$  in the split system $N$ with weights $\textbf{a}$, denoted $\delta_{N,\textbf{a}}$ , is defined as follows:
 \[
 \delta_{N,\textbf{a}}(i,j)=\sum_{A|B\in N} a_{A|B} \mathbbm{1}_{A|B}(i,j).
 \]
\end{definition}

If it is clear in context which $N$ and $\textbf{a}$ are being used we will write $\delta_{N,\textbf{a}}(i,j)=\delta(i,j)$.   If a split system is rooted, $\delta(0,i)$ will denote the distance from the root $0$ to $i\in [n]$.
Note that if $N$ is a split network and the edges are labeled with the weight 
from the split they are realizing, then this distance between leaves is exactly the 
distance obtained from the sum of the edge weights along any shortest path between the leaves. 
Since sets of edges that all realize the same split will have the same weight, the distance between any two leaves, $i,j\in [n]$,  can be calculated in the dual polygon representation by drawing a line between the sides of the $n$-gon for those leaves  
and then taking the sum over weights of all diagonals that line intersects, 
as well as $a_{1\dots (i-1) (i+1) \dots n | i} $ and $a_{1\dots (j-1) (j+1) \dots n | j} $.

For circular split systems, there is a nice condition for when a metric represents the system, which can be seen in the following theorem:
\begin{theorem}
\cite{Kalmanson}\label{Kalmanson} Let $\delta$ be a  metric on a finite set $X$. 
Then $\delta$  is the distance function for a circular split system $N$ with weights $\textbf{a}$  if and
only if it satisfies the Kalmanson condition with respect to some circular ordering $\pi$.   
That is,
for every set of leaves $i < j < k < l$ under $\pi$, both inequalities hold:
\[
\delta(i, j ) + \delta(k, l) \leq \delta(i , k) + \delta(j , l)
\quad \quad \quad 
\delta(i, l) + \delta(j , k) \leq \delta(i, k) + \delta(j , l).
\]
\end{theorem}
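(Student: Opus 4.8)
The plan is to prove the two implications separately: necessity is a linear reduction to a single cut semimetric, and sufficiency is an explicit reconstruction of nonnegative split weights.

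For necessity, I would use that $\delta_{N,\mathbf a}=\sum_{A|B\in N}a_{A|B}\mathbbm{1}_{A|B}$ is a nonnegative combination and that the Kalmanson inequalities are linear in $\delta$; hence it suffices to check that the cut semimetric $\mathbbm{1}_{A|B}$ of a single circular (interval) split satisfies both inequalities. Fixing four leaves $i<j<k<l$ in the cyclic order, a circular split is an arc, so its boundary consists of two cuts falling into the four cyclic gaps determined by $i,j,k,l$; I would enumerate the $\binom{4}{2}$ placements of these two cuts and read off the four separation indicators in each case. In each case the number of separated pairs among the crossing chords $(i,k),(j,l)$ is at least the number among $(i,j),(k,l)$ and at least the number among $(i,l),(j,k)$, which is exactly the pair of inequalities $\mathbbm{1}(i,j)+\mathbbm{1}(k,l)\le\mathbbm{1}(i,k)+\mathbbm{1}(j,l)$ and $\mathbbm{1}(i,l)+\mathbbm{1}(j,k)\le\mathbbm{1}(i,k)+\mathbbm{1}(j,l)$. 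Summing against the nonnegative weights transfers these to $\delta$.

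For sufficiency, assume without loss of generality that $\pi=(1,\dots,n)$ and encode circular splits by unordered pairs of the $n$ cyclic gaps, so that there are exactly $\binom n2$ of them, matching the dimension of the space of dissimilarities. I would first record the lemma that the Kalmanson condition is invariant under cyclic rotation of $\pi$: each rotated inequality involving the wrapped element is a consequence of one original inequality on the quadruple obtained by reinserting that element, a one-line rearrangement. This lets me treat every arc, including those wrapping past $n$, uniformly. I would then define the weight of the split whose gaps sit between $p,p+1$ and between $q,q+1$ by the cross-difference
\[
w_{p,q} = \tfrac{1}{2}\bigl(\delta(p,q) + \delta(p+1,q+1) - \delta(p,q+1) - \delta(p+1,q)\bigr),
\]
with indices mod $n$ and $\delta(i,i)=0$. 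Nonnegativity is then immediate: when both arcs of the split have at least two elements, $w_{p,q}$ is the slack of the second Kalmanson inequality applied to $p<p+1,\ q<q+1$ (after a rotation if the arc wraps), while for a trivial split (adjacent gaps) the formula degenerates and $w_{p,q}\ge 0$ becomes the triangle inequality, which is where the metric hypothesis is used.

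It remains to verify the reconstruction $\sum_{S}w_S\,\mathbbm{1}_S(a,b)=\delta(a,b)$ for all $a<b$. Here the split $\{p,q\}$ separates $a$ and $b$ exactly when one of the gaps $g_p,g_q$ lies strictly between $a$ and $b$, so the left side is a double sum of cross-differences over gap-pairs straddling the interval $[a,b]$. I expect this telescoping identity to be the main obstacle: one must arrange the sum so that all interior terms cancel and only $\delta(a,b)$ survives, which requires careful bookkeeping of the boundary contributions of the trivial splits and the wrap-around arcs (the cyclic index convention is what makes the endpoints close up). Once the identity holds it simultaneously shows the $\binom n2$ vectors $\mathbbm{1}_S$ form a basis, so the weights are the unique preimage of $\delta$; combined with their nonnegativity this exhibits $\delta$ as $\delta_{N,\mathbf a}$ for a circular split system, and conversely any representation with a negative weight would violate Kalmanson by the necessity direction.
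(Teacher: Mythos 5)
The paper offers no proof of this theorem to compare against: it is quoted with a citation to \cite{Kalmanson}, so your proposal has to stand on its own. It does, and it is essentially the standard argument from the split-decomposition literature (Bandelt--Dress isolation indices specialized to circular splits): necessity by reducing, via linearity and nonnegativity of the weights, to a case check on a single circular cut semimetric, and sufficiency via the cross-difference weights $w_{p,q}$, whose nonnegativity is exactly the second Kalmanson inequality on the quadruple $(p,p+1,q,q+1)$, degenerating to the triangle inequality for trivial splits. Two remarks. First, your rotation lemma is even easier than you suggest: the crossing pair $\{(i,k),(j,l)\}$ is invariant under cyclic rotation of the quadruple, and rotation merely swaps the two non-crossing pairings, so the \emph{pair} of Kalmanson inequalities depends only on the cyclic order of $i,j,k,l$; no rearrangement is needed. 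Second, the telescoping identity you flag as the main obstacle closes up cleanly and is not a genuine gap. With gaps $g_p$ between $p$ and $p+1$ (indices mod $n$), the splits separating $a<b$ are indexed by $p\in\{a,\dots,b-1\}$ and $q\in\{b,\dots,a-1\}$ (cyclically), and for fixed $p$,
\[
\sum_{q}\tfrac{1}{2}\bigl(\delta(p,q)+\delta(p+1,q+1)-\delta(p,q+1)-\delta(p+1,q)\bigr)
=\tfrac{1}{2}\bigl(\delta(p,b)-\delta(p,a)+\delta(p+1,a)-\delta(p+1,b)\bigr),
\]
since $q+1$ runs cyclically up to $a$; summing over $p$ from $a$ to $b-1$ telescopes again to $\tfrac{1}{2}\bigl(\delta(a,b)+\delta(b,a)\bigr)=\delta(a,b)$, using only symmetry and $\delta(i,i)=0$. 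In particular the identity holds for arbitrary dissimilarities, which is precisely what justifies your basis/uniqueness claim for the $\binom{n}{2}$ circular cut semimetrics. One small logical slip: your closing assertion that ``any representation with a negative weight would violate Kalmanson by the necessity direction'' does not follow from necessity (which says nonnegative combinations satisfy Kalmanson, not that signed combinations violate it), but it is also superfluous, since the uniqueness you just established already rules out any other representation.
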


\begin{figure}[t]

\centering

\resizebox{180pt}{180pt}{

\tikzset{every picture/.style={line width=0.75pt}} 

\begin{tikzpicture}[x=0.75pt,y=0.75pt,yscale=-1,xscale=1]

\draw    (290,40) -- (330,10) ;
\draw    (330,10) -- (370,40) ;
\draw    (370,40) -- (330,70) ;
\draw    (290,40) -- (330,70) ;
\draw  [fill={rgb, 255:red, 0; green, 0; blue, 0 }  ,fill opacity=1 ] (287.5,40) .. controls (287.5,38.86) and (288.62,37.93) .. (290,37.93) .. controls (291.38,37.93) and (292.5,38.86) .. (292.5,40) .. controls (292.5,41.14) and (291.38,42.07) .. (290,42.07) .. controls (288.62,42.07) and (287.5,41.14) .. (287.5,40) -- cycle ;
\draw  [fill={rgb, 255:red, 0; green, 0; blue, 0 }  ,fill opacity=1 ] (327.5,10) .. controls (327.5,8.86) and (328.62,7.93) .. (330,7.93) .. controls (331.38,7.93) and (332.5,8.86) .. (332.5,10) .. controls (332.5,11.14) and (331.38,12.07) .. (330,12.07) .. controls (328.62,12.07) and (327.5,11.14) .. (327.5,10) -- cycle ;
\draw  [fill={rgb, 255:red, 0; green, 0; blue, 0 }  ,fill opacity=1 ] (327.5,70) .. controls (327.5,68.86) and (328.62,67.93) .. (330,67.93) .. controls (331.38,67.93) and (332.5,68.86) .. (332.5,70) .. controls (332.5,71.14) and (331.38,72.07) .. (330,72.07) .. controls (328.62,72.07) and (327.5,71.14) .. (327.5,70) -- cycle ;
\draw  [fill={rgb, 255:red, 0; green, 0; blue, 0 }  ,fill opacity=1 ] (367.5,40) .. controls (367.5,38.86) and (368.62,37.93) .. (370,37.93) .. controls (371.38,37.93) and (372.5,38.86) .. (372.5,40) .. controls (372.5,41.14) and (371.38,42.07) .. (370,42.07) .. controls (368.62,42.07) and (367.5,41.14) .. (367.5,40) -- cycle ;
\draw    (250,250) -- (250,69.67) ;
\draw    (250,69.67) -- (290,39.67) ;
\draw    (330,69.67) -- (290,99.67) ;
\draw    (250,69.67) -- (290,99.67) ;
\draw  [fill={rgb, 255:red, 0; green, 0; blue, 0 }  ,fill opacity=1 ] (247.5,69.67) .. controls (247.5,68.53) and (248.62,67.6) .. (250,67.6) .. controls (251.38,67.6) and (252.5,68.53) .. (252.5,69.67) .. controls (252.5,70.81) and (251.38,71.73) .. (250,71.73) .. controls (248.62,71.73) and (247.5,70.81) .. (247.5,69.67) -- cycle ;
\draw  [fill={rgb, 255:red, 0; green, 0; blue, 0 }  ,fill opacity=1 ] (287.5,99.67) .. controls (287.5,98.53) and (288.62,97.6) .. (290,97.6) .. controls (291.38,97.6) and (292.5,98.53) .. (292.5,99.67) .. controls (292.5,100.81) and (291.38,101.73) .. (290,101.73) .. controls (288.62,101.73) and (287.5,100.81) .. (287.5,99.67) -- cycle ;
\draw    (370,40) -- (410,70) ;
\draw    (410,70) -- (370,100) ;
\draw    (330,70) -- (370,100) ;
\draw  [fill={rgb, 255:red, 0; green, 0; blue, 0 }  ,fill opacity=1 ] (367.5,100) .. controls (367.5,98.86) and (368.62,97.93) .. (370,97.93) .. controls (371.38,97.93) and (372.5,98.86) .. (372.5,100) .. controls (372.5,101.14) and (371.38,102.07) .. (370,102.07) .. controls (368.62,102.07) and (367.5,101.14) .. (367.5,100) -- cycle ;
\draw  [fill={rgb, 255:red, 0; green, 0; blue, 0 }  ,fill opacity=1 ] (407.5,70) .. controls (407.5,68.86) and (408.62,67.93) .. (410,67.93) .. controls (411.38,67.93) and (412.5,68.86) .. (412.5,70) .. controls (412.5,71.14) and (411.38,72.07) .. (410,72.07) .. controls (408.62,72.07) and (407.5,71.14) .. (407.5,70) -- cycle ;
\draw    (410,70) -- (450,100) ;
\draw    (450,100) -- (410,130) ;
\draw    (370,100) -- (410,130) ;
\draw  [fill={rgb, 255:red, 0; green, 0; blue, 0 }  ,fill opacity=1 ] (407.5,130) .. controls (407.5,128.86) and (408.62,127.93) .. (410,127.93) .. controls (411.38,127.93) and (412.5,128.86) .. (412.5,130) .. controls (412.5,131.14) and (411.38,132.07) .. (410,132.07) .. controls (408.62,132.07) and (407.5,131.14) .. (407.5,130) -- cycle ;
\draw  [fill={rgb, 255:red, 0; green, 0; blue, 0 }  ,fill opacity=1 ] (447.5,100) .. controls (447.5,98.86) and (448.62,97.93) .. (450,97.93) .. controls (451.38,97.93) and (452.5,98.86) .. (452.5,100) .. controls (452.5,101.14) and (451.38,102.07) .. (450,102.07) .. controls (448.62,102.07) and (447.5,101.14) .. (447.5,100) -- cycle ;
\draw    (330,70) -- (330,110) ;
\draw  [fill={rgb, 255:red, 0; green, 0; blue, 0 }  ,fill opacity=1 ] (327.5,110) .. controls (327.5,108.86) and (328.62,107.93) .. (330,107.93) .. controls (331.38,107.93) and (332.5,108.86) .. (332.5,110) .. controls (332.5,111.14) and (331.38,112.07) .. (330,112.07) .. controls (328.62,112.07) and (327.5,111.14) .. (327.5,110) -- cycle ;
\draw    (370,140) -- (330,110) ;
\draw  [fill={rgb, 255:red, 0; green, 0; blue, 0 }  ,fill opacity=1 ] (367.5,140) .. controls (367.5,138.86) and (368.62,137.93) .. (370,137.93) .. controls (371.38,137.93) and (372.5,138.86) .. (372.5,140) .. controls (372.5,141.14) and (371.38,142.07) .. (370,142.07) .. controls (368.62,142.07) and (367.5,141.14) .. (367.5,140) -- cycle ;
\draw    (370,100) -- (370,140) ;
\draw    (290,99.67) -- (290,139.67) ;
\draw    (330,109.67) -- (290,139.67) ;
\draw    (410,130) -- (410,170) ;
\draw  [fill={rgb, 255:red, 0; green, 0; blue, 0 }  ,fill opacity=1 ] (287.5,139.67) .. controls (287.5,138.53) and (288.62,137.6) .. (290,137.6) .. controls (291.38,137.6) and (292.5,138.53) .. (292.5,139.67) .. controls (292.5,140.81) and (291.38,141.73) .. (290,141.73) .. controls (288.62,141.73) and (287.5,140.81) .. (287.5,139.67) -- cycle ;
\draw  [fill={rgb, 255:red, 0; green, 0; blue, 0 }  ,fill opacity=1 ] (407.5,170) .. controls (407.5,168.86) and (408.62,167.93) .. (410,167.93) .. controls (411.38,167.93) and (412.5,168.86) .. (412.5,170) .. controls (412.5,171.14) and (411.38,172.07) .. (410,172.07) .. controls (408.62,172.07) and (407.5,171.14) .. (407.5,170) -- cycle ;
\draw    (410,170) -- (370,140) ;
\draw    (370,140) -- (370,180) ;
\draw    (410,170) -- (410,210) ;
\draw    (410,210) -- (370,180) ;
\draw    (450,250) -- (450,100) ;
\draw    (290,250) -- (290,139.67) ;
\draw    (370,250) -- (369.33,178.67) ;
\draw    (410,250) -- (410,210) ;
\draw  [fill={rgb, 255:red, 0; green, 0; blue, 0 }  ,fill opacity=1 ] (367.5,180) .. controls (367.5,178.86) and (368.62,177.93) .. (370,177.93) .. controls (371.38,177.93) and (372.5,178.86) .. (372.5,180) .. controls (372.5,181.14) and (371.38,182.07) .. (370,182.07) .. controls (368.62,182.07) and (367.5,181.14) .. (367.5,180) -- cycle ;
\draw  [fill={rgb, 255:red, 0; green, 0; blue, 0 }  ,fill opacity=1 ] (287.5,250) .. controls (287.5,248.86) and (288.62,247.93) .. (290,247.93) .. controls (291.38,247.93) and (292.5,248.86) .. (292.5,250) .. controls (292.5,251.14) and (291.38,252.07) .. (290,252.07) .. controls (288.62,252.07) and (287.5,251.14) .. (287.5,250) -- cycle ;
\draw  [fill={rgb, 255:red, 0; green, 0; blue, 0 }  ,fill opacity=1 ] (407.5,210) .. controls (407.5,208.86) and (408.62,207.93) .. (410,207.93) .. controls (411.38,207.93) and (412.5,208.86) .. (412.5,210) .. controls (412.5,211.14) and (411.38,212.07) .. (410,212.07) .. controls (408.62,212.07) and (407.5,211.14) .. (407.5,210) -- cycle ;
\draw  [fill={rgb, 255:red, 0; green, 0; blue, 0 }  ,fill opacity=1 ] (247.5,250) .. controls (247.5,248.86) and (248.62,247.93) .. (250,247.93) .. controls (251.38,247.93) and (252.5,248.86) .. (252.5,250) .. controls (252.5,251.14) and (251.38,252.07) .. (250,252.07) .. controls (248.62,252.07) and (247.5,251.14) .. (247.5,250) -- cycle ;
\draw  [fill={rgb, 255:red, 0; green, 0; blue, 0 }  ,fill opacity=1 ] (367.5,250) .. controls (367.5,248.86) and (368.62,247.93) .. (370,247.93) .. controls (371.38,247.93) and (372.5,248.86) .. (372.5,250) .. controls (372.5,251.14) and (371.38,252.07) .. (370,252.07) .. controls (368.62,252.07) and (367.5,251.14) .. (367.5,250) -- cycle ;
\draw  [fill={rgb, 255:red, 0; green, 0; blue, 0 }  ,fill opacity=1 ] (407.5,250) .. controls (407.5,248.86) and (408.62,247.93) .. (410,247.93) .. controls (411.38,247.93) and (412.5,248.86) .. (412.5,250) .. controls (412.5,251.14) and (411.38,252.07) .. (410,252.07) .. controls (408.62,252.07) and (407.5,251.14) .. (407.5,250) -- cycle ;
\draw  [fill={rgb, 255:red, 0; green, 0; blue, 0 }  ,fill opacity=1 ] (447.5,250) .. controls (447.5,248.86) and (448.62,247.93) .. (450,247.93) .. controls (451.38,247.93) and (452.5,248.86) .. (452.5,250) .. controls (452.5,251.14) and (451.38,252.07) .. (450,252.07) .. controls (448.62,252.07) and (447.5,251.14) .. (447.5,250) -- cycle ;

\draw (239,251) node [anchor=north west][inner sep=0.75pt]   [align=left] {1};
\draw (359,251) node [anchor=north west][inner sep=0.75pt]   [align=left] {3};
\draw (399,252) node [anchor=north west][inner sep=0.75pt]   [align=left] {4};
\draw (313.2,-1.5) node [anchor=north west][inner sep=0.75pt]   [align=left] {0};
\draw (279,251) node [anchor=north west][inner sep=0.75pt]   [align=left] {2};
\draw (437.08,254.2) node [anchor=north west][inner sep=0.75pt]   [align=left] {5};
\draw (270.82,75.51) node [anchor=north west][inner sep=0.75pt]  [font=\scriptsize,color={rgb, 255:red, 74; green, 144; blue, 226 }  ,opacity=1 ,rotate=-0.32] [align=left] {5};
\draw (308.24,42.51) node [anchor=north west][inner sep=0.75pt]  [font=\scriptsize,color={rgb, 255:red, 74; green, 144; blue, 226 }  ,opacity=1 ,rotate=-0.32] [align=left] {5};
\draw (347.24,13.01) node [anchor=north west][inner sep=0.75pt]  [font=\scriptsize,color={rgb, 255:red, 74; green, 144; blue, 226 }  ,opacity=1 ,rotate=-0.32] [align=left] {5};
\draw (267.57,40.01) node [anchor=north west][inner sep=0.75pt]  [font=\scriptsize,color={rgb, 255:red, 74; green, 144; blue, 226 }  ,opacity=1 ,rotate=-0.32] [align=left] {2};
\draw (300.32,72.76) node [anchor=north west][inner sep=0.75pt]  [font=\scriptsize,color={rgb, 255:red, 74; green, 144; blue, 226 }  ,opacity=1 ,rotate=-0.32] [align=left] {2};
\draw (306.57,108.76) node [anchor=north west][inner sep=0.75pt]  [font=\scriptsize,color={rgb, 255:red, 74; green, 144; blue, 226 }  ,opacity=1 ,rotate=-0.32] [align=left] {2};
\draw (302.07,12.51) node [anchor=north west][inner sep=0.75pt]  [font=\scriptsize,color={rgb, 255:red, 74; green, 144; blue, 226 }  ,opacity=1 ,rotate=-0.32] [align=left] {4};
\draw (342.57,43.76) node [anchor=north west][inner sep=0.75pt]  [font=\scriptsize,color={rgb, 255:red, 74; green, 144; blue, 226 }  ,opacity=1 ,rotate=-0.32] [align=left] {4};
\draw (383.07,73.76) node [anchor=north west][inner sep=0.75pt]  [font=\scriptsize,color={rgb, 255:red, 74; green, 144; blue, 226 }  ,opacity=1 ,rotate=-0.32] [align=left] {4};
\draw (421.32,104.01) node [anchor=north west][inner sep=0.75pt]  [font=\scriptsize,color={rgb, 255:red, 74; green, 144; blue, 226 }  ,opacity=1 ,rotate=-0.32] [align=left] {4};
\draw (388.57,41.76) node [anchor=north west][inner sep=0.75pt]  [font=\scriptsize,color={rgb, 255:red, 74; green, 144; blue, 226 }  ,opacity=1 ,rotate=-0.32] [align=left] {7};
\draw (349.82,74.76) node [anchor=north west][inner sep=0.75pt]  [font=\scriptsize,color={rgb, 255:red, 74; green, 144; blue, 226 }  ,opacity=1 ,rotate=-0.32] [align=left] {7};
\draw (345.82,112.76) node [anchor=north west][inner sep=0.75pt]  [font=\scriptsize,color={rgb, 255:red, 74; green, 144; blue, 226 }  ,opacity=1 ,rotate=-0.32] [align=left] {7};
\draw (427.82,72.51) node [anchor=north west][inner sep=0.75pt]  [font=\scriptsize,color={rgb, 255:red, 74; green, 144; blue, 226 }  ,opacity=1 ,rotate=-0.32] [align=left] {1};
\draw (393.57,103.51) node [anchor=north west][inner sep=0.75pt]  [font=\scriptsize,color={rgb, 255:red, 74; green, 144; blue, 226 }  ,opacity=1 ,rotate=-0.32] [align=left] {1};
\draw (384.57,141.51) node [anchor=north west][inner sep=0.75pt]  [font=\scriptsize,color={rgb, 255:red, 74; green, 144; blue, 226 }  ,opacity=1 ,rotate=-0.32] [align=left] {1};
\draw (279.57,112.01) node [anchor=north west][inner sep=0.75pt]  [font=\scriptsize,color={rgb, 255:red, 74; green, 144; blue, 226 }  ,opacity=1 ,rotate=-0.32] [align=left] {3};
\draw (319.73,89.67) node [anchor=north west][inner sep=0.75pt]  [font=\scriptsize,color={rgb, 255:red, 74; green, 144; blue, 226 }  ,opacity=1 ,rotate=-0.32] [align=left] {3};
\draw (360.23,106.92) node [anchor=north west][inner sep=0.75pt]  [font=\scriptsize,color={rgb, 255:red, 74; green, 144; blue, 226 }  ,opacity=1 ,rotate=-0.32] [align=left] {3};
\draw (398.4,140.17) node [anchor=north west][inner sep=0.75pt]  [font=\scriptsize,color={rgb, 255:red, 74; green, 144; blue, 226 }  ,opacity=1 ,rotate=-0.32] [align=left] {3};
\draw (384.32,180.51) node [anchor=north west][inner sep=0.75pt]  [font=\scriptsize,color={rgb, 255:red, 74; green, 144; blue, 226 }  ,opacity=1 ,rotate=-0.32] [align=left] {1};
\draw (359.07,153.01) node [anchor=north west][inner sep=0.75pt]  [font=\scriptsize,color={rgb, 255:red, 74; green, 144; blue, 226 }  ,opacity=1 ,rotate=-0.32] [align=left] {6};
\draw (401.32,179.01) node [anchor=north west][inner sep=0.75pt]  [font=\scriptsize,color={rgb, 255:red, 74; green, 144; blue, 226 }  ,opacity=1 ,rotate=-0.32] [align=left] {6};
\draw (237.82,163.61) node [anchor=north west][inner sep=0.75pt]  [font=\scriptsize,color={rgb, 255:red, 74; green, 144; blue, 226 }  ,opacity=1 ,rotate=-0.32] [align=left] {2};
\draw (274.07,178.11) node [anchor=north west][inner sep=0.75pt]  [font=\scriptsize,color={rgb, 255:red, 74; green, 144; blue, 226 }  ,opacity=1 ,rotate=-0.32] [align=left] {12};
\draw (358.32,212.86) node [anchor=north west][inner sep=0.75pt]  [font=\scriptsize,color={rgb, 255:red, 74; green, 144; blue, 226 }  ,opacity=1 ,rotate=-0.32] [align=left] {7};
\draw (399.65,222.19) node [anchor=north west][inner sep=0.75pt]  [font=\scriptsize,color={rgb, 255:red, 74; green, 144; blue, 226 }  ,opacity=1 ,rotate=-0.32] [align=left] {3};
\draw (440.57,170.36) node [anchor=north west][inner sep=0.75pt]  [font=\scriptsize,color={rgb, 255:red, 74; green, 144; blue, 226 }  ,opacity=1 ,rotate=-0.32] [align=left] {5};

\end{tikzpicture}

}

\caption{The circular split system from  Example \ref{netdistex} }\label{netdist}

\end{figure}
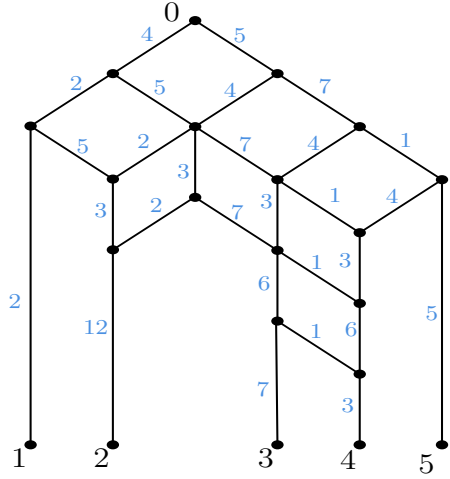
\begin{exmp} \label{netdistex}
    Consider the dissimilarity map, $\delta$, on the set $[5]$:
    $$\delta=\begin{bmatrix}
0 & 22 & 32 & 29 & 26 \\
 & 0 & 34 & 31& 34 \\
 &  & 0 & 11 & 26 \\
&  &  & 0 & 21 \\
 &  &  &  & 0 \\
\end{bmatrix}.$$ 
This is a distance function for a circular split system which can be seen by calculating \[
\delta(i, j ) + \delta(k, l) \leq \delta(i , k) + \delta(j , l)
\quad \quad \quad 
\delta(i, l) + \delta(j , k) \leq \delta(i, k) + \delta(j , l).
\] for $i,j,k,l \in [5].$ 
For example, for the leaves $1,2,3,4$, we have two inequalities: \[\delta(1, 2 ) + \delta(3, 4)=33\leq 63=\delta(1,3) + \delta(2,4), \]
\[ \delta(1
, 4) + \delta(2 , 3)=63\leq 63=\delta(1,3) + \delta(2,4).\]

Since every set of 4 leaves satisfies the conditions of Theorem \ref{Kalmanson}, $\delta$ is a distance function for a circular split system. 
The corresponding split system appears in Figure \ref{netdist}.

\end{exmp}

We will now define equidistant circular split systems,
which are circular rooted split systems where the distances between the root and all of the leaves are equal. 

\begin{definition}
Let  $N$ be a rooted  circular split system with $n$ leaves, weights $\textbf{a}$, and root labeled $0$. The distance function $\delta_{N,\textbf{a}}$
is \emph{equidistant} if the distance from the root to every leaf is equal. That is, $\delta_{N,\textbf{a}}$ is equidistant if for all $i,j\in [n]$, $\delta_{N,\textbf{a}}(0,i)= \delta_{N,\textbf{a}}(0,j)$.  
\end{definition}

The pair $(N,\delta_{N,\textbf{a}})$ will be called an equidistant circular split system 
if $\delta_{N,\textbf{a}}$
is equidistant. Note that a consequence of this condition is that the distance from every internal vertex to the leaves below it must also be equal.

\begin{exmp} \label{disim}
Consider the rooted split network from Example \ref{dualpolygonex}. 
If weights are applied to the splits of the split system as shown in 
Figure \ref{equidistantfig}, this will be an equidistant circular 
split system where the distance from the root to each of the leaves is $8$. 
The resulting dissimilarity map is
\[
\delta=\begin{bmatrix}
0 & 12 & 16 & 16 & 16\\
 & 0 & 8 & 14& 14\\
 &  & 0 & 14 & 14\\
&  &  & 0 & 4\\
 &  &  &  & 0\\
\end{bmatrix}.\]

\end{exmp}

\begin{figure}[t]
\centering

\tikzset{every picture/.style={line width=0.75pt}} 

\begin{tikzpicture}[x=0.75pt,y=0.75pt,yscale=-1,xscale=1]

\draw [color={rgb, 255:red, 208; green, 2; blue, 27 }  ,draw opacity=1 ]   (362.01,77.77) -- (409.02,109.72) ;
\draw [color={rgb, 255:red, 208; green, 2; blue, 27 }  ,draw opacity=1 ]   (315,109.72) -- (362.01,141.67) ;
\draw    (409.02,109.72) -- (487.37,205.57) ;
\draw [color={rgb, 255:red, 80; green, 227; blue, 194 }  ,draw opacity=1 ]   (409.02,109.72) -- (362.01,141.67) ;
\draw [color={rgb, 255:red, 80; green, 227; blue, 194 }  ,draw opacity=1 ]   (315,109.72) -- (362.01,77.77) ;
\draw [color={rgb, 255:red, 74; green, 144; blue, 226 }  ,draw opacity=1 ]   (362.01,141.67) -- (330.33,171) ;
\draw [color={rgb, 255:red, 74; green, 144; blue, 226 }  ,draw opacity=1 ]   (315,109.72) -- (282.99,138.05) ;
\draw [color={rgb, 255:red, 208; green, 2; blue, 27 }  ,draw opacity=1 ]   (282.99,138.05) -- (330,170) ;
\draw    (330,170) -- (330,201.95) ;
\draw    (282.99,138.05) -- (282.99,201.95) ;
\draw [color={rgb, 255:red, 245; green, 166; blue, 35 }  ,draw opacity=1 ]   (205.3,173.62) -- (315,109.72) ;
\draw    (205.3,173.62) -- (236.64,205.57) ;
\draw    (158.29,205.57) -- (205.3,173.62) ;
\draw  [fill={rgb, 255:red, 0; green, 0; blue, 0 }  ,fill opacity=1 ] (160.15,205.57) .. controls (160.15,204.54) and (159.32,203.7) .. (158.29,203.7) .. controls (157.26,203.7) and (156.43,204.54) .. (156.43,205.57) .. controls (156.43,206.6) and (157.26,207.44) .. (158.29,207.44) .. controls (159.32,207.44) and (160.15,206.6) .. (160.15,205.57) -- cycle ;
\draw  [fill={rgb, 255:red, 0; green, 0; blue, 0 }  ,fill opacity=1 ] (238.51,205.57) .. controls (238.51,204.54) and (237.67,203.7) .. (236.64,203.7) .. controls (235.61,203.7) and (234.78,204.54) .. (234.78,205.57) .. controls (234.78,206.6) and (235.61,207.44) .. (236.64,207.44) .. controls (237.67,207.44) and (238.51,206.6) .. (238.51,205.57) -- cycle ;
\draw  [fill={rgb, 255:red, 0; green, 0; blue, 0 }  ,fill opacity=1 ] (284.85,201.95) .. controls (284.85,200.92) and (284.02,200.08) .. (282.99,200.08) .. controls (281.96,200.08) and (281.12,200.92) .. (281.12,201.95) .. controls (281.12,202.98) and (281.96,203.82) .. (282.99,203.82) .. controls (284.02,203.82) and (284.85,202.98) .. (284.85,201.95) -- cycle ;
\draw  [fill={rgb, 255:red, 0; green, 0; blue, 0 }  ,fill opacity=1 ] (331.86,201.95) .. controls (331.86,200.92) and (331.03,200.08) .. (330,200.08) .. controls (328.97,200.08) and (328.14,200.92) .. (328.14,201.95) .. controls (328.14,202.98) and (328.97,203.82) .. (330,203.82) .. controls (331.03,203.82) and (331.86,202.98) .. (331.86,201.95) -- cycle ;
\draw  [fill={rgb, 255:red, 0; green, 0; blue, 0 }  ,fill opacity=1 ] (489.24,205.57) .. controls (489.24,204.54) and (488.4,203.7) .. (487.37,203.7) .. controls (486.34,203.7) and (485.51,204.54) .. (485.51,205.57) .. controls (485.51,206.6) and (486.34,207.44) .. (487.37,207.44) .. controls (488.4,207.44) and (489.24,206.6) .. (489.24,205.57) -- cycle ;

\draw (359.62,61.99) node [anchor=north west][inner sep=0.75pt]  [font=\tiny,color={rgb, 255:red, 208; green, 2; blue, 27 }  ,opacity=1 ] [align=left] {0};
\draw (489.6,212.61) node [anchor=north west][inner sep=0.75pt]  [font=\tiny,color={rgb, 255:red, 208; green, 2; blue, 27 }  ,opacity=1 ] [align=left] {1};
\draw (324,210) node [anchor=north west][inner sep=0.75pt]  [font=\tiny,color={rgb, 255:red, 208; green, 2; blue, 27 }  ,opacity=1 ] [align=left] {2};
\draw (279,212) node [anchor=north west][inner sep=0.75pt]  [font=\tiny,color={rgb, 255:red, 208; green, 2; blue, 27 }  ,opacity=1 ] [align=left] {3};
\draw (234.17,213.08) node [anchor=north west][inner sep=0.75pt]  [font=\tiny,color={rgb, 255:red, 208; green, 2; blue, 27 }  ,opacity=1 ] [align=left] {4};
\draw (156.34,213.61) node [anchor=north west][inner sep=0.75pt]  [font=\tiny,color={rgb, 255:red, 208; green, 2; blue, 27 }  ,opacity=1 ] [align=left] {5};
\draw (381.53,76.8) node [anchor=north west][inner sep=0.75pt]   [align=left] {{\scriptsize 2}};
\draw (341.53,113.47) node [anchor=north west][inner sep=0.75pt]   [align=left] {{\scriptsize 2}};
\draw (309,141) node [anchor=north west][inner sep=0.75pt]   [align=left] {{\scriptsize 2}};
\draw (171.87,174.8) node [anchor=north west][inner sep=0.75pt]   [align=left] {{\scriptsize 2}};
\draw (456.53,147.13) node [anchor=north west][inner sep=0.75pt]   [align=left] {{\scriptsize 6}};
\draw (336.53,76.47) node [anchor=north west][inner sep=0.75pt]  [font=\scriptsize] [align=left] {1};
\draw (300.99,120.88) node [anchor=north west][inner sep=0.75pt]  [font=\scriptsize] [align=left] {3};
\draw (341.87,157.47) node [anchor=north west][inner sep=0.75pt]  [font=\scriptsize] [align=left] {3};
\draw (390.53,123.8) node [anchor=north west][inner sep=0.75pt]  [font=\scriptsize] [align=left] {1};
\draw (239,138) node [anchor=north west][inner sep=0.75pt]  [font=\scriptsize] [align=left] {5};
\draw (269,168) node [anchor=north west][inner sep=0.75pt]  [font=\scriptsize] [align=left] {4};
\draw (319,182) node [anchor=north west][inner sep=0.75pt]  [font=\scriptsize] [align=left] {2};
\draw (226.2,178.47) node [anchor=north west][inner sep=0.75pt]  [font=\scriptsize] [align=left] {2};

\end{tikzpicture}

\caption{The equidistant split network from Example \ref{disim} }\label{equidistantfig}
\end{figure}

Now that equidistant split networks have been defined, we can consider the cone of all possible distance functions that make a particular split network equidistant. This cone, which is the main interest of this paper, is the space of equidistant circular split networks and thus, membership in the cone indicates that a set of species has this particular hybridization structure.
\begin{definition}
Let $N$ be a rooted circular split system. Then the \emph{Equidistant Cone of $N$} is
\begin{equation}
EDC_N =  \{  \delta:\delta=\delta_{N,\textbf{a}} \text{ for some } \textbf{a} \text { and } (N,\delta_{N,\textbf{a}}) \text{ is equidistant}  \} \subseteq \mathbb{R}^{n \choose 2}.
\end{equation}
\end{definition}

The use of the word ``cone'' in the definition is made clear from the following 
proposition.

\begin{proposition}
The space $EDC_N$ is a polyhedral cone.
\end{proposition}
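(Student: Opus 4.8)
The plan is to realize $EDC_N$ as the image of a polyhedral cone under a linear map, and then invoke the standard fact that a linear image of a polyhedral cone is again polyhedral. First I would observe that for a fixed split system $N$ the assignment $\textbf{a} \mapsto \delta_{N,\textbf{a}}$ is linear in the weight vector $\textbf{a}$: each coordinate
\[
\delta_{N,\textbf{a}}(i,j) = \sum_{A|B\in N} a_{A|B}\, \mathbbm{1}_{A|B}(i,j)
\]
is a fixed linear combination of the entries of $\textbf{a}$, with coefficients the constants $\mathbbm{1}_{A|B}(i,j)$. Let $\phi : \mathbb{R}^{|N|} \to \mathbb{R}^{\binom{n}{2}}$ denote the linear map recording only the coordinates $\delta_{N,\textbf{a}}(i,j)$ for $1 \le i < j \le n$.

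Next I would describe the set of admissible weight vectors. Since each weight is required to be nonnegative, $\textbf{a}$ ranges over the orthant $\mathbb{R}_{\ge 0}^{|N|}$. The equidistant condition $\delta_{N,\textbf{a}}(0,i) = \delta_{N,\textbf{a}}(0,j)$ for all $i,j\in[n]$ is, for each pair, a homogeneous linear equation in $\textbf{a}$, again because $\delta_{N,\textbf{a}}(0,i)$ depends linearly on $\textbf{a}$. Hence the set of valid weight vectors
\[
C = \{\textbf{a} \in \mathbb{R}_{\ge 0}^{|N|} : \delta_{N,\textbf{a}}(0,i) = \delta_{N,\textbf{a}}(0,j) \text{ for all } i,j \in [n]\}
\]
is the intersection of the nonnegative orthant with a linear subspace, i.e.\ a finite intersection of closed homogeneous half-spaces and hyperplanes, and is therefore a polyhedral cone.

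Finally I would identify $EDC_N = \phi(C)$ directly from the definition, and conclude using the Minkowski--Weyl theorem: $C$, being polyhedral, is finitely generated, so its image $\phi(C)$ is generated by the finitely many images of those generators and hence is again a finitely generated --- equivalently polyhedral --- cone. Alternatively, one may project the $\textbf{a}$-coordinates away by Fourier--Motzkin elimination. The only point requiring a little care is that the ambient coordinates of $EDC_N$ are the pairwise leaf distances $\delta(i,j)$ with $i,j\in[n]$, whereas the equidistant constraints are phrased in terms of the root distances $\delta(0,i)$; keeping the domain cone $C$ in weight space and passing to the image under $\phi$ handles this bookkeeping automatically, so no genuine obstacle arises beyond invoking the standard polyhedral closure results. (I note that the sharper description of $EDC_N$ by explicit facet inequalities is the content of the later Theorem~\ref{facets}, and is not needed here.)
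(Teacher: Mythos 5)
Your proof is correct and follows essentially the same route as the paper: both arguments exploit the linearity of $\textbf{a} \mapsto \delta_{N,\textbf{a}}$, cut out the equidistant condition as a linear subspace, and conclude via the standard facts that intersection with a subspace and linear images (projections) preserve polyhedral cones. The only cosmetic difference is that you impose the equidistance equations in weight space before applying the linear map, whereas the paper first forms the cone of all distance vectors (including the root coordinates $\delta(0,i)$), intersects with the equidistance subspace there, and then projects those coordinates away --- the two bookkeepings are equivalent by linearity.
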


\begin{proof}
 Let $N$ be a rooted circular split system on $[n]$ leaves and root $0$. Consider the space 
\[ C_N= \{  \delta:\delta=\delta_{N,\textbf{a}} \text{ for some weight vector } \textbf{a}  \}. \]
Since 
 \[
 \delta_{N,\textbf{a}}(i,j)=\sum_{A|B\in S} a_{A|B} \mathbbm{1}_{A|B}(i,j),
 \] 
each point in $C_N$ is a positive combination of separation indicator functions, $\mathbbm{1}_{A|B}$, and thus $C_N$ is a polyhedral cone. Now $EDC_N$ can be obtained by taking the cone $C_N$, intersecting it with the linear space where for all $i,j\in [n]$, $\delta_{N,\textbf{a}}(0,i)= \delta_{N,\textbf{a}}(0,j)$, and then projecting away the coordinates $\delta(0,i)$. Intersecting with a linear space and then projecting preserves being a polyhedral cone and so $EDC_N$ is a polyhedral cone as well.
\end{proof}

We wish to understand the structure of $EDC_N$ for every circular split network, but first the following specific circular split system is introduced as every other 
circular split system is a subset of it. 
Thus studying the properties of this split system will provide insight into all other circular split systems as well.

\begin{definition}
Let the \emph{complete circular split system}, $KN_n$, on $n$ leaves and root $0$ be the rooted circular split system with every diagonal in the dual polygon representation and every trivial split.  In other words it has splits:
\[
i\dots j | j+1 \dots n 0  \dots i-1
\]
where $i<j\in [n]$ and 
\[
i |0 1 \dots i-1 i+1 \dots n   
\]
where $i\in [n]$.
\end{definition}

The complete circular split system has every possible circular split for a particular circular ordering. The complete circular split system $KN_n$ has $\binom {n+1} 2 -1$ splits, $\binom {n} 2 -1$ of which come from diagonals of the $n+1$-gon and $n$ of which are trivial splits. Note we do not have a trivial split for the root $0$ which is why $KN_n$ has $\binom {n+1} 2 -1$ splits instead of $\binom {n+1} 2$.

\section{Facets of $EDC_{KN_n}$ }\label{sec:facets}

Our goal in this section and the next is to give a complete polyhedral description of the
cone $EDC_{KN_n}$.  Our main result  is describing the facet defining inequalities of 
$EDC_{KN_n}$ shown in Theorem \ref{facets}. In Section \ref{sec:extremerays}, we determine the extreme rays of
$EDC_{KN_n}$.
 All other circular split systems are contained in $KN_n$ and thus  $EDC_{KN_n}$ can be used to characterize $EDC_{N}$ for any circular rooted split system $N$.

\begin{theorem} \label{facets}
Let $EDC_{KN_n}$  be the equidistant cone of $KN_n$. Let $[i,j]= \{i, i+1, \dots, j\}$.  The facets of $EDC_{KN_n}$ are 
 \begin{subequations}
\begin{align*}
\delta(1,i) &\leq \delta(1,i+1)\label{li}\tag{left inequalities} \text{ where } i \in [2,n-1], \\
\delta(i,n)&\leq \delta(i-1,n)\label{ri} \tag{right inequalities} \text{ where } i \in [2,n-1], \\
\delta(i-1,i+1)&\leq \delta(i-1,i) +\delta(i,i+1) \label{ti}\tag{triangle inequalities} \text{ where } i \in [2,n-1], \\
\delta(i,j)+\delta(i-1,j+1) &\leq \delta(i,j+1) +\delta(i-1,j) \label{ci} \tag{covering inequalities} \text{ where } i<j \in [2,n-1]. 
\end{align*}
 \end{subequations}
\end{theorem}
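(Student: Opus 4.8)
The plan is to realize $EDC_{KN_n}$ as the linear image of a section of the nonnegative orthant and then read the facets off from the coordinate inequalities $a_{[k,l]}\ge 0$. First I would parametrize the weights by intervals: for $1\le k\le l\le n$ with $[k,l]\neq[1,n]$, let $a_{[k,l]}$ be the weight of the split whose non-root side is $\{k,\dots,l\}$ (so $[k,k]$ is the trivial split at leaf $k$), and write $I$ for this index set, $|I|=\binom{n+1}{2}-1$. Then $\delta(0,i)=\sum_{[k,l]\ni i}a_{[k,l]}$ and, for $i<j$, $\delta(i,j)$ is the total weight of intervals meeting exactly one of $i,j$. A short computation shows the equidistant conditions $\delta(0,i)=\delta(0,i+1)$ are equivalent to $U_i=V_{i+1}$ for $i=1,\dots,n-1$, where $U_i=\sum_{k\le i}a_{[k,i]}$ is the weight of intervals ending at $i$ and $V_{i+1}=\sum_{l>i}a_{[i+1,l]}$ the weight of intervals starting at $i+1$. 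Let $W\subseteq\mathbb{R}^{I}$ be the subspace these equations cut out, and let $L\colon\mathbf a\mapsto(\delta(i,j))_{i<j}$.

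The engine is a slack computation that establishes validity and, at the same time, attaches each listed inequality to a single weight. Using that $\mathbbm{1}_{A|B}$ is a cut semimetric, the triangle defect $\mathbbm{1}(x,y)+\mathbbm{1}(y,z)-\mathbbm{1}(x,z)$ equals $2$ exactly when the split isolates the middle point, and the four-point defect $\mathbbm{1}(i,j{+}1)+\mathbbm{1}(i{-}1,j)-\mathbbm{1}(i,j)-\mathbbm{1}(i{-}1,j{+}1)$ equals $2$ exactly when the split separates $\{i,j\}$ from $\{i-1,j+1\}$. Carrying this out, and invoking $U_i=V_{i+1}$ for the first two, gives
\[
\delta(1,i{+}1)-\delta(1,i)=2a_{[1,i]},\qquad \delta(i{-}1,n)-\delta(i,n)=2a_{[i,n]},
\]
together with $\delta(i{-}1,i)+\delta(i,i{+}1)-\delta(i{-}1,i{+}1)=2a_{[i,i]}$ and $\delta(i,j{+}1)+\delta(i{-}1,j)-\delta(i,j)-\delta(i{-}1,j{+}1)=2a_{[i,j]}$. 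Each right-hand side is $\ge 0$, so every inequality of Theorem \ref{facets} is valid; I call the intervals occurring here, namely $[1,i]$, $[i,n]$, $[i,i]$, and $[i,j]$ with $2\le i\le j\le n-1$, the \emph{determined} intervals, since each of these weights is now an explicit linear functional of $\delta$.

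Next I would prove completeness by showing $L|_W$ is a linear isomorphism onto $\mathbb{R}^{\binom n2}$. The slack identities recover every determined weight from $\delta$, and the two boundary equations $U_1=V_2$ and $U_{n-1}=V_n$ recover the only remaining weights $a_{[1,1]}$ and $a_{[n,n]}$; hence any $\mathbf a\in W$ is reconstructed from $L(\mathbf a)$, so $L|_W$ is injective. Since equation $i$ of the equidistant system contains the private variable $a_{[1,i]}$ (appearing in no other equation), the $n-1$ equations are independent and $\dim W=\binom{n+1}{2}-1-(n-1)=\binom n2$; an injective map between equal-dimensional spaces is an isomorphism. Therefore $EDC_{KN_n}=L(W\cap\mathbb{R}^{I}_{\ge0})$ is cut out exactly by $\{a_{[k,l]}(\delta)\ge0\}$ over all of $I$, and an explicit positive equidistant weighting (for instance $a_{[k,l]}=g(l)$ with $g$ chosen so that $i\,g(i)=\sum_{l>i}g(l)$, which makes $U_i=V_{i+1}$ automatic) certifies that the cone is full-dimensional. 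Because $a_{[1,1]}(\delta)=\sum_{l\ge2}a_{[2,l]}(\delta)$ and $a_{[n,n]}(\delta)=\sum_{k\le n-1}a_{[k,n-1]}(\delta)$ are nonnegative combinations of determined inequalities, the two boundary inequalities are redundant, leaving exactly the four families.

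Finally I would show none of the four families is redundant, by Farkas duality. A redundant $a_{[k_0,l_0]}\ge0$ would yield an identity $a_{[k_0,l_0]}-\sum_{[k,l]\ne[k_0,l_0]}c_{[k,l]}a_{[k,l]}=\sum_i\mu_i(U_i-V_{i+1})$ with all $c_{[k,l]}\ge0$; setting $\mu_0=\mu_n=0$ and matching coordinates, this means $\mu_b-\mu_{a-1}\le0$ on every interval $[a,b]\neq[k_0,l_0]$ and $\mu_{l_0}-\mu_{k_0-1}=1$. The boundary intervals $[1,b]$ force $\mu_c\le0$ and the intervals $[a,n]$ force $\mu_c\ge0$ for all $c\in[1,n-1]$, so $\mu\equiv0$ whenever $[k_0,l_0]$ is interior, contradicting $\mu_{l_0}-\mu_{k_0-1}=1$; and when $[k_0,l_0]$ is a boundary interval, the same forcing combined with the singleton interval at an endpoint yields the contradictory $\mu_{l_0}-\mu_{l_0-1}=1\le 0$ (symmetrically at the other end). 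The main obstacle is getting the equidistant reduction to $U_i=V_{i+1}$ and the four slack identities exactly right: all the structure of the cone lives in those identities, and it is precisely they that make completeness work (through the recovered inverse) and that generate the sign pattern on which the short irredundancy certificate depends.
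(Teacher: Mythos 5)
Your proposal is correct, and its backbone coincides with the paper's: you derive the same four slack identities (the paper's Lemma \ref{singlevar}, with the left/right cases likewise requiring the equidistant relations, which in your notation read $U_i=V_{i+1}$ and in the paper's are the equalities at the vertices $v_{i,i+1}$), and you verify the same independence of those $n-1$ equalities via the ``private variable'' $a_{[1,i]}$ (the paper's Lemma \ref{weighteq}). Where you diverge is the completeness step: the paper takes an arbitrary valid inequality $c'\boldsymbol\delta\geq 0$, pulls it back to weight coordinates, and invokes the Farkas lemma to write it as $\mu A+\lambda B$ with $B$ the identity, then pushes forward again; you instead observe that the slack identities plus the two boundary equations $U_1=V_2$, $U_{n-1}=V_n$ reconstruct $\mathbf a$ from $\delta$, so $L|_W$ is injective between spaces of equal dimension $\binom{n}{2}$, hence an isomorphism, and the H-description of $EDC_{KN_n}$ is simply the pullback of the coordinate inequalities $a_{[k,l]}\geq 0$ --- a cleaner route that makes Farkas unnecessary for sufficiency. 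Your proposal also supplies two things the paper's proof leaves implicit and which are strictly needed to call the listed inequalities \emph{facets} rather than merely a valid defining system: full-dimensionality (your explicit interior weighting $a_{[k,l]}=g(l)$ with $i\,g(i)=\sum_{l>i}g(l)$), and irredundancy of the four families via the dual certificate with multipliers $\mu$, where the interior intervals force $\mu\equiv 0$ and the boundary intervals are killed by the singleton at the inner endpoint (for $[1,l_0]$ the singleton $[l_0,l_0]$ gives $\mu_{l_0}-\mu_{l_0-1}=1\leq 0$, and symmetrically $[a_0,a_0]$ for $[a_0,n]$); you likewise make explicit, via $a_{[1,1]}=\sum_{l\geq 2}a_{[2,l]}$ and its mirror, why the two trivial-split inequalities $\delta(1,2)\geq 0$ and $\delta(n-1,n)\geq 0$ --- which the paper quietly adjoins when building its matrix $N$ --- are redundant. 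In short: same key lemmas, a different (isomorphism rather than Farkas) sufficiency argument, and a more complete treatment of minimality.
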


 Note that \ref{ti} are the same as \ref{ci} where $i=j$.
The proof of this theorem will require the following lemmas and definitions.

\begin{definition}
Let $N$ be a split system and $I,J\subseteq [n]$. Let the \emph{set of separating splits for  $I$ and $J$}, $S_{I|J}$, be the set of splits $A|B \in N$ for every $i\in I$ and $j \in J$, $i$ is separated from $j$ in $A|B$.
\end{definition}

This definition is important because   
\[
 \delta(i,j)=\sum_{A|B\in S_{i|j}} a_{A|B}.
 \]

\begin{exmp}
    Consider the split system $KN_6$. In this split system, $S_{2|0,5}$ consists of the following splits:
\[
\{03456|12,013456|2,0456|123,0156|234,056|1234,01456|23 
\}
\]
\end{exmp}

The following lemma provides equations to recover the weights 
$\textbf{a}$ from a distance vector $\delta$ in $EDC_{KN_n}$.

\begin{lemma} \label{singlevar}
Let $KN_n$  be a complete circular network on $n$ leaves and root $0$. Let $\delta=\delta_{KN_n,\textbf{a}}$, then following equalities hold for $i,j\in [2,n-1]$:
\begin{eqnarray*}
    \delta(1,i+1)-\delta(1,i)  & = & 2a_{12\dots i|i+1 \dots n 0},   \\
    \delta(i-1,n)-\delta(i,n)  & = &  2a_{01 \dots i-1 |ii+1\dots n},  \\
    \delta(i-1,i) +\delta(i,i+1)-\delta(i-1,i+1)&  =&  2a_{1\dots (i-1) (i+1) \dots n 0| i}, \\
    \delta(i,j+1) +\delta(i-1,j)-\delta(i,j)-\delta(i-1,j+1) &= & 2a_{1\dots i-1 j+1 \dots n 0| i i+1 \dots j-1 j } .
\end{eqnarray*}
\end{lemma}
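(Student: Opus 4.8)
The plan is to read every distance off the defining expansion $\delta(p,q)=\sum_{A|B}a_{A|B}\mathbbm{1}_{A|B}(p,q)$ and to show that each prescribed combination of distances annihilates all but one split weight. It helps to index the splits of $KN_n$ by the part of the partition avoiding the root: each split is recorded by a unique interval $B=\{k,\dots,l\}$ with $1\le k\le l\le n$ and $B\ne\{1,\dots,n\}$, whose weight I denote $a_B$. With this bookkeeping a split separates $p$ and $q$ in $\{0,1,\dots,n\}$ precisely when exactly one of $p,q$ lies in $B$, and crucially $0\notin B$ for every split.

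First I would isolate a single \emph{master identity} and then specialize it four times. For leaves $p_1,p_2,p_3,p_4$ occurring in this cyclic order with $p_2$ the cyclic successor of $p_1$ and $p_4$ the cyclic successor of $p_3$, I claim
\[
\delta(p_2,p_4)+\delta(p_1,p_3)-\delta(p_2,p_3)-\delta(p_1,p_4)=2\,a_{\{p_2,\dots,p_3\}}.
\]
To prove it, fix a block $B$, put $x_p=1$ if $p\in B$ and $x_p=0$ otherwise, and use $\mathbbm{1}_B(p,q)=|x_p-x_q|$ to write the coefficient of $a_B$ on the left as $|x_{p_2}-x_{p_4}|+|x_{p_1}-x_{p_3}|-|x_{p_2}-x_{p_3}|-|x_{p_1}-x_{p_4}|$. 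Since $B$ is a cyclic interval, the pattern $(x_{p_1},x_{p_2},x_{p_3},x_{p_4})$ is a cyclic run of $1$s, and a finite check shows this coefficient vanishes except on the patterns $(0,1,1,0)$ and $(1,0,0,1)$, where it equals $2$. The adjacency of the two pairs pins the corresponding blocks down to $\{p_2,\dots,p_3\}$ and to its cyclic complement $\{p_4,\dots,p_1\}$. The one genuinely delicate point, and the step I expect to be the crux, is discarding the second pattern: the complementary arc $\{p_4,\dots,p_1\}$ always sweeps through the root $0$, so it is not a legal block, and only $\{p_2,\dots,p_3\}$ survives. This is exactly where the rooted structure enters.

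It then remains to specialize. Taking $(p_1,p_2,p_3,p_4)=(i-1,i,j,j+1)$ with $i<j$ in $[2,n-1]$ gives the covering equality, since no index is the root and the surviving block is $\{i,\dots,j\}$; rerunning the same computation with $p_2=p_3=i$ and $\delta(i,i)=0$ yields the triangle equality with surviving block $\{i\}$. For the left equality I would take $(p_1,p_2,p_3,p_4)=(0,1,i,i+1)$, obtaining $\delta(1,i+1)+\delta(0,i)-\delta(1,i)-\delta(0,i+1)=2\,a_{\{1,\dots,i\}}$, after which the equidistant hypothesis $\delta(0,i)=\delta(0,i+1)$ erases the two root terms. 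The right equality is the symmetric choice $(p_1,p_2,p_3,p_4)=(i-1,i,n,0)$, where the surviving block is $\{i,\dots,n\}$ and equidistance cancels $\delta(i,0)$ against $\delta(i-1,0)$. Thus equidistance is used only to remove the root distances in the two boundary identities, while the covering and triangle identities are in fact valid for every circular split system.
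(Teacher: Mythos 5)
Your proof is correct, and it takes a genuinely different route from the paper's. The paper proves the four identities one at a time: for the triangle and covering equalities it tracks, by hand, which separating-split sets $S_{i|j}$ contain or omit the one relevant split, and for the left and right equalities it invokes equidistance indirectly, through the equality of the two weight-sums from the most recent common ancestor of $i$ and $i+1$ down to each leaf. You instead prove a single master four-point identity
\[
\delta(p_2,p_4)+\delta(p_1,p_3)-\delta(p_2,p_3)-\delta(p_1,p_4)=2\,a_{\{p_2,\dots,p_3\}}
\]
by writing each cut semimetric as $\mathbbm{1}_B(p,q)=|x_p-x_q|$ and doing a finite check over the cyclic-run membership patterns, with the adjacency hypotheses pinning the two surviving arcs and rootedness ($0\notin B$ for every split of $KN_n$) discarding the complementary arc $\{p_4,\dots,p_1\}$ --- your identification of that last step as the crux is exactly right, and your pattern check does verify that only $(0,1,1,0)$ and $(1,0,0,1)$ contribute. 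The four claimed equalities then fall out as specializations, including the degenerate $p_2=p_3=i$ case for the triangle identity, which you correctly note requires rerunning the pattern check rather than citing the four-point statement verbatim (a small wording slip: you announce the master identity ``for leaves'' but then substitute the root as $p_1$ or $p_4$; the computation is insensitive to this since $x_0=0$ for every block, so nothing breaks). What your packaging buys is twofold: it replaces the paper's mcra-based equidistance argument with the raw hypothesis $\delta(0,i)=\delta(0,j)$ used only to cancel the two root terms in the boundary identities, avoiding the auxiliary claim about internal vertices being equidistant from the leaves below them; and it makes visible, as the paper's proof does implicitly but never states, that the triangle and covering equalities hold for arbitrary weighted circular split systems with no equidistance assumption at all. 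What the paper's version buys is brevity in each individual case and closer alignment with the $S_{I|J}$ notation reused in Lemma 3.18.
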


Note that these equations give us formulas for every parameter, 
except the trivial splits for the leaves labeled $1$ and $n$. These parameters are not necessary because of the equidistant condition, they can be written in terms of the other weights.

\begin{proof}
We go through the formulas one by one.
First, for the difference $\delta(1,i+1)-\delta(1,i) $, 
since $\delta(1,i+1)=\sum_{A|B\in S_{1|i+1}} a_{A|B}$  and 
$\delta(1,i)= \sum_{A|B\in S_{1|i}} a_{A|B}$, the difference can be rewritten as: 
\[
\delta(1,i+1)-\delta(1,i)= \sum_{A|B\in S_{i+1|i,0}} a_{A|B} + a_{12\dots i|i+1 \dots n 0} - \sum_{C|D\in S_{i|i+1,0,1}} a_{C|D}.\]  
Using the fact that the most recent common ancestor of $i$ and $i+1$ must also be equidistant from $i$ and $i+1$  the following equality must also be true: \\
\[
\sum_{A|B\in S_{i+1|i,0}} a_{A|B} 
 =\sum_{C|D\in S_{i|i+1,0}} a_{C|D}=a_{12\dots i|i+1 \dots n 0} +  \sum_{C|D\in S_{i|i+1,0,1}} a_{C|D}.
 \]
Thus,
  \[ \delta(1,i+1)-\delta(1,i)= 2a_{12\dots i|i+1 \dots n 0} +  \sum_{C|D\in S_{i|i+1,0,1}} a_{C|D} - \sum_{C|D\in S_{i|i+1,0,1}} a_{C|D}=2a_{12\dots i|i+1 \dots n 0}.
  \]  
  
 Using a similar argument and symmetry, we can also derive:
 \[
 \delta(i-1,n)-\delta(i,n)=2a_{01 \dots i-1 |ii+1\dots n}. 
 \]
 
 For the difference  $\delta(i-1,i) +\delta(i,i+1)-\delta(i-1,i+1), $
since $ \delta(i-1,i)= \sum_{A|B\in S_{i-1|i}} a_{A|B}$, $\delta(i,i+1)=\sum_{A|B\in S_{i|i+1}} a_{A|B}$ and  $\delta(i-1,i+1)= \sum_{A|B\in S_{i-1|i+1}} a_{A|B}$, the difference can be rewritten as:
\[
\sum_{A|B\in S_{i-1|i}} a_{A|B}+\sum_{C|D\in S_{i|i+1}} a_{C|D} -\sum_{E|F\in S_{i-1|i+1}} a_{E|F}. 
\]
 Note both of $S_{i-1|i}$ and $S_{i|i+1}$ include the split $1\dots (i-1) (i+1) \dots n 0| i$, which $S_{i-1|i+1}$ does not.   Thus, 
 \[ 
 \delta(i-1,i) +\delta(i,i+1)-\delta(i-1,i+1)=2a_{1\dots (i-1) (i+1) \dots n 0| i}. 
 \]

Lastly, for
$ \delta(i,j+1) +\delta(i-1,j)-\delta(i,j)-\delta(i-1,j+1) $ 
the terms $\delta(i,j)= \sum_{A|B\in S_{i|j}} a_{A|B}$ and $\delta(i-1,j+1)=\sum_{A|B\in S_{i-1|j+1}} a_{A|B}$.  Note that both of $S_{i|j}$ and $S_{i-1|j+1}$  do not include the split  
$1\dots i-1j+1 \dots n 0| ii+1 \dots j-1 j $. Similarly, the terms $\delta(i,j+1)= \sum_{A|B\in S_{i|j+1}} a_{A|B}$ and $\delta(i-1,j)= \sum_{A|B\in S_{i-1|j}} a_{A|B}$. 
Note that both of $S_{i|j+1}$ and $S_{i-1|j}$ do include the split  $1\dots i-1j+1 \dots n 0| ii+1 \dots j-1 j $. Thus the difference becomes:
\[
 \delta(i,j+1) +\delta(i-1,j)-\delta(i,j)-\delta(i-1,j+1)= 2a_{1\dots i-1 j+1 \dots n 0| i i+1 \dots j-1 j }.   \qedhere
 \]
\end{proof}

The following related cone  is parameterized by weights instead of pairwise distances. This cone is introduced because it will be easier to prove the last needed Lemma \ref{weighteq} in this other space.

\begin{definition}
Let $ EDC_{N}$ be the equidistant cone of $N$. For $\delta \in EDC_{N}$, let $\textbf{a}_\delta \in \mathbb{R}^{\binom {n+1} 2 -1}$ be the vector of weights obtained from $\delta$ using the equations in Lemma \ref{singlevar}. Then, \emph{$EDWC_{N}$,  the weighted equidistant cone of $N$ is} :
\[
\{  \textbf{a}:\textbf{a}=\textbf{a}_{\delta} \text{ for some } \delta \in EDC_{N}  \} \subseteq \mathbb{R}^{\binom {n+1} 2 -1}.
\]
\end{definition}
Since the equations in Lemma \ref{singlevar} are a linear map of $EDC_{N}$, $EDWC_{N}$ is a polyhedral cone as well. The distance between leaves can be generalized to distance between arbitrary vertices  in a split network using the previously stated formula \[
 \delta(v,w)=\sum_{A|B\in S_{v|w}} a_{A|B}
 \]
where  $v$ and $w$ are now arbitrary vertices in the split network and  $S_{v|w}$ is the set of spits that are induced by the edge classes that any shortest path between $u$ and $v$ must have. This is the natural generalization of the previous definition of $S_{i|j}$, the set of splits that separates leaves $i$ and $j$.

\begin{lemma}\label{weighteq}
Let $ EDWC_{KN_n}$ be the weighted equidistant cone of $KN_n$. Let $\textbf{a} \in EDWC_{KN_n}$. 
The number of  non-redundant linear equalities between the weights that 
are the entries of the vector $\textbf{a}$ is $n-1$.  That is, $EDWC_{KN_n}$ is contained
in a linear space of codimension $n-1$.
\end{lemma}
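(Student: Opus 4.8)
The plan is to pin down the linear equalities explicitly, show that exactly $n-1$ of them are independent, and then show that nothing beyond these can be forced. Every $\textbf{a} \in EDWC_{KN_n}$ arises from an equidistant $\delta$, so its root distances $\delta(0,i) = \sum_{A|B \in S_{0|i}} a_{A|B}$ are all equal; equivalently $\textbf{a}$ satisfies the $n-1$ linear equations $E_i : \sum_{A|B \in S_{0|i}} a_{A|B} = \sum_{A|B \in S_{0|i+1}} a_{A|B}$ for $i \in [n-1]$. Thus $EDWC_{KN_n}$ is contained in the linear space $V$ cut out by $E_1, \dots, E_{n-1}$, and it remains to prove that these forms are independent and that $V$ is the smallest linear space containing the cone.

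First I would verify independence of the $E_i$ by restricting attention to the coordinates of the $n$ trivial splits $a_{k|\cdots}$, $k \in [n]$. A trivial split $k|\cdots$ separates $0$ from a leaf $m$ if and only if $m=k$, so the coefficient of $a_{k|\cdots}$ in $E_i$ is $\mathbbm{1}[k=i]-\mathbbm{1}[k=i+1]$. Arranged as an $(n-1)\times n$ matrix, the rows are exactly the vectors $e_i - e_{i+1}$, which are linearly independent (they span the sum-zero hyperplane of $\mathbb{R}^n$). Hence $E_1, \dots, E_{n-1}$ are already independent on the trivial-split coordinates alone, so $V$ has codimension exactly $n-1$ and $\dim V = \binom{n+1}{2}-1-(n-1) = \binom{n}{2}$.

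It then remains to show $EDWC_{KN_n}$ spans all of $V$, so that no equality beyond the $E_i$ holds. By Lemma \ref{singlevar} the leaf distances recover every weight except $a_{1|\cdots}$ and $a_{n|\cdots}$, and the equidistant condition pins down those last two; consequently $\delta \mapsto \textbf{a}_\delta$ is injective and $EDWC_{KN_n} = \{\textbf{a} \ge 0 : \textbf{a} \in V\}$, the intersection of $V$ with the nonnegative orthant. I would then exhibit a strictly positive point $\textbf{a}^\ast > 0$ in $V$: assign every diagonal split weight $1$, compute the resulting root distances $\delta(0,k) = D_k$, and set each trivial weight to $a_{k|\cdots} = T - D_k$ with $T = \max_k D_k + 1$. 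Since $a_{k|\cdots}$ is the only trivial split contributing to $\delta(0,k)$ and contributes to no other root distance, this makes all $\delta(0,k)$ equal to $T$ while keeping every coordinate strictly positive. Because $\textbf{a}^\ast > 0$ lies in the relative interior of the cone $V \cap \mathbb{R}^{\ge 0}$, for any direction $d \in V$ both $\textbf{a}^\ast \pm \epsilon d$ are nonnegative for small $\epsilon$, so $d$ lies in the span of $EDWC_{KN_n}$; hence $\mathrm{span}(EDWC_{KN_n}) = V$ and the codimension is exactly $n-1$.

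The step I expect to be the main obstacle is this last one, ruling out additional equalities. The independence count is routine once one isolates the trivial-split coordinates, but proving that the equidistant equations are the \emph{only} relations requires both the identification $EDWC_{KN_n} = V \cap \mathbb{R}^{\ge 0}$, which leans on the weight recovery of Lemma \ref{singlevar}, and the explicit construction of a strictly positive equidistant weighting to witness full dimensionality inside $V$.
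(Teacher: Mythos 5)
Your proof is correct, but it takes a genuinely different route from the paper at the crucial step. You agree with the paper on the easy half: your equations $E_i$ (equality of consecutive root distances $\delta(0,i)=\delta(0,i+1)$) are linearly equivalent to the paper's equalities at the most recent common ancestors $v_{i,i+1}$, obtained by canceling the splits common to $S_{i|0}$ and $S_{i+1|0}$; and your independence argument via the trivial-split coordinates, where the coefficient rows become $e_i - e_{i+1}$, is a cleaner version of the paper's observation that each successive equality introduces a fresh weight. Where you diverge is in ruling out \emph{additional} equalities. The paper argues that every linear equality on $EDWC_{KN_n}$ must be an equidistance relation at some internal vertex $v_{i,j}$ (since the Kalmanson conditions contribute no equalities), and then shows each such relation is a sum of the consecutive ones via the decomposition $S_{i|j,0} = \bigcup_{i'=i}^{j-1} S_{i'|i'+1}$. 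You instead identify $EDWC_{KN_n} = V \cap \mathbb{R}_{\geq 0}$ and exhibit a strictly positive point of $V$ — all diagonal weights $1$, trivial weights $T - D_k$ — so that the cone has nonempty interior relative to $V$ and hence spans it. Your interior-point construction is arguably the more airtight argument: the paper's claim that \emph{every} equality is generated by an MRCA condition is asserted by appeal to the parametrization rather than proved, whereas your argument needs no classification of candidate equalities at all. The one spot where you lean on the same informal step as the paper is the identification $EDWC_{KN_n} = V \cap \mathbb{R}_{\geq 0}$: strictly, you should note that any nonnegative $\textbf{a} \in V$ induces a $\delta \in EDC_{KN_n}$ and agrees with $\textbf{a}_\delta$ by Lemma \ref{singlevar} plus the pinning of $a_{1|\cdots}$ and $a_{n|\cdots}$ by equidistance, but this is exactly the remark the paper itself makes after Lemma \ref{singlevar}, so it is a one-line addition rather than a gap. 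What each approach buys: the paper's decomposition of $S_{i|j,0}$ into consecutive blocks is reused conceptually elsewhere (it reflects the poset structure of the network), while your argument generalizes more readily to showing full-dimensionality of analogous weight cones where classifying all equalities would be awkward.
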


\begin{proof}
First, we claim that every equality between the weights $a_{A|B} \in \textbf{a}$ is generated by some 
internal vertex and its distance to the leaves below it. This is because $\textbf{a}\in EDWC_{KN_n}$ and $EDWC_{KN_n}$ is an alternate parametrization of $EDC_{KN_n}$. Therefore, since $EDC_{KN_n}$ is  generated by the Kalmanson condition inequalities, shown in Theorem \ref{Kalmanson}
and the equidistant condition (for all $i,j\in [n]$, $\delta_{N,\textbf{a}}(0,i)= \delta_{N,\textbf{a}}(0,j)$), $EDWC_{KN_n}$ is governed by these same inequalities and equalities. Translating these equations to be among the weights $a_{A|B} \in \textbf{a}$, all of the equalities are generated by one of the equidistant conditions, since the Kalmanson condition itself has no equalities. Thus, since the equidistant condition can be instead thought of as a series of equalities generated by  some 
internal vertex and its distance to the leaves below it, the claim is true. 

Let $v_{i,j}$ be the most recent common ancestor of $i$ and $j$ with $i,j\in [n]$.  
The distance from $v_{i,j}$ to $i$ is 
\[
\sum_{A|B \in S_{i|j,0}} a_{A|B}.
\]
The distance from $v_{i,j}$ to $j$ is 
\[
\sum_{C|D \in S_{j|i,0}} a_{C|D}.
\]
Thus since $\delta(v_{i,j}, i) =  \delta(v_{i,j}, i)$, we have:
\begin{equation}\label{eq:}
    \sum_{A|B \in S_{i|j,0}} a_{A|B}=\sum_{C|D \in S_{j|i,0}} a_{C|D}.
\end{equation}
Consider the equality generated by each $v_{i,i+1}$ for all $i\in [n-1]$. They look like:
\[
\sum_{A|B \in S_{i|i+1,0}} a_{A|B}=\sum_{C|D \in S_{i+1|i,0}} a_{C|D}.
\]
There are $n-1$ of them and no combination of them makes any of the others redundant. 
Ordering these from vertices $v_{1,2}, v_{2,3}, \ldots$, we see that adding a new equality
to the list in order involves at least one new $a_{A|B}$ term that does not appear in prior
equalities.

Thus it just remains to be shown that any  equality   between the weights 
$a_{A|B} \in \textbf{a}$ can be written as a combination of those generated by $v_{i,i+1}$ for $i\in [n-1]$. Consider again the equality generated by $v_{i,j}$ which is:
\[
\sum_{A|B \in S_{i|j,0}} a_{A|B}=\sum_{C|D \in S_{j|i,0}} a_{C|D}.
\]
Then since, \[
S_{i|j,0}= \bigcup_{i'=i}^{i'=j-1} S_{i'|i'+1} \]
 and \[
 S_{j|i,0}= \bigcup_{i'=j}^{i'=i+1} S_{i'|i'-1},
  \]
the equality \[
\sum_{A|B \in S_{i|j,0}} a_{A|B}=\sum_{C|D \in S_{j|i,0}} a_{C|D},
\] is a combination  of the equalities generated by $v_{i',i'+1}$ for $i' \in [i,j-1]$, namely:
\[
\sum_{A|B \in S_{i|j,0}} a_{A|B}=\sum^{j-1}_{i'=i}\sum_{A|B \in S_{i'|i'+1}} a_{A|B}=\sum^{j}_{j'=i+1}\sum_{C|D \in S_{j'|j'-1}} a_{C|D}= \sum_{C|D \in S_{j|i,0}} a_{C|D}.
\]
\end{proof}

\begin{proof}[Proof of Theorem \ref{facets}]
Let $c'\in \mathbb{R}^{\binom n 2}$ and let
  \begin{align}
    \boldsymbol\delta &= \begin{bmatrix}
           \delta(1,2) \\
          \delta(1,3) \\
           \vdots \\
           \delta(n-1,n)
         \end{bmatrix}\in  EDC_{KN_n} \subseteq \mathbb{R}^{\binom n 2}
  \end{align}
  be a vector of pairwise distances for a distance function $\delta$. Let  $c'\boldsymbol\delta \geq 0$ for all $\boldsymbol\delta\in EDC_{KN_n}$ be an arbitrary valid inequality on $EDC_{KN_n}$. Let $M$ be a $\binom n 2$ by $\binom {n+1} 2 -1$ matrix whose rows are indexed by pairwise distances $\delta(i,j)$ with $i,j\in [n]$ and the columns are indexed by weights $a_{A|B}$ for a split $A|B \in KN_n$. Let the $(\delta(i,j),a_{A|B})$th entry of $M$ be $\mathbbm{1}_{A|B}(i,j)$. The matrix $M$ is a transformation from coordinates in  pairwise distances $\delta(i,j)$ to coordinates in weights $a_{A|B}$ based on the parameterization of $KN_n$. 
  
  Let $\textbf{a} \in EDWC_{KN_n}$. Let $c'M=c$, then  $c\textbf{a}\geq 0$ is a valid inequality for all $ \textbf{a} \in EDWC_{KN_n}$. $ EDWC_{KN_n}$ can be defined by $A\textbf{a}=0$ and $B\textbf{a}\geq 0$ where $A$ is a $n-1$ by ${\binom {n+1} 2 -1}$ matrix whose rows are the coefficients of the non-redundant equidistant equalities of $KN_n$ from Lemma \ref{weighteq} and $B= I_{\binom {n+1} 2 -1}$ which is from the non-negativity condition on the weights. Thus by 
 the Farkas Lemma, there exists a $\mu \in \mathbb{R}^{n-1}$ and a $\lambda \in {\mathbb{R}_{\geq 0}}^{\binom {n+1} 2 -1}$ such that $$c= \mu A  + \lambda B.$$
Consider $ c-\mu A= \lambda B.$ 
 Since $B= I_{\binom {n+1} 2 -1}$, $\lambda B= \lambda \in {\mathbb{R}_{\geq 0}}^{\binom {n+1} 2 -1}$. Thus  $ (c-\mu A) \textbf{a}=\lambda \textbf{a}$ is a non-negative sum of the weights $a_{A|B}$. From Lemma \ref{singlevar} each of the facets in Theorem \ref{facets}  can be reduced to be $2a_{A|B}\geq 0$ for some $A|B\in KN_n$. Thus $(c-\mu A) \textbf{a}$ can be written as a non-negative sum of the facet inequalities. 
 
  Let $N$ be a $\binom {n+1} 2 -1$ by $\binom n 2$ matrix whose rows are indexed by weights $a_{A|B}$ for a split $A|B$ and whose columns are indexed by pairwise distances $\delta(i,j)$ with $i,j\in [n]$.  Let the $(a_{A|B},\delta(i,j))$th entry of $N$ be the coefficient of $\delta(i,j)$ in the facet inequality that can be reduced to $2a_{A|B} \geq 0$ by Lemma \ref{singlevar}. Note that for $a_{2 \dots n 0| 1}$ and $a_{1 2 \dots n-1 0| n}$ there is no facet inequality but $\delta(1,2) =2a_{2 \dots n 0| 1} \geq 0$ and $\delta(n-1,n) =2a_{1 2 \dots n-1 0| n} \geq 0$ can be used. This matrix is a transformation from coordinates in weights $a_{A|B}$ to coordinates in pairwise distances $\delta(i,j)$. Thus $c N = c'$. Now consider $(\mu A N) \boldsymbol\delta $ which reduces to $$\mu A N \boldsymbol\delta= \mu A \textbf{a} = 0. $$ Now consider again $ (c-\mu A) \textbf{a}$. It can be reduced to \[(c-\mu A) \textbf{a}=(c-\mu A) N\boldsymbol\delta= c N\boldsymbol\delta- \mu A N\boldsymbol\delta= c N\boldsymbol\delta= c'\boldsymbol\delta.\] Thus since it was already shown that $(c-\mu A) \textbf{a}$ can be written as a non-negative sum of the facet inequalities $c'\delta$ can be as well.  Since $c'\boldsymbol\delta \geq 0$ was an arbitrary inequality so the proof is done. 
\end{proof}

\begin{exmp}

     Consider the cone $EDC_{KN_5}$. The facets of this cone are: 
     \begin{align*}
         & \text{Left inequalities: }\delta(1,2) \leq \delta(1,3) \leq \delta(1,4) \leq \delta(1,5); \\
          &\text{Right inequalities: }  \delta(4,5) \leq \delta(3,5) \leq \delta(2,5) \leq \delta(1,5);\\
           &\text{Triangle inequalities: }\delta(1,3)\leq \delta(1,2) +\delta(2,3)  ; \; \; \delta(2,4)\leq \delta(2,3) +\delta(3,4)  ;  \\
           & \delta(3,5)\leq \delta(3,4) +\delta(4,5); 
           \quad   \text{Covering inequalities: } 
           \delta(2,3)+\delta(1,4) \leq \delta(1,3) +\delta(2,4)  ; \\ &\delta(3,4)+\delta(2,5) \leq \delta(2,4) +\delta(3,5)   ; 
           \delta(2,4)+\delta(1,5) \leq \delta(1,4) +\delta(2,5)   ;  \\         
     \end{align*}
\end{exmp}

Now, using the characterization of the facets of $EDC_{KN_n}$, the equidistant cone for any circular split system can be described. Every other split system, $N$, is a subset of $KN_n$ and can be obtained by removing the splits not in $N$ from $KN_n$.

\begin{corollary}\label{face}
Let  $N$ be a rooted circular split system on $n$ leaves and root $0$. Then $EDC_N$ is a face of $EDC_{KN_n}$.
Furthermore, every face of $EDC_{KN_n}$ is of the form $EDC_N$ for some subnetwork $N$.
\end{corollary}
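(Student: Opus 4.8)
The plan is to realize $EDC_N$ as the locus inside $EDC_{KN_n}$ where the weights of the splits absent from $N$ vanish, and then to recognize each such vanishing as the tightening of a valid inequality. First I would record that a distance function $\delta_{N,\mathbf{a}}$ arising from $N$ is identical to $\delta_{KN_n,\tilde{\mathbf{a}}}$, where $\tilde{\mathbf{a}}$ extends $\mathbf{a}$ by weight $0$ on each split of $KN_n \setminus N$. Conversely, Lemma \ref{singlevar} recovers the weight of every split other than $2\dots n 0\mid 1$ and $1\,2\dots (n-1)0\mid n$ uniquely from $\delta$, while the equidistant condition forces the two remaining weights to equal $\tfrac12\delta(1,2)$ and $\tfrac12\delta(n-1,n)$; hence the weight vector $\mathbf{a}_\delta$ is determined by $\delta$. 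Writing $(\mathbf{a}_\delta)_{A|B}$ for the recovered weight of $A|B$, these observations give
\[
EDC_N = \{\, \delta \in EDC_{KN_n} : (\mathbf{a}_\delta)_{A|B} = 0 \text{ for all } A|B \in KN_n \setminus N \,\}.
\]

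Next I would read each condition $(\mathbf{a}_\delta)_{A|B} = 0$ geometrically using Lemma \ref{singlevar} and Theorem \ref{facets}. For every split $A|B$ apart from the two exceptional trivial splits, $2(\mathbf{a}_\delta)_{A|B}$ is precisely the slack of one of the left, right, triangle, or covering facet inequalities, so $(\mathbf{a}_\delta)_{A|B} = 0$ asserts that this facet inequality holds with equality. For the two exceptional splits, $2(\mathbf{a}_\delta)_{A|B}$ equals $\delta(1,2)$ or $\delta(n-1,n)$, and since these are nonnegative throughout $EDC_{KN_n}$, the hyperplanes $\{\delta(1,2)=0\}$ and $\{\delta(n-1,n)=0\}$ support the cone. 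In all cases the displayed set is an intersection of $EDC_{KN_n}$ with supporting hyperplanes, hence a face, which establishes the first assertion.

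For the converse I would invoke the standard fact that in a pointed polyhedral cone every nonempty face is the intersection of the facets containing it; here $EDC_{KN_n}$ is pointed because $EDC_{KN_n} \subseteq \mathbb{R}^{\binom n2}_{\ge 0}$. Given a face $F$, let $\mathcal{T}$ be the set of splits indexing the facets that contain $F$ (with $\mathcal{T} = \varnothing$ when $F = EDC_{KN_n}$, and $\mathcal{T}$ equal to all facet splits when $F$ is the apex). Then $F = \{\delta \in EDC_{KN_n} : (\mathbf{a}_\delta)_{A|B}=0 \text{ for } A|B \in \mathcal{T}\}$, and putting $N := KN_n \setminus \mathcal{T}$ yields a circular split system, since any subset of the circular system $KN_n$ is again circular. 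By the characterization from the first step, $F = EDC_N$.

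I expect the main obstacle to lie in the first paragraph: establishing that the weight-recovery map $\delta \mapsto \mathbf{a}_\delta$ is a genuine bijection between $EDC_{KN_n}$ and its weight coordinates, with the two trivial splits on leaves $1$ and $n$ correctly accounted for even though they do not appear among the facets. Once this bijection and the facet-to-weight dictionary of Lemma \ref{singlevar} are in place, both inclusions reduce to translating between ``the split $A|B$ is absent from $N$'' and ``its associated facet or supporting hyperplane is tight,'' and the polyhedral converse is then routine.
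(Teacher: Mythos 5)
Your proof is correct, and for the first assertion it uses the same mechanism as the paper: translate ``the split $A|B$ is absent from $N$'' into ``the inequality with slack $2a_{A|B}$ from Lemma \ref{singlevar} is tight,'' so that $EDC_N$ is obtained by intersecting $EDC_{KN_n}$ with supporting hyperplanes. But your write-up is genuinely more complete than the paper's in two respects. First, the paper's proof of Corollary \ref{face} implicitly treats every deleted split as corresponding to a facet, yet the trivial splits $2\cdots n\,0\mid 1$ and $1\cdots(n-1)\,0\mid n$ have no associated facet in Theorem \ref{facets}; you handle them correctly via $2a_{2\cdots n0|1}=\delta(1,2)$ and $2a_{1\cdots(n-1)0|n}=\delta(n-1,n)$ (identities the paper only records inside the proof of Theorem \ref{facets}) together with the observation that $\delta(1,2)\geq 0$ and $\delta(n-1,n)\geq 0$ are valid, hence their zero sets still cut out faces. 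Second, the paper's proof argues only the forward direction; the converse claim that every face of $EDC_{KN_n}$ has the form $EDC_N$ is left unargued, whereas you derive it from the standard fact that each nonempty face of a pointed polyhedral cone is the intersection of the facets containing it, plus the bijection (via Lemma \ref{singlevar}) between facets and the non-exceptional splits, and the closure of circularity under taking subsets of splits. The point you flag as the main obstacle --- injectivity of the recovery map $\delta\mapsto\mathbf{a}_\delta$ on equidistant points --- does hold: Lemma \ref{singlevar} determines every weight except the two exceptional ones linearly from $\delta$, and the equidistant conditions $\delta(0,1)=\delta(0,2)$ and $\delta(0,n-1)=\delta(0,n)$ force the remaining two to be $\tfrac12\delta(1,2)$ and $\tfrac12\delta(n-1,n)$, exactly as you claim; so your argument stands as a completed version of the paper's.
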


\begin{proof}
A way to construct $N$ from $KN_n$  is to start with $KN_n$
and set all weights for splits not in $N$ to $0$. 
However, by Lemma \ref{singlevar}, this means there is equality on all  facets in $KN_n$  that are not  in $N$. Thus we can describe $N$ by using all of the facet inequalities from $KN_n$ but making the facets that can be rewritten using Lemma \ref{singlevar} as $2a_{A|B}\geq 0$ with $A|B\notin N$ into equalities. Then the cone $EDC_N$ will be a face of $EDC_{KN_n}$ as desired.
\end{proof}

\begin{exmp}
    
 Consider the split system $N=\{04|123,012|34,014|23\} \cup \{ i | [4] \setminus i : i \in [4]  \}$. Then $EDC_{N}$ is a face of $EDC_{KN_4}$ and is generated by the following inequities and equalities:
     \begin{align*}
         & \text{Left inequalities: }\delta(1,2) = \delta(1,3) \leq \delta(1,4)  ;\\
          &\text{Right inequalities: }   \delta(3,4) \leq \delta(2,4) = \delta(1,4);\\
           &\text{Triangle inequalities: }\delta(1,3)\leq \delta(1,2) +\delta(2,3)  ; \; \; \delta(2,4)\leq \delta(2,3) +\delta(3,4)  ;   \\
           &\text{Covering inequalities: } 
           \delta(2,3)+\delta(1,4) \leq \delta(1,3) +\delta(2,4)  ; \; \; \\
     \end{align*}
     Note that $\delta(1,2) = \delta(1,3)$ and $\delta(2,4) = \delta(1,4)$ since $ 034|12,01|234 \notin N $, respectively.
\end{exmp}


\section{Extreme Rays of $EDC_{KN_n}$ }  \label{sec:extremerays}

The goal of this section is to describe the extreme rays of $EDC_{KN_n}$. 
The extreme rays of $EDC_{KN_n}$ can be described using a particular kind of set partition 
of $[n]$ which will be defined below. As an application,
we also characterize which extreme rays lie on which facets of $EDC_{KN_n}$,
which, given Corollary \ref{face}, will also characterize the facets and extreme 
rays for $EDC_{N}$ for arbitrary networks.

\begin{definition}
Let $\tau= t_1| \dots |t_k$ be a set partition of $[n]$. 
We let $r_{\tau}$ be the vector whose coordinates, $r_{\tau}(i,j)$, are
\begin{equation}
r_{\tau}(i,j) = \begin{cases}
  0  &  \text{ if there exists some } l \in [k] \text{ such that } i,j \in t_l \\
  1 &  \text{ otherwise.}
\end{cases}
\end{equation}
\end{definition}

\begin{definition}
   A \emph{fixed order set partition of $[n]$}, $\tau= t_1| \dots |t_k$, is a set partition such that for all  $l \in [k]$, $t_l= [i,j] = \{i, i+1, \ldots, j\}$ for some $i,j \in [n]$.
Let 
\[
R_n=  \{  r_{\tau}\neq 0: \tau \text{ is a fixed order set partition of } [n]  \}.
\]  
 \end{definition}
 
Note that $R_n$ has $2^{n-1} - 1$ elements since there are $2^{n-1}$ fixed order
set partitions of $n$, and all but one of $r_\tau$ are nonzero.

\begin{exmp}
    The fixed order set partitions of $[5]$ are the following: 
    \begin{align*} &1|2345,12|345,123|45,1234|5,1|2|345,1|23|45,1|234|5, 12|3|45,12|34|5,123|4|5,\\
    &
    1|2|3|45,1|2|34|5,1|23|4|5,12|3|4|5,1|2|3|4|5
    \end{align*} 
    We omit $\tau = 12345$, the partition with one block,  since $r_\tau$ is just the
    zero vector.
The set of rays, $R_5$, corresponding to these set partitions are, respectively:
     \begin{align*}
   &\begin{bmatrix}
1 & 1 & 1 & 1 \\
 & 0 & 0 & 0\\
 &  & 0 & 0\\
&  &  & 0\\
\end{bmatrix}, 
\begin{bmatrix}
0 & 1 & 1 & 1 \\
 & 1 & 1 & 1\\
 &  & 0 & 0\\
&  &  & 0\\
\end{bmatrix}, 
\begin{bmatrix}
0 & 0 & 1 & 1 \\
 & 0 & 1 & 1\\
 &  & 1 & 1\\
&  &  & 0\\
\end{bmatrix}, 
\begin{bmatrix}
0 & 0 & 0 & 1 \\
 & 0 & 0 & 1\\
 &  & 0 & 1\\
&  &  & 1\\
\end{bmatrix}, 
\begin{bmatrix}
1 & 1 & 1 & 1 \\
 & 1 & 1 & 1\\
 &  & 0 & 0\\
&  &  & 0\\
\end{bmatrix},
\end{align*} 

\begin{align*}
&\begin{bmatrix}
1 & 1 & 1 & 1 \\
 & 0 & 1 & 1\\
 &  & 1 & 1\\
&  &  & 0\\
\end{bmatrix}, \begin{bmatrix}
1 & 1 & 1 & 1 \\
 & 0 & 0 & 1\\
 &  & 0 & 1\\
&  &  & 1\\
\end{bmatrix}, \begin{bmatrix}
0 & 1 & 1 & 1 \\
 & 1 & 1 & 1\\
 &  & 1 & 1\\
&  &  & 0\\
\end{bmatrix}, \begin{bmatrix}
0 & 1 & 1 & 1 \\
 & 1 & 1 & 1\\
 &  & 0 & 1\\
&  &  & 1\\
\end{bmatrix}, \begin{bmatrix}
0 & 0 & 1 & 1 \\
 & 0 & 1 & 1\\
 &  & 1 & 1\\
&  &  & 1\\
\end{bmatrix},
\end{align*}

\begin{align*}
&\begin{bmatrix}
1 & 1 & 1 & 1 \\
 & 1 & 1 & 1\\
 &  & 1 & 1\\
&  &  & 0\\
\end{bmatrix}, \begin{bmatrix}
1 & 1 & 1 & 1 \\
 & 1 & 1 & 1\\
 &  & 0 & 1\\
&  &  & 1\\
\end{bmatrix}, \begin{bmatrix}
1 & 1 & 1 & 1 \\
 & 0 & 1 & 1\\
 &  & 1 & 1\\
&  &  & 1\\
\end{bmatrix}, \begin{bmatrix}
0 & 1 & 1 & 1 \\
 & 1 & 1 & 1\\
 &  & 1 & 1\\
&  &  & 1\\
\end{bmatrix}, \begin{bmatrix}
1 & 1 & 1 & 1 \\
 & 1 & 1 & 1\\
 &  & 1 & 1\\
&  &  & 1\\
\end{bmatrix}.\\
     \end{align*}
\end{exmp}

\begin{lemma}\label{part1}
 If  $\tau$ is a fixed order set partition, then  $r_\tau \in EDC_{KN_n}$.
 \end{lemma}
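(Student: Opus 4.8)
The plan is to read off membership in the cone directly from the facet description already obtained. By Theorem~\ref{facets}, and in particular by its proof (which shows every valid inequality on $EDC_{KN_n}$ is a nonnegative combination of the listed ones, so that no implicit equalities remain in $\delta$-coordinates), the cone $EDC_{KN_n}$ is cut out inside $\mathbb{R}^{\binom n 2}$ precisely by the left, right, triangle, and covering inequalities. Hence it suffices to check that $r_\tau$ satisfies all four families; the auxiliary bounds $r_\tau(1,2)\ge 0$ and $r_\tau(n-1,n)\ge 0$ are automatic since $r_\tau$ is $\{0,1\}$-valued. So the entire task reduces to an inequality verification.

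The first thing I would record is an interval description of $r_\tau$ that exploits the fixed-order hypothesis. Writing $\tau=t_1|\dots|t_k$ with each block an interval, let $C=\{m\in[n-1]:m\text{ and }m+1\text{ lie in different blocks}\}$ be the set of cut positions. Then for $a<b$ one has $r_\tau(a,b)=0$ exactly when $a$ and $b$ lie in a common block, which, because the blocks are intervals, happens precisely when $C\cap[a,b-1]=\emptyset$; equivalently $r_\tau(a,b)=\mathbbm{1}[\,C\cap[a,b-1]\neq\emptyset\,]$. In particular the block of $1$ is $[1,b_1]$, so $r_\tau(1,i)=\mathbbm{1}[i>b_1]$ is nondecreasing in $i$, which gives the left inequalities; symmetrically the block of $n$ is $[a_k,n]$, so $r_\tau(i,n)=\mathbbm{1}[i<a_k]$ is nonincreasing in $i$, giving the right inequalities. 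For the triangle inequalities I would use $\{0,1\}$-valuedness: if $r_\tau(i-1,i+1)=1$ then $i-1$ and $i+1$ are in different blocks, so at least one of $\{i-1,i\}$, $\{i,i+1\}$ crosses a block boundary and the right-hand side is at least $1$.

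The substantive step, and the one where I expect the only real work, is the covering inequality $r_\tau(i,j)+r_\tau(i-1,j+1)\le r_\tau(i,j+1)+r_\tau(i-1,j)$ for $i<j\in[2,n-1]$. Using the cut description, I would set $\alpha=\mathbbm{1}[\,C\cap[i,j-1]\neq\emptyset\,]$, $p=\mathbbm{1}[\,i-1\in C\,]$, and $q=\mathbbm{1}[\,j\in C\,]$ (both $i-1$ and $j$ are legitimate cut positions in $[n-1]$ under the index restrictions). Splitting the relevant intervals as $[i-1,j]=\{i-1\}\cup[i,j-1]\cup\{j\}$, $[i,j]=[i,j-1]\cup\{j\}$, and $[i-1,j-1]=\{i-1\}\cup[i,j-1]$ yields $r_\tau(i,j)=\alpha$, $r_\tau(i-1,j+1)=\max(p,\alpha,q)$, $r_\tau(i,j+1)=\max(\alpha,q)$, and $r_\tau(i-1,j)=\max(p,\alpha)$, so the claim becomes $\alpha+\max(p,\alpha,q)\le\max(\alpha,q)+\max(p,\alpha)$. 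A two-case check finishes it: when $\alpha=1$ both sides equal $2$, and when $\alpha=0$ it reduces to $\max(p,q)\le p+q$, which holds for $p,q\in\{0,1\}$.

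The only delicate point is this covering inequality, and it is exactly where the fixed-order (interval) hypothesis is essential; note that for a partition with a non-interval block the left/right monotonicity already fails, so none of the steps survives for arbitrary set partitions. Everything else is bookkeeping with the $\{0,1\}$-valued cut indicator, so in the writeup I would establish the interval description once and then dispatch the four families in the order above.
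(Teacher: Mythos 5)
Your proposal is correct and takes essentially the same approach as the paper: both reduce membership to checking the left, right, triangle, and covering inequalities of Theorem \ref{facets} on the $\{0,1\}$-vector $r_\tau$, and both settle the covering case by expressing $r_\tau(i-1,j+1)$, $r_\tau(i,j+1)$, $r_\tau(i-1,j)$ as maxima over $r_\tau(i,j)$, $r_\tau(i-1,i)$, $r_\tau(j,j+1)$ and splitting on whether $r_\tau(i,j)$ equals $0$ or $1$. Your cut-set indicator $C$ is merely a cleaner bookkeeping device for the same case analysis, and your explicit remark that Theorem \ref{facets} leaves no implicit equalities makes precise a step the paper's proof uses tacitly.
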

 
\begin{proof}
Let $\tau= t_1| \dots |t_k$ be a fixed order set partition. 
We must show that  $r_\tau$ satisfies all the inequalities from Theorem \ref{facets}. 

For the left inequalities, $\delta(1,i) \leq \delta(1,i+1)$ with $i\in [2,n-1]$ 
consider if there exists $l\in [k]$  such that $1,i\in t_l$. 
If this does exist then $\delta(1,i)=0$ and so the facet will be $0\leq \delta(1,i+1)$ which is always satisfied. If it does not exist then $i+1$ must also not be in the same block as $1$, since $\tau$ is a fixed order set partition, and thus $\delta(1,i)=1$ and $\delta(1,i+1)=1$ so the facet will become $1\leq 1$.
 
The right inequalities, $\delta(i,n)\leq \delta(i-1,n)$, follow by symmetry. 
 
 For the triangle inequalities, $\delta(i-1,i+1)\leq \delta(i-1,i) +\delta(i,i+1)$, consider whether there exists $l\in [k]$  such that $i-1,i+1\in t_l$. If it exists then $\delta(i-1,i+1)=0$. Additionally, it must be that, $i\in t_l$  as well so $\delta(i-1,i)=0$ and $\delta(i,i+1)=0$, and the facet becomes $0 \leq 0$.
 If it does not exist then $\delta(i-1,i+1)=1$ and $\delta(i-1,i) +\delta(i,i+1)$ is $1$ or $2$, depending on if $i$ is separated from both $i-1$ and $i+1$ or not. Thus the facet becomes $1 \leq 1 $ or $1 \leq2$.
 
 For the covering inequalities $\delta(i,j)+\delta(i-1,j+1) \leq \delta(i,j+1) +\delta(i-1,j)$ with  $i,j\in [2,n-1]$, because the $r_\tau$ only have $0$ or $1$ entries, each distance must be either $1$ or $0$. We can break up these distances into just the relevant entries of the vector $r_\tau$. 
 For this argument, it is useful to note that if some entries of $r_\tau$ equal $1$, then certain
 others must equal $1$ also.  So for example, we can write $r_\tau(i-1, j+1) = \max( r_\tau(i-1, i), r_\tau(i,j), r_\tau(j, j+1) )$.  Using this observation, 
 the inequality $\delta(i,j)+\delta(i-1,j+1) \leq \delta(i,j+1) +\delta(i-1,j)$, when applied to $r_\tau$ can be rewritten as
\begin{align*}
r_\tau(i,j) + \max(r_\tau(i,j),  r_\tau(i-1,i),  r_\tau(j,j+1))\\
\leq \max(r_\tau(i,j)  ,  r_\tau(j,j+1)) + \max(r_\tau(i,j) ,  r_\tau(i-1,i) ).
\end{align*}
If $r_\tau(i,j)=1$ then the inequality evaluates to $2 \leq 2$.
If $r_\tau(i,j)=0$ then the  inequality evaluates to 
$  \max(r_\tau(i-1,i), r_\tau(j,j+1)) \leq  r_\tau(i-1,i) + r_\tau(j,j+1)$ which is always satisfied. 
This proves that the rays in $R_n\subseteq EDC_{KN_n}$. 
\end{proof}

Our next step will be to prove that  $EDC_{KN_n} \subseteq \mathrm{cone}(R_n)$.
First,  we will describe a general strategy to show that a given candidate set of
extreme rays actually generates a cone.

\begin{lemma}\label{raymethod}
    Let $C=\{x:  Ax\geq 0 \}$ be a polyhedral cone where 
    \begin{align}
    A &= \begin{pmatrix}
           a_1 \\
           \vdots \\
           a_k
         \end{pmatrix}
         \end{align}
         and let $V=\{v_1, \dots, v_n \}$ be a set of vectors in $\mathbb{R}^k$. 
         A subset  $S\subseteq [k]$  is called valid for $C$ if there exists some 
         $y\in C$ such that $a_iy=0$ for $i \in S$ and $a_jy>0$ for all $ j \notin S$.
         
         If for each valid $S$ there exists $v_l \in V$ such that $a_iv_l=0 \text{ for all } i \in S$, 
         then $C\subseteq \cone(V)$.
\end{lemma}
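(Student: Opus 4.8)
The plan is to organize the proof around the face structure that the valid sets $S$ encode, and to induct on the dimension of the corresponding faces. For $S \subseteq [k]$ write $F_S = \{y \in C : a_i y = 0 \text{ for all } i \in S\}$ and $L_S = \{z : a_i z = 0 \text{ for all } i \in S\}$, so that $F_S = C \cap L_S$ is a face of $C$, and $S$ is valid precisely when $F_S$ contains a point $y^\ast$ whose only tight inequalities are those indexed by $S$, i.e. a relative interior point of $F_S$. Since every $y \in C$ has $a_i y = 0$ on exactly one valid set $S(y)$ and then lies in $\operatorname{relint} F_{S(y)}$, and since $C$ itself equals the face $F_{S_0}$ associated with its minimal valid set $S_0 = \{i : a_i z = 0 \text{ for all } z \in C\}$, it suffices to prove the single statement: for every valid $S$, $F_S \subseteq \cone(V)$. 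I would prove this by induction on $\dim F_S$. Throughout I use that $V \subseteq C$ (in the application $V = R_n \subseteq EDC_{KN_n}$ by Lemma \ref{part1}) and that $C$ is pointed (it lies in the nonnegative orthant, as its coordinates are distances); these two facts are exactly what make the witness vectors usable, and I would flag them as essential rather than cosmetic.

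The key preliminary step is to identify the linear hull of a face: for valid $S$ I claim $\operatorname{span} F_S = L_S$. The inclusion $\subseteq$ is immediate. For $\supseteq$, choose $y^\ast \in \operatorname{relint} F_S$, so $a_j y^\ast > 0$ for every $j \notin S$; then for each $z \in L_S$ of small enough norm the point $y^\ast + z$ still satisfies $a_j(y^\ast + z) > 0$ for $j \notin S$ and $a_i(y^\ast + z) = 0$ for $i \in S$, hence $y^\ast + z \in F_S$. Thus $F_S$ contains a full $L_S$-neighborhood of $y^\ast$, and since $F_S$ is a cone containing $0$ this forces $\operatorname{span} F_S = L_S$. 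The payoff is that the hypothesis now supplies, for each valid $S$, a vector $v_l \in V$ with $a_i v_l = 0$ for $i \in S$, i.e. $v_l \in L_S = \operatorname{span} F_S$; combined with $v_l \in C$ this gives $v_l \in F_S$, correctly oriented.

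For the induction the base case $\dim F_S = 0$ gives $F_S = \{0\} \subseteq \cone(V)$. For the inductive step take $y \in F_S \setminus \{0\}$ and let $v_l \in F_S$ be the witness above. I would slide along $y - t v_l$ for $t \ge 0$: pointedness of $F_S$ keeps $-v_l$ out of its recession cone, so there is a finite largest $t^\ast \ge 0$ at which some previously slack inequality $a_j$ ($j \notin S$) becomes tight. The active set $S'$ of $y - t^\ast v_l$ then strictly contains $S$, and because $a_j$ does not vanish on $L_S$ (it is positive at $y^\ast$) we get $L_{S'} \subsetneq L_S$, hence $\dim F_{S'} < \dim F_S$ by the span computation. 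Applying the induction hypothesis to the valid set $S'$ gives $y - t^\ast v_l \in \cone(V)$, and therefore $y = (y - t^\ast v_l) + t^\ast v_l \in \cone(V)$.

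The main obstacle I anticipate is not the induction skeleton, which is the standard ``peel off a ray and drop to a proper face'' argument, but rather making the two implicit hypotheses do their work cleanly: the orientation issue, where I must know that $t^\ast \ge 0$ and $v_l \in F_S$ rather than on the opposite ray (this is exactly where $V \subseteq C$ enters, and the conclusion genuinely fails without it, as the one-dimensional example $C = \mathbb{R}_{\ge 0}$, $V = \{-1, 0\}$ shows), and the termination of the slide, which needs $C$ pointed so the face dimension genuinely decreases. Checking that $S'$ is valid and that $\dim L_{S'} < \dim L_S$ is the single spot where the identity $\operatorname{span} F_S = L_S$ must be invoked with care, and that is the step I would write out in full.
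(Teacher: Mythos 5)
Your proposal has the same skeleton as the paper's proof: take the tight set $S$ of a point $y$, subtract the largest admissible multiple of the witness $v_l$, note that the tight set strictly grows, and induct. Your induction measure ($\dim F_S$, justified via the identity $\operatorname{span} F_S = L_S$, which you prove correctly) is a heavier version of the paper's induction on $|[k]\setminus S|$, which gets strict progress for free from $S' \supsetneq S$ without any span computation. What you add beyond the paper is substantively right: as literally stated the lemma is false, your counterexample $C=\mathbb{R}_{\geq 0}$, $V=\{-1,0\}$ is valid, and it pinpoints that the paper's step ``choose $\lambda$ as large as possible so that $y' \in C$'' silently presupposes a finite maximal $\lambda$, which the hypothesis $a_i v_l = 0$ for $i \in S$ alone does not guarantee. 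In the paper's application this is harmless because Lemma \ref{rtmethod} supplies a stronger witness, with $b_i r_\tau = 0$ for $i \in S$ \emph{and} $b_j r_\tau > 0$ for all $j \notin S$.

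However, your own patch ($V \subseteq C$ plus $C$ pointed) is still insufficient, and the gap sits exactly in the sentence ``pointedness of $F_S$ keeps $-v_l$ out of its recession cone, so there is a finite largest $t^\ast$'': this fails when $v_l = 0$, which your hypothesis permits. Concretely, $C = \mathbb{R}_{\geq 0}$ and $V = \{0\}$ satisfy $V \subseteq C$, pointedness, and your witness condition for both valid sets, yet $C \not\subseteq \cone(V) = \{0\}$. The clean repair is to require a relative-interior witness, i.e.\ $a_i v_l = 0$ for $i \in S$ and $a_j v_l > 0$ for all $j \notin S$: this is exactly what Lemma \ref{rtmethod} delivers, it automatically places $v_l$ in $C$ (making your $V \subseteq C$ flag redundant for the witnesses actually used), it forces $v_l \neq 0$ whenever $S \neq [k]$, and it makes the finite $t^\ast$ immediate, since each $j \notin S$ bounds $t$ by $a_j y / a_j v_l$. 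Pointedness is still genuinely needed even then (for a half-plane $\{x_2 \geq 0\}$ with $V = \{(1,0),(0,1)\}$ the relative-interior witness condition holds but the conclusion fails, the peel-off along the lineality direction never terminating), and it is also where the paper's base case ``$S=[k]$ forces $y=0$'' tacitly lives; it holds in the application since $EDC_{KN_n}$ lies in the nonnegative orthant. With the witness condition strengthened as above, your argument is correct and coincides in substance with the paper's.
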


\begin{proof}
   Suppose that we have an $y$ in $C$, and let $S$
be the associated set of indices. The proof is by induction on the size of $[k] \setminus S$.   If $S = [k]$ there is nothing to show, since
$y$ must be the zero vector in that case. So suppose $S \neq [k]$ and let $v_l$ be the element of $V$ that is guaranteed to exist.  Then we can compute $y' =  y - \lambda v_l$ where $\lambda\geq 0$ is chosen as large as possible
so that $y' \in C$.  Since $\lambda$ is as large as possible, $y'$ must have an associated $S'$, which strictly
contains $S$.  By the inductive hypothesis, we can write $y' \in \cone(V)$, so there
are $\lambda_i \geq 0$ so that $y' = \sum_{i = 1}^n  \lambda_i  v_i$.   But then
$y = \sum_{i = 1}^n  \lambda_i  v_i + \lambda v_l $, so $y \in \cone(V).$
\end{proof}

To use Lemma \ref{raymethod} on the cone $EDC_{KN_n}$, the next step will be to first characterize which $S$ are valid for $EDC_{KN_n}$ and then find an $r_{\tau}$ for each valid $S$. In order to do this, first we prove some useful non-facet inequalities on $EDC_{KN_n}$.

\begin{lemma} \label{otherineq}
    Let $\delta \in EDC_{KN_n}$ and let $\delta(i,j)$ be the $(i,j)$th coordinate of $\delta$. Then for all $i<j<k \in [n]$, $\delta(i,j)\geq 0$, $\delta(i,j)\leq \delta(i,k)$, and $\delta(j,k)\leq \delta(i,k)$.
\end{lemma}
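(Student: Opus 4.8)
The plan is to establish the three families separately. Nonnegativity $\delta(i,j)\geq 0$ is immediate: membership in $EDC_{KN_n}$ means $\delta=\delta_{KN_n,\textbf{a}}$ for a nonnegative weight vector $\textbf{a}$, and $\delta(i,j)=\sum_{A|B\in S_{i|j}}a_{A|B}$ is then a sum of nonnegative terms. The two remaining inequalities are ``base-point monotonicity'' statements: $\delta(i,j)\leq\delta(i,k)$ says that with the first index $i$ fixed the distance grows as the second index moves away, and $\delta(j,k)\leq\delta(i,k)$ is the mirror statement with the second index fixed. These generalize the left inequalities (the case $i=1$) and the right inequalities (the case $k=n$) of Theorem \ref{facets}, and I would derive them from those facets together with the covering inequalities.

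For $\delta(i,j)\leq\delta(i,k)$ with $i<j<k$, first I would telescope
\[
\delta(i,k)-\delta(i,j)=\sum_{m=j}^{k-1}\bigl(\delta(i,m+1)-\delta(i,m)\bigr),
\]
so it suffices to show each forward difference $D(i,m):=\delta(i,m+1)-\delta(i,m)$ is nonnegative for the relevant $m$. The key observation is that the covering inequality $\delta(i',m)+\delta(i'-1,m+1)\leq\delta(i',m+1)+\delta(i'-1,m)$ rearranges to exactly $D(i',m)\geq D(i'-1,m)$; that is, the forward difference only decreases as the first index is lowered. Chaining these gives $D(i,m)\geq D(i-1,m)\geq\cdots\geq D(1,m)$, and the final term $D(1,m)=\delta(1,m+1)-\delta(1,m)\geq 0$ is precisely a left inequality. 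Hence every $D(i,m)\geq 0$ and the telescoped sum is nonnegative.

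The inequality $\delta(j,k)\leq\delta(i,k)$ would follow by the mirror-image argument: telescope $\delta(i,k)-\delta(j,k)$ as a sum of backward differences $E(m,k):=\delta(m,k)-\delta(m+1,k)$, note the covering inequality now reads $E(m,k')\geq E(m,k'+1)$, and chain upward in the second index to the base case $E(m,n)=\delta(m,n)-\delta(m+1,n)\geq 0$, which is a right inequality. Alternatively, this whole family is obtained from the first by applying the order-reversing relabeling $i\mapsto n+1-i$, under which the left and right inequalities swap and the covering inequalities are preserved.

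The only real care needed --- the step I expect to be the main (if minor) obstacle --- is bookkeeping the index ranges so that each invoked facet inequality is legitimately among those listed in Theorem \ref{facets}. Specifically, the covering inequalities require $i'<m$ with $i',m\in[2,n-1]$, and the left and right inequalities require their index to lie in $[2,n-1]$. In the telescoping for $\delta(i,j)\leq\delta(i,k)$ the summation index satisfies $i+1\leq j\leq m\leq k-1\leq n-1$, which forces the strict inequality $i'\leq i<m$ at every chaining step and keeps $m\in[2,n-1]$; the mirror argument is symmetric, using $m+1<k$ from $m\leq j-1<k-1$. Verifying these containments is routine but must be done to ensure the chains of facet inequalities are all valid.
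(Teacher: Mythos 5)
Your proposal is correct and follows essentially the same route as the paper's proof: both derive $\delta(i,j)\leq\delta(i,k)$ (and its mirror) by combining the covering inequalities with the left (respectively right) inequalities as base cases, differing only in that you fix the column $m$ and chain the covering facets down in the first index before summing, whereas the paper sums the covering facets over $l\in[j,k-1]$ first and then recurses downward in $i$ --- the same nonnegative combination of facets with the double summation interchanged. Your explicit bookkeeping of the index ranges (ensuring $i'<m$, $m\in[2,n-1]$, etc.) is a welcome bit of rigor that the paper leaves implicit.
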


\begin{proof}
    The inequality $\delta(i,j)\geq 0$ is implied from $\delta$ being a dissimilarity map.\\
    
    The inequality $\delta(1,j)\leq \delta(1,k)$ is implied by the facets $\delta(1,l) \leq \delta(1,l+1)$ for all $l\in [j,k-1]$. The inequality $\delta(j,n)\leq \delta(i,n)$ is implied by the facets $\delta(l,n) \leq \delta(l-1,n)$ for all $l\in [i+1,j]$.
    
The inequality $\delta(i,j) \leq \delta(i,k)$ is implied by the non-facet inequality $\delta(i-1,j) \leq \delta(i-1,k)$ and the covering facets:
\[
\delta(i,l)+\delta(i-1,l+1) \leq \delta(i,l+1) +\delta(i-1,l) 
\]
for all $l\in [j,k-1]$. Incrementing $l$ and canceling terms gives $\delta(i,j) \leq \delta(i,k)$ as desired. The non-facet inequality $\delta(i-1,j) \leq \delta(i-1,k)$ can be implied by facet inequalities using the same argument as was just used for  $\delta(i,j) \leq \delta(i,k)$. This will create a recursion of implications whose base case will be $\delta(1,j) \leq \delta(1,k)$ which was shown to be implied by facets above. Using a very similar argument the inequality $\delta(j,k) \leq \delta(i,k)$ is implied by the non-facet inequality $\delta(j,k+1) \leq \delta(i,k+1)$ and the facets:
\[
\delta(l,k)+\delta(l-1,k+1) \leq \delta(l,k+1) +\delta(l-1,k) 
\]
for all $l\in [i+1,j]$.
\end{proof}

Now to characterize which $S$ are valid for $EDC_{KN_n}$, we must see 
if equality on one facet or set of facets implies equality on any other facets. 
In order to clearly see which $S$ are valid a new diagram will be 
introduced which will also provide additional insights into the 
structure of the space $EDC_{KN_n}$. Before defining the diagram itself we will need to define some components used in it.

\begin{definition}
 Let $\delta \in EDC_{KN_n}$  be represented as an $n \times n$  strictly upper triangular matrix. 
 Let $\tilde{\delta}$ be an $n+1 \times n+1$ matrix obtained from $\delta$ where the bottom left 
 $n$ by $n$ block is $\delta$ and first row and last column are all $1$'s. 
 Because of the natural labeling of the rows and columns of $\delta$, 
 the rows of $\tilde{\delta}$ will be indexed from $0$ to $n$ 
 and the columns of $\tilde{\delta}$ will be indexed from $1$ to $n+1$.
 \end{definition}

 The following function is introduced because its possible values exactly correspond with valid $S$ for $EDC_{KN_n}$.
 
 \begin{definition}\label{f}
    Let $ k \in [0,n-2]$, $ l \in [2,n]$  and  $ k< l$  such that if $k=0$, $ l \neq n$.
  The \emph{facet indicator function for $\delta$}, $ f_\delta(k,l)$,
is a Boolean function such that \[
         f_\delta(k,l)=\begin{cases}
  1  &  \text{ if } \tilde{\delta}(k,l)+\tilde{\delta}(k+1,l+1)=\tilde{\delta}(k+1,l)+\tilde{\delta}(k,l+1) \\
  0 &  \text{ otherwise. }  
\end{cases}
\]
 \end{definition}
 
 Note that the facet indicator function is defined on each $2 \times 2$ sub-matrix of $\tilde{\delta}$ such that at least two of the entries are also in the upper triangular portion of $\delta$. 
 Additionally, notice that the facets of $EDC_{KN_n}$ correspond directly with $f_\delta(k,l)$. The submatrix corresponding to $f_\delta(k,l)$ is the $2 \times 2$ matrix of the entries $\tilde{\delta}(k,l),\tilde{\delta}(k+1,l),\tilde{\delta}(k,l+1),\tilde{\delta}(k+1,l+1)$.
 If two of the entries of the submatrix for $ f_\delta(k,l)$ are $1's$, the 
 corresponding facet is a left or right inequality (depending on if the $1's$ are from the first row or last column respectively).
 If one of the entries of the submatrix is $0$, the corresponding facet is a triangle inequality. If all four of the entries of the submatrix are also in $\delta$ then  the corresponding facet is a  covering inequality. Additionally, if $ f_\delta(k,l)=1$ that corresponding facet will be satisfied with equality and if $ f_\delta(k,l)=0$ it will be a strict inequality. Thus, $f_\delta(k,l)$ gives a list of boolean outputs for which facets a particular point of $EDC_{KN_n}$ lies on. Because of this property characterizing which $f$ can arise as possible $f_\delta$ will characterize which $S$ are valid for  $EDC_{KN_n}$.
 
 \begin{definition}
    Let $ k \in [0,n-2]$, $ l \in [2,n]$  and  $ k< l$  such that if $k=0$, $ l \neq n$. The \emph{facet corresponding to (k,l)} is the facet $\tilde{\delta}(k,l)+\tilde{\delta}(k+1,l+1)\geq\tilde{\delta}(k+1,l)+\tilde{\delta}(k,l+1).$
 \end{definition}

 In order to help characterize for which $f$  there exists a $\delta$ such that $f = f_\delta$, meaning that $f$ is 1 exactly when a facet is satisfied by $\delta$ with equality,
 two more related Boolean functions for $\tilde{\delta}$ will be defined.
 
 \begin{definition}\label{g}
 Let $i \in [0,n-1]$, $j \in [n+1]$.
 The \emph{vertical indicator function for $\delta$}, $g_\delta$, is a Boolean function such that \[
         g_\delta(i,j)=\begin{cases}
  1  &  \text{ if } \tilde{\delta}(i,j)=\tilde{\delta}(i+1,j) \\
  0 &  \text{ otherwise. }  
\end{cases}
\]
 Let $i'\in [0,n]$, $j'\in [n]$.
 The \emph{horizontal indicator function for $\delta$}, $h_\delta$, is a  Boolean function such that \[
         h_\delta(i',j')=\begin{cases}
  1  &  \text{ if } \tilde{\delta}(i',j')=\tilde{\delta}(i',j'+1) \\
  0 &  \text{ otherwise.}  
\end{cases}
\]
\end{definition}

Note that the vertical and horizontal indicator functions are defined on each pair of vertically adjacent entries of $\tilde{\delta}$ and each pair of horizontally adjacent entries of $\tilde{\delta}$, respectively.

\begin{definition}
  Let $ \delta \in EDC_{KN_n}$. The grouping $(f_\delta,g_\delta,h_\delta) $  is the \emph{X-diagram} 
  associated to $\delta$.  More generally, a triple of boolean functions $(f,g,h)$ where $f$ has domain $\{(k,l)| k \in [0,n-2]$, $ l \in [2,n]$  and  $ k< l\}$, $g$ has domain $\{(i,j) |i \in [0,n-1]$, $j \in [n+1]\}$, and $h$ has domain $\{(i',j') |i'\in [0,n]$, $j'\in [n]\}$ is called an \emph{X-diagram} of size $n$.
  An X-diagram $(f,g,h)$ is \emph{valid} if there exists a $\delta \in EDC_{KN_n}$
  such that $\delta \in EDC_{KN_n}$ with $(f,g,h) = (f_\delta,g_\delta,h_\delta) $.
 \end{definition}
 
 To draw an X-diagram $(f,g,h)$, take the following steps:
 \begin{enumerate}
     \item Draw a grid graph  with vertices for each entry of $\tilde{\delta}$ (We draw the vertices as either boxes, or $0$'s and $1$'s if the entry of $\tilde{\delta}$ is always that number).
     \item Remove the vertices of $\tilde{\delta}$ from the subdiagonal of  $\delta$ and lower.
     \item Take a subgraph of this grid graph, keeping edges for every two
 adjacent vertices only if the indicator function for corresponding entries of $\tilde{\delta}$ is $1$. Here their indicator function is either horizontal or vertical indicator function depending on if they are horizontally or vertically adjacent.
 \item Consider each $2 \times 2$ submatrix of $\tilde{\delta}$ where $(k,l)$  in the domain of $f(k,l)$, i.e the entries $\tilde{\delta}(k,l),\tilde{\delta}(k+1,l),\tilde{\delta}(k,l+1),$ and $ \tilde{\delta}(k+1,l+1)$. If $f(k,l)=0$ place a dot in the center of the vertices of the grid graph corresponding to the entries of $2 \times 2$ submatrix. If $f(k,l)=1$ place an X.
 \end{enumerate}
  See Figure \ref{xdiagrampic} for an example.
 
\begin{exmp}\label{xdiamex}
 Consider  $ \delta \in EDC_{KN_6}$, such that \[\delta =\begin{bmatrix}
0& 4 & 4 & 8 & 8 & 8 \\
 &0& 1 & 6 & 6& 7\\
 &  &0& 3 & 5& 6\\
&  &  &0& 2& 3\\
&  &  & & 0&2\\
&  &  & & &0\\
\end{bmatrix}.
\]
The $X$ diagram for this $\delta$ can be seen in Figure \ref{xdiagrampic}. Additionally in this $X$-diagram you can see that
  \[
         g_\delta(i,j)=\begin{cases}
  1  &  \text{ if } i\in [0,4] \text{ and } j=7\\
  0 &  \text{ otherwise;}  
\end{cases}
\]
 \begin{align*}
         &h_\delta(0,2)=h_\delta(0,3)= h_\delta(0,4)=h_\delta(0,5)=h_\delta(1,2)=h_\delta(1,4) =h_\delta(1,5)=h_\delta(2,4)=1 \\
         &\text{and }  
  h_\delta(i,j)=0   \text{ otherwise;}\\          &f_\delta(0,2)=f_\delta(0,4)=f_\delta(0,5)=f_\delta(1,4)=f_\delta(2,5)=f_\delta(3,5)=f_\delta(3,4)=1\\ 
&\text{and }  
  f_\delta(i,j)=0   \text{ otherwise.}  
\end{align*}
    
\end{exmp}
 \begin{figure}[t]
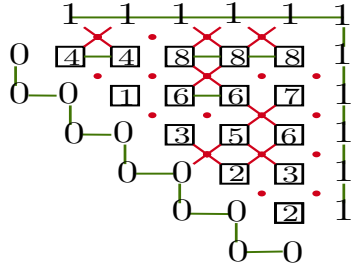


\centering
\resizebox{150pt}{100pt}{

\tikzset{every picture/.style={line width=0.75pt}} 



}
\caption{An example $X$ diagram for a point $\delta$  described in Example \ref{xdiamex}.}\label{xdiagrampic}

\end{figure}

 The following definition will be useful to characterize rays $r_{\tau}$ on an $X$-diagram and in the proof of Lemma \ref{rtmethod}.
 
 \begin{definition} 
 An element of  the form $\tilde{\delta}(i,i+1)$ with 
 $1\leq i\leq n$ is called a \emph{diagonal element}.
 \end{definition}
 Notice that every $r_{\tau} \in R_n$ can be specified by which the
 diagonal elements are $0$ and $1$. 
 Consider an $\tilde{\delta}$ as matrix for some $X$-diagram where only the diagonal elements of $\tilde{\delta}$ are specified.  The other entries of the sub-matrix $\delta$ will be:
 \[
         \delta(i,j)=\begin{cases}
  0  &  \text{ if } \tilde{\delta}(k,k+1)=0 \text{ for all  } k\in[i,j] \\
  1 &  \text{ otherwise. }  
\end{cases}
\]
 This will give a $\delta$ such that $\delta=r_{\tau}$ for some $ r_{\tau} \in R_n$  unless all of the diagonal entries are $0$.
 
 \begin{exmp}
    Let $ r_{\tau} =\delta \in EDC_{KN_6}$ with diagonal elements ${\delta}(1,2)=0,{\delta}(2,3)=1,{\delta}(3,4)=0,{\delta}(4,5)=0,{\delta}(5,6)=0$. Then the other entries of $\delta$ are \[\delta =\begin{bmatrix}
0&0 & 1 & 1 & 1 & 1 \\
 & 0&1 & 1 & 1& 1\\
 &  & 0&0 & 0& 0\\
&  &  & 0&0& 0\\
&  &  & &0& 0\\
&  &  & && 0\\
\end{bmatrix}.
\]
 \end{exmp}

 The extra information that comes from $g$ and $h$ in terms of the $X$-diagram will
 help to characterize the allowable $f$'s.  Our next goal is to develop methods to detect
 which triples $(f,g,h)$ are valid $X$-diagrams.  
 \begin{lemma} \label{equalrules}
 Let $(f,g,h)$ be a valid $X$-diagram.
 \begin{enumerate}
 \item If $f(k,l)=1$ then $g(k,l) = g(k,l+1)$ and  $h(k,l) = h(k+1,l)$.  \label{linemove}
 \item If $g(k,l)= g(k,l+1)=1$  then $f(k,l)=1$.  Similarly, if $h(k,l)= h(k+1,l)=1$ then  $f(k,l)=1$. \label{xforce}
 \item Let  $j> i$. Then if $g(i,j) = 1$, then $g(i, j+1) = 1$.  
Similarly, if $h(i,j) = 1$, then  $h(i-1, j) = 1$. \label{moveup}
 \item  If $g(i,j)=g(i,j+1)=h(i,j)=1$ then $h(i+1,j)=1$. Similarly, if  $g(i,j+1)=h(i,j)=h(i+1,j)=1$ then $g(i,j)=1$\label{equalbox}.
 
 \end{enumerate}
 \end{lemma}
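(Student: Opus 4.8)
The plan is to read each of the four rules directly off the defining equations of $f_\delta$, $g_\delta$, and $h_\delta$ in Definitions \ref{f} and \ref{g}, treating $\tilde{\delta}$ as a fixed matrix and recalling from the discussion after Definition \ref{f} that the facet of $EDC_{KN_n}$ attached to $(k,l)$ is exactly $\tilde{\delta}(k,l)+\tilde{\delta}(k+1,l+1)\ge \tilde{\delta}(k+1,l)+\tilde{\delta}(k,l+1)$, with $f_\delta(k,l)=1$ recording equality there. Parts \ref{linemove}, \ref{xforce}, and \ref{equalbox} concern only \emph{equalities} among the four corners of a single $2\times 2$ block, and I expect them to follow by elementary algebra and transitivity; the one substantive step is part \ref{moveup}, which alone uses the monotonicity inequalities of Lemma \ref{otherineq}.

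For part \ref{linemove} I would rewrite $f_\delta(k,l)=1$ as $\tilde{\delta}(k,l)-\tilde{\delta}(k+1,l)=\tilde{\delta}(k,l+1)-\tilde{\delta}(k+1,l+1)$; the left side vanishes exactly when $g_\delta(k,l)=1$ and the right side exactly when $g_\delta(k,l+1)=1$, so these agree, while the regrouping $\tilde{\delta}(k,l)-\tilde{\delta}(k,l+1)=\tilde{\delta}(k+1,l)-\tilde{\delta}(k+1,l+1)$ gives $h_\delta(k,l)=h_\delta(k+1,l)$ in the same way. Part \ref{xforce} is the converse reading: if the two vertical (respectively horizontal) edges of the block are equalities, substituting them into the four-term sum immediately yields $f_\delta(k,l)=1$. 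Part \ref{equalbox} is pure transitivity around the block: from the top edge ($h_\delta(i,j)=1$) and both vertical edges ($g_\delta(i,j)=g_\delta(i,j+1)=1$) the chain $\tilde{\delta}(i+1,j)=\tilde{\delta}(i,j)=\tilde{\delta}(i,j+1)=\tilde{\delta}(i+1,j+1)$ forces $h_\delta(i+1,j)=1$, and the second assertion is the analogous chain with rows and columns exchanged.

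For part \ref{moveup} I would introduce the vertical gap $d(i,l)=\tilde{\delta}(i,l)-\tilde{\delta}(i+1,l)$, so that $g_\delta(i,l)=1$ is equivalent to $d(i,l)=0$. Two facts drive the argument: first, $d(i,l)\ge 0$, which is the monotonicity $\tilde{\delta}(i+1,l)\le \tilde{\delta}(i,l)$ supplied by Lemma \ref{otherineq} (reading diagonal entries as $0$); second, $d(i,l)\ge d(i,l+1)$, which is exactly the facet inequality at $(i,l)$ after rearrangement. Since $d(i,\cdot)$ is nonnegative and nonincreasing, once it reaches $0$ it stays $0$: from $d(i,j)=0$ one gets $0=d(i,j)\ge d(i,j+1)\ge 0$, hence $g_\delta(i,j+1)=1$. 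The statement for $h_\delta$ is symmetric, using the horizontal gap $e(i,j)=\tilde{\delta}(i,j+1)-\tilde{\delta}(i,j)\ge 0$ and the rearrangement $e(i+1,j)\ge e(i,j)$ of the same facet inequality, which makes $e(\cdot,j)$ nonincreasing as the row index decreases.

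The main obstacle is the boundary of $\tilde{\delta}$: the constant top row $0$ and the constant right column $n+1$. Monotonicity of the gaps is clean on any pair of adjacent entries both lying in the $\delta$-block, and it is automatic on the adjoined strips, where the gaps are identically $0$; the delicate case is comparing a block entry against a strip entry. Here I would lean on the constancy of the strips together with the deliberate exclusion of the corner $(0,n)$ from the domain of $f_\delta$ in Definition \ref{f} (the inequality $\delta(1,n)\le 1$ it would encode is not a facet of $EDC_{KN_n}$). Concretely, the conclusion $g_\delta(i,j+1)=1$ when $j+1=n+1$ is free because column $n+1$ is constant, and likewise $h_\delta(i-1,j)=1$ when $i-1=0$ is free because row $0$ is constant; verifying that every step of the monotonicity chain stays within the domain of $f_\delta$ and never calls on the excluded corner is the bookkeeping I expect to be the crux of the proof.
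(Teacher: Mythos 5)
Your proposal is correct and takes essentially the same route as the paper's proof: parts (1), (2), and (4) by direct algebraic regrouping and transitivity of the four corner equalities of the $2\times 2$ block, and part (3) by playing the facet inequality at $(i,j)$ against the reverse monotonicity inequality from Lemma \ref{otherineq}, with your gap functions $d$ and $e$ being only a cosmetic repackaging of that two-inequality squeeze. The boundary bookkeeping you single out as the crux is in fact not treated in the paper's proof at all --- it applies Lemma \ref{otherineq} uniformly and never discusses the constant strips or the excluded corner $(0,n)$ --- so your observation that the conclusions $g(i,n+1)=1$ and $h(0,j)=1$ are automatic is, if anything, more careful than the published argument.
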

 \begin{proof}
 
 For (\ref{linemove}), if $f(k,l)=1$ then, \[\tilde{\delta}(k,l)+\tilde{\delta}(k+1,l+1)=\tilde{\delta}(k+1,l)+\tilde{\delta}(k,l+1).\] If $g(k,l)=1$  then $\tilde{\delta}(k,l)=\tilde{\delta}(k,l+1)$ which implies that $\tilde{\delta}(k+1,l+1)=\tilde{\delta}(k+1,l)$ which forces that $g(k,l+1)=1.$ Similarly $g(k,l+1)=1$ forces that $g(k,l)=1.$ By the same logic $h(k,l)=1$ forces $h(k+1,l)=1$ and vice versa. This proves (\ref{linemove}).
 \\
 
For (\ref{xforce}), if $g(k,l)=1$ and $g(k,l+1)=1$, then $\tilde{\delta}(k,l)=\tilde{\delta}(k,l+1)$ and $\tilde{\delta}(k+1,l)=\tilde{\delta}(k+1,l+1)$. Thus $\tilde{\delta}(k,l)+\tilde{\delta}(k+1,l+1)=\tilde{\delta}(k+1,l)+\tilde{\delta}(k,l+1)$ so $f(k,l)=1$. Similarly $h(k,l)=1$ and $h(k+1,l)=1$  implies that $\tilde{\delta}(k,l)+\tilde{\delta}(k+1,l+1)=\tilde{\delta}(k+1,l)+\tilde{\delta}(k,l+1)$ so $f(k,l)=1$, as desired.
\\

For (\ref{moveup}), Let  $j> i$, and  $g(i,j) = 1$. Thus $\tilde{\delta}(i,j)=\tilde{\delta}(i+1,j)$. Consider the facet inequality \[\tilde{\delta}(i,j)+\tilde{\delta}(i+1,j+1)\geq\tilde{\delta}(i+1,j)+\tilde{\delta}(i,j+1).\] Since $\tilde{\delta}(i,j)=\tilde{\delta}(i+1,j)$, this inequality reduces to \[\tilde{\delta}(i+1,j+1)\geq\tilde{\delta}(i,j+1).\]  By Lemma \ref{otherineq}, $\tilde{\delta}(i+1,j+1)\leq\tilde{\delta}(i,j+1),$ and thus $\tilde{\delta}(i+1,j+1)=\tilde{\delta}(i,j+1),$ which implies that $g(i, j+1) = 1$. A very similar argument will show that if $h(i,j) = 1$, then  $h(i-1, j) = 1$. 
\\

For (\ref{equalbox}),  if $g(i,j)=g(i,j+1)=h(i,j)=1$ then $\tilde{\delta}(i,j)=\tilde{\delta}(i+1,j)$,  $\tilde{\delta}(i,j+1)=\tilde{\delta}(i+1,j+1)$, and $\tilde{\delta}(i,j)=\tilde{\delta}(i,j+1)$. Thus $\tilde{\delta}(i+1,j)=\tilde{\delta}(i,j)=\tilde{\delta}(i,j+1)=\tilde{\delta}(i+1,j+1)$ so $h(i+1,j)=1$. A similar argument will show that if  $g(i,j+1)=h(i,j)=h(i+1,j)=1$ then $g(i,j)=1$.
 \end{proof}
  Now, Lemma \ref{equalrules} can be used to classify which $S$ are valid for $EDC_{KN_n}$.

\begin{exmp}\label{exwrongS}
   Consider the $X$-diagram on the left in Figure \ref{wrongS}. Using the rules in Lemma \ref{equalrules}, we can see that the underlying $S$ for this $X$-diagram is not valid because $f(3,6)=1$ and $g(3,7)=1$ but $g(3,6)=0$ violating rule $1$. 
   In addition $g(1,3)=1$ and $g(1,4)=1$ but $f(1,3)=0$, violating rule $2$. Similarly,
   $h(3,3)=1$ but $h(2,3)=0$  violating rule $3$. 
   Lastly, $g(1,4)=h(1,4)=h(2,4)=1$ but $g(1,5)=0$ violating rule $4$. The $X$-diagram on the right is the one obtained by following all of the rules in Lemma \ref{equalrules}.
\end{exmp}
\begin{figure}[t]
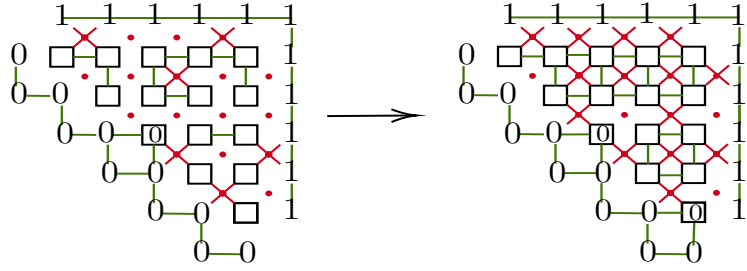


\centering
\resizebox{300pt}{100pt}{

\tikzset{every picture/.style={line width=0.75pt}} 

\tikzset{every picture/.style={line width=0.75pt}} 



}
\caption{The $X$-diagram on the left is not valid as it violates multiple implications as described in Example \ref{exwrongS}. The $X$-diagram on the right is valid and is obtained by starting with the one on the left and following all rules in Lemma \ref{equalrules}.}\label{wrongS}

\end{figure}
The following definition will be used to give the argument to find a $r_{\tau}$ for every valid $S$.
 \begin{definition} Let $\tilde{\delta}$ be the matrix for some  $(\tilde{\delta},f,g,h)$. The \emph{diagonal entries associated to $(k,l)$} are the diagonal elements $\tilde{\delta}(k,k+1)$ and $\tilde{\delta}(l,l+1)$, if they exist. Note that all $(k,l)$ have two diagonal elements associated to them, except for when $k=0$ or $l=n$, the ones corresponding to left and right facets, which only have one associated diagonal entry.
 \end{definition}

 \begin{lemma}\label{rtmethod}
     Let $EDC_{KN_n}=\{x: Bx\geq 0\}$ with \begin{align}
    B &= \begin{bmatrix}
           b_1 \\
           \vdots \\
           b_k
         \end{bmatrix}.
         \end{align}
   For a given valid $S$ for $EDC_{KN_n}$, there exists a $r_{\tau}\in R_n$ such that $b_i r_{\tau}=0$ for all $i\in S$ and $b_i r_{\tau}>0$ for all $i\notin S$.
 \end{lemma}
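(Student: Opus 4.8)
The plan is to translate the condition that $S$ is valid into the language of $X$-diagrams and then read a fixed order set partition off the diagonal. First I would fix a witness $\delta\in EDC_{KN_n}$ whose set of tight facets is exactly the index set $S$, which exists because $S$ is valid. By the discussion following Definition \ref{f}, the facets satisfied with equality by $\delta$ are recorded precisely by the values of the facet indicator function $f_\delta$, so $S$ is the same data as $f_\delta$, and $(f_\delta,g_\delta,h_\delta)$ is then a valid $X$-diagram. The goal becomes: produce a fixed order set partition $\tau$ so that the ray $r_\tau$ is tight on exactly the facets recorded by $f_\delta$.

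The second step is to compute, purely combinatorially, which facets a ray makes strict. Using the same observation exploited in the proof of Lemma \ref{part1}, that an off-diagonal entry of $r_\tau$ equals the maximum of the relevant diagonal entries between its indices, I would show that the facet corresponding to $(k,l)$ is \emph{strict} for $r_\tau$ if and only if the two diagonal entries associated to $(k,l)$, namely $\tilde\delta(k,k+1)$ and $\tilde\delta(l,l+1)$, are both $1$ while every intermediate diagonal entry is $0$; equivalently, if and only if the interval $[k+1,l]$ is a block of $\tau$. This identifies the strict facets of any ray with the blocks of its partition (with the boundary facets $k=0$ and $l=n$ corresponding to the blocks containing $1$ and $n$), and it is exactly the target pattern I must reproduce.

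The third step is the construction of $\tau$ itself from the $X$-diagram. Since the diagonal elements $\tilde\delta(i,i+1)$ are the free generators of a ray (once they are declared $0$ or $1$ the remaining entries are forced by the block rule), I would declare $i$ and $i+1$ to lie in the same block precisely when $g_\delta(i,i+1)=1$; note $g_\delta(i,i+1)=1$ exactly when $\tilde\delta(i,i+1)=\tilde\delta(i+1,i+1)=0$, so this depends only on the $X$-diagram. This yields $\tau$ and hence the candidate $r_\tau\in R_n$, with the degenerate case of the single-block partition (where $r_\tau=0\notin R_n$) corresponding to $\delta=0$, i.e.\ $S$ equal to the full index set, which I would dispose of separately as in the base case of Lemma \ref{raymethod}.

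The main obstacle is the verification that the block intervals read off from the diagonal coincide with the strict-facet intervals of $\delta$, i.e.\ that $b_ir_\tau=0$ for $i\in S$ and $b_ir_\tau>0$ for $i\notin S$. This is where the full strength of the propagation rules of Lemma \ref{equalrules} enters: rule (\ref{moveup}) forces the equality pattern of a valid $X$-diagram to be closed as one moves up and to the right, rules (\ref{linemove})–(\ref{xforce}) tie each $f$-value to the surrounding $g$- and $h$-edges, and rule (\ref{equalbox}) controls the $2\times 2$ corners; together with the monotonicity inequalities of Lemma \ref{otherineq} these should let me propagate equalities outward from the diagonal to every $2\times2$ window and conclude that a facet is tight for $\delta$ exactly when $[k+1,l]$ is not a block of $\tau$. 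I expect the delicate points to be the boundary facets, whose associated diagonal set is a singleton and which therefore need the special treatment already flagged in the definition of associated diagonal entries, and confirming that no valid $S$ forces an inconsistent block structure; carrying this reconciliation through the rules of Lemma \ref{equalrules} is the crux of the argument.
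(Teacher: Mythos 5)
Your second step is correct, and it is the same key observation the paper uses: for a ray $r_\tau$ the facet at $(k,l)$ is strict precisely when the $2\times 2$ window shows the pattern $\begin{pmatrix}1&1\\0&1\end{pmatrix}$, i.e.\ precisely when $[k+1,l]$ is a block of $\tau$. But the proposal breaks at steps 3--4. The construction ``merge $i$ and $i+1$ exactly when $g_\delta(i,i+1)=1$,'' i.e.\ exactly when $\delta(i,i+1)=0$, does not even achieve the containment $b_ir_\tau=0$ for $i\in S$. Concretely, take $n=4$ and give weight $1$ to the splits $\{2,3,4\}|\{0,1\}$ and $\{1,2,3\}|\{0,4\}$ and to the trivial splits of $1$ and $4$; this is equidistant ($\delta(0,i)=2$ for all $i$) and yields $\delta(1,2)=\delta(1,3)=\delta(2,4)=\delta(3,4)=2$, $\delta(2,3)=0$, $\delta(1,4)=4$. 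The covering facet $\delta(2,3)+\delta(1,4)\leq\delta(1,3)+\delta(2,4)$ is tight ($4=4$), so it lies in $S$; your diagonal rule forces $\tau=1|23|4$, and by your own step 2 that ray is \emph{strict} on exactly this facet, since $r_\tau(2,3)+r_\tau(1,4)=1<2=r_\tau(1,3)+r_\tau(2,4)$. The rays that do vanish on this $S$ are $r_{123|4}$ and $r_{1|234}$: they merge leaves at \emph{positive} distance, so the correct $\tau$ cannot be read off the vanishing pattern of consecutive distances. This is exactly the freedom the paper's proof exploits: it builds $\tau$ by routing a staircase line through the $X$-diagram, choosing the columns in which to step down at positions where $f=0$, and uses the propagation rules of Lemma \ref{equalrules} to show that a strict facet is always available in the relevant row or column; the diagonal zeros of $\delta$ play no role in that choice.

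The second problem is your target itself. In step 4 you aim to show that a facet is tight for $\delta$ exactly when $[k+1,l]$ is not a block of $\tau$, i.e.\ that the tight set of $r_\tau$ equals $S$. By your step 2 the strict facets of any $r_\tau$ are in bijection with (the in-range) blocks of $\tau$, so there are fewer than $n$ of them, while $[k]\setminus S$ can comprise \emph{all} facets (e.g.\ $S=\emptyset$ is valid, witnessed by any interior point of the full-dimensional cone $EDC_{KN_n}$). So the exact-matching version is unattainable; indeed in the example above no ray is simultaneously strict on $\delta(1,3)\leq\delta(1,4)$ and on $\delta(2,4)\leq\delta(1,4)$ while tight on $S$, since that would force all of $[4]$ into a single block and $r_\tau=0$. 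The lemma's statement overclaims here: the paper's own proof establishes only the half $b_ir_\tau=0$ for $i\in S$, and that is the only half consumed by Lemma \ref{raymethod} (together with $r_\tau\neq 0$ to guarantee $S'\supsetneq S$ in the induction). To repair your plan you should drop the exactness goal, prove only the containment, and replace the diagonal-zero rule with a selection argument along the lines of the paper's path construction.
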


 \begin{proof}
     First, notice that in a valid X-diagram, $(f,g,h)$, the only way that a $r_{\tau} \in R_n$ does not make $f(k,l)=1$, is when the corresponding $2$ by $2$ submatrix has exactly the entries: \[\begin{pmatrix}
1 & 1\\
0 & 1 
\end{pmatrix}.\]
Thus if for a given valid $S$, if $r_{\tau}$ does not put $\begin{pmatrix}
1 & 1\\
0 & 1 
\end{pmatrix}$ on any $2$ by $2$ sub matrix with upper left corner $(k,l)$ and $f(k,l)=1$ in the $X$-diagram corresponding to $S$, then $b_i r_{\tau}=0$ for all $i\in S$.
To show this is always possible, we will construct $r_{\tau}$ from the $X$-diagram corresponding to $S$ by drawing a line from the left side of $\tilde{\delta}$ to the right side. Everything below the line will be $0$ and everything on or above the line will be $1$. \\

Starting at $(0,0)$, increment the row index to its maximum before incrementing the column index. Let $i_*$ be the index of the first column such that for some $l_*$, $f(l_*,i_*)$ has diagonal elements are that not $0$. This must exist because if all diagonal elements are $0$, the system $S$ will just be equality everywhere, which can be handled by the ray $r_{\tau}=(1,\dots, 1).$ There are two cases, either $l_*=0$ and $f(l_*,i_*)$ has one diagonal element, or $f(l_*,i_*)$ has two diagonal elements. Let  $(k',l')$ be the set of indices such that all $f(k',l')$  have $\tilde{\delta}(i_*,i_*+1)$ as a diagonal element. Any $(k',l')$ with any of its diagonal elements 
 equal to $0$, must have $f(k',l')=1$ by Lemma \ref{equalrules} rules $2$ and $3$. However, if $f(k',l')=1$ for all $(k',l')$ in a row or column then $\tilde{\delta}(i_*,i_*+1)$  would be $0$ using the rule $1$ from Lemma \ref{equalrules}. Thus there must exist a $(l_*,i_*)$ with $f(l_*,i_*)=0$.

Draw the line starting from  $\tilde{\delta}(l_*,0)$  until reaching $\tilde{\delta}(l_*,i_*+1)$ and then draw the line vertically until reaching $\tilde{\delta}(i_*,i_*+1)$, which must be non-zero.
 Now consider the row of $(i_*,l')$ who have $\tilde{\delta}(i_*,i_*+1)$ as a diagonal element. One or both of two things are true:
\begin{enumerate}
\item there exists $l_*$ with $f(i_*,l_*)=0$ and both of its diagonal elements are not $0$
\item  $f(i_*,n)=0$.
\end{enumerate}

This is true because it cannot be that $f(i_*,l')=1$ for all $l' \in [2,n]$ or by Lemma \ref{equalrules} rule $1$, $\tilde{\delta}(i_*,i_*+1)$ would $0$, and the only possible $(i_*,l')$ such that $f(i_*,l')\neq 1$ are those with non $0$ diagonal blocks.

If the second case is true, draw the line horizontally to $\tilde{\delta}(i_*,n)$ and the line is finished. If only the first case is true then draw the line to $\tilde{\delta}(i_*,l_*+1)$ and then draw it vertically down to the other non $0$ diagonal block and continue the process. There are only finitely many entries so the line will eventually reach the right side. This line only puts $


}
\caption{An example of the method to find an extreme ray for an $S$  such that $b_i r_{\tau}=0$ for all $i\in S$ as described in Lemma  \ref{rtmethod}.}\label{xdiagramLine}
\end{figure}

 \begin{corollary}\label{part2}
     $EDC_{KN_n} \subseteq cone(R_n).$ 
 \end{corollary}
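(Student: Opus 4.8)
The plan is to combine the three preceding lemmas into a single application of the ray-generation method. Recall from Lemma \ref{raymethod} that to conclude $EDC_{KN_n} \subseteq \cone(R_n)$, it suffices to exhibit, for every valid index set $S$ of the defining inequalities of $EDC_{KN_n}$, some element of $R_n$ that lies on exactly the facets indexed by $S$. More precisely, writing $EDC_{KN_n} = \{x : Bx \geq 0\}$ with rows $b_1, \dots, b_k$, I must show that for each valid $S \subseteq [k]$ there is a ray $r_\tau \in R_n$ with $b_i r_\tau = 0$ for all $i \in S$ and $b_i r_\tau > 0$ for all $i \notin S$.

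The key observation is that all three ingredients are already in place. First, Lemma \ref{part1} guarantees that every $r_\tau$ arising from a fixed order set partition actually lies in $EDC_{KN_n}$, so each candidate in $R_n$ is a genuine element of the cone. Second, the combinatorial constraints on which $S$ can be valid are governed by the implication rules of Lemma \ref{equalrules}, phrased in the language of $X$-diagrams: a valid $S$ corresponds precisely to a valid $X$-diagram $(f,g,h)$. Third, and most importantly, Lemma \ref{rtmethod} does exactly the required construction: given a valid $S$ for $EDC_{KN_n}$, it produces a specific $r_\tau \in R_n$ such that $b_i r_\tau = 0$ for all $i \in S$ and $b_i r_\tau > 0$ for all $i \notin S$.

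Thus the proof of the corollary is essentially an invocation: apply Lemma \ref{rtmethod} to produce, for each valid $S$, the required ray $r_\tau$, and then invoke Lemma \ref{raymethod} with $V = R_n$ to conclude that $EDC_{KN_n} \subseteq \cone(R_n)$. I would state this directly, perhaps as follows.

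\begin{proof}
Write $EDC_{KN_n} = \{x : Bx \geq 0\}$ with rows $b_1, \dots, b_k$ as in Lemma \ref{rtmethod}. By Lemma \ref{part1}, every $r_\tau$ with $\tau$ a fixed order set partition lies in $EDC_{KN_n}$, so $R_n \subseteq EDC_{KN_n}$. Let $S \subseteq [k]$ be any valid set for $EDC_{KN_n}$, meaning there is some $y \in EDC_{KN_n}$ with $b_i y = 0$ for $i \in S$ and $b_j y > 0$ for $j \notin S$. By Lemma \ref{rtmethod}, there exists a ray $r_\tau \in R_n$ such that $b_i r_\tau = 0$ for all $i \in S$. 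Hence for each valid $S$ there is an element of $R_n$ vanishing on precisely the rows indexed by $S$. Applying Lemma \ref{raymethod} with $V = R_n$, we conclude that $EDC_{KN_n} \subseteq \cone(R_n)$.
\end{proof}

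The only subtlety worth flagging, and the place where one must be careful rather than where any real difficulty lies, is the correspondence between valid index sets $S$ and valid $X$-diagrams: the hypotheses of Lemma \ref{raymethod} are phrased in terms of valid $S$, whereas Lemmas \ref{equalrules} and \ref{rtmethod} are phrased in terms of $X$-diagrams. Since the facet indicator function $f_\delta$ records exactly which facets a point satisfies with equality (as noted in the discussion following Definition \ref{f}), a valid $S$ is the same data as the set of $(k,l)$ with $f(k,l) = 1$ for a valid $X$-diagram, so no gap arises. With that identification understood, the corollary follows immediately from the assembled lemmas, and there is no genuine obstacle remaining at this stage.
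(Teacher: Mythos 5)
Your proof is correct and is essentially identical to the paper's, which derives the corollary immediately from Lemmas \ref{raymethod} and \ref{rtmethod} by applying Lemma \ref{raymethod} with $V = R_n$. Your additional remarks---citing Lemma \ref{part1} so that the subtraction step in Lemma \ref{raymethod} stays inside the cone, and spelling out the identification of valid sets $S$ with valid $X$-diagrams---are sound and, if anything, make explicit details the paper leaves implicit.
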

   \begin{proof}
This is immediate from Lemmas \ref{raymethod} and \ref{rtmethod}.
 \end{proof}
 
Now Lemma \ref{part1} and Lemma \ref{part2} prove that $EDC_{KN_n}= \text{cone}(R_n)$ so all that remains to be shown is that all $r_{\tau}$ are the extreme rays.

\begin{lemma}\label{part3}
 For all  $r_\tau \in R_n$ there exists a linear functional that maximizes $r_\tau$ among all elements of $R_n$.
\end{lemma}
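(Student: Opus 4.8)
The plan is to exhibit, for each $r_\tau \in R_n$, an explicit linear functional that is maximized over $R_n$ uniquely at $r_\tau$. First I would record the correspondence between fixed order set partitions and their cut sets. A fixed order set partition $\tau = t_1|\dots|t_k$ of $[n]$ is completely determined by its set of cut points $A_\tau = \{ i \in [n-1] : i \text{ and } i+1 \text{ lie in different blocks of } \tau \}$, and because every block $t_l$ is an interval, the diagonal coordinates of $r_\tau$ satisfy $r_\tau(i,i+1) = 1$ when $i \in A_\tau$ and $r_\tau(i,i+1) = 0$ otherwise. This is exactly the ``diagonal element'' description noted before the lemma, so $\tau \mapsto A_\tau$ is a bijection onto subsets of $[n-1]$, two elements of $R_n$ coincide if and only if they have the same cut set, and $r_\tau \neq 0$ forces $A_\tau \neq \emptyset$.

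Fixing $r_\tau$ with cut set $A = A_\tau$, I would then define the functional on $\mathbb{R}^{\binom n 2}$, written in the diagonal coordinates $\delta(i,i+1)$, by
\[
c(\delta) = \sum_{i \in A} \delta(i,i+1) - \sum_{i \in [n-1]\setminus A} \delta(i,i+1).
\]
For an arbitrary $\tau'$ with cut set $B = A_{\tau'}$, substituting $r_{\tau'}(i,i+1) = \mathbbm{1}_{\{i \in B\}}$ gives $c(r_{\tau'}) = |A \cap B| - |B \setminus A|$; in particular $c(r_\tau) = |A|$.

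The remaining step is to verify that this maximum is strict. Since $|A \cap B| \le |A|$ and $|B \setminus A| \ge 0$, we always have $c(r_{\tau'}) \le |A| = c(r_\tau)$. If $B \neq A$, then either $A \not\subseteq B$, so $|A \cap B| \le |A| - 1$, or else $A \subseteq B$ with $B \neq A$, so $|B \setminus A| \ge 1$ while $|A \cap B| = |A|$; in both cases $c(r_{\tau'}) \le |A| - 1 < c(r_\tau)$. Hence $c$ attains its maximum over $R_n$ uniquely at $r_\tau$, which proves the lemma. Moreover, since $c(0) = 0 < |A| = c(r_\tau)$, the functional $c$ is uniquely maximized over the whole point set $R_n \cup \{0\}$ at $r_\tau$, so $r_\tau$ is a vertex of $\mathrm{conv}(R_n \cup \{0\})$; as $EDC_{KN_n} = \cone(R_n)$ is the cone generated by these points, each such vertex spans an extreme ray, giving the intended consequence.

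I do not expect a genuine obstacle here. The one place demanding care is the identification $r_\tau(i,i+1) = \mathbbm{1}_{\{i \in A_\tau\}}$, together with the observation that distinct elements of $R_n$ have distinct cut sets; this is precisely what makes the inequality $c(r_{\tau'}) < c(r_\tau)$ strict for every $\tau' \neq \tau$ and thus separates $r_\tau$ from every other generator simultaneously, so that a single functional of the above form does the job and no large penalty weight is needed.
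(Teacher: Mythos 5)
Your proof of the lemma as stated is correct, and it takes a genuinely different route from the paper's. The paper first normalizes: it cuts the cone with the hyperplane $\sum_{i<j}\delta(i,j)=1$ to get the polytope $P_{KN_n}$ with points $p_\tau = r_\tau/K$, and then uses the support-based functional $p^*_\tau$ that is $+1$ on the support of $p_\tau$ and $-1$ off it. You instead work with the unnormalized rays and a functional supported only on the $n-1$ diagonal coordinates, built from the cut set $A_\tau$; the identification $r_\tau(i,i+1)=\mathbbm{1}_{\{i\in A_\tau\}}$ and the bijection $\tau\mapsto A_\tau$ make the evaluation $c(r_{\tau'})=|A\cap B|-|B\setminus A|$ immediate and the separation quantitative (a drop of at least $1$ for every $\tau'\neq\tau$). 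This buys you something real: with the paper's functional one has $p^*_\tau\cdot p_{\tau'}=1$ whenever $\operatorname{supp}(p_{\tau'})\subseteq\operatorname{supp}(p_\tau)$, which actually occurs for coarsenings --- for $n=3$, $\tau=1|2|3$ and $\tau'=1|23$ give $p^*_\tau\cdot p_{\tau'}=\tfrac12+\tfrac12=1=p^*_\tau\cdot p_\tau$ --- so the paper's equality analysis ("exactly the same zero and non-zero entries") really only forces support containment, not support equality. Your cut-set functional, which charges $-1$ for every diagonal coordinate outside $A_\tau$ that is \emph{missing} a cut as well as rewarding those present, strictly penalizes both refinements and coarsenings and thus avoids this tie entirely.

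One caveat on your closing remark: being a vertex of $\mathrm{conv}(R_n\cup\{0\})$ does not in general imply spanning an extreme ray of $\cone(R_n)$. With generators $(1,0)$, $(0,1)$, $(1,1)$ in the plane, $(1,1)$ is a vertex of the convex hull with the origin, yet $(1,1)=(1,0)+(0,1)$ lies in the cone of the other two; unique maximization over a generating set does not survive the rescaling allowed in conic combinations. Here the inference is rescued by a fact you did not state: every $r_\tau\in R_n$ has $r_\tau(1,n)=1$, since $1$ and $n$ lie in different blocks of any fixed order set partition with more than one block, so $R_n$ lies on the hyperplane $\{\delta:\delta(1,n)=1\}$. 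Any conic representation $r_\tau=\sum_{\tau'}\lambda_{\tau'}r_{\tau'}$ then forces $\sum_{\tau'}\lambda_{\tau'}=1$, i.e.\ it is a convex combination over $R_n$, and your strict maximization finishes the argument. This cross-section step is exactly the role the paper's normalization to $P_{KN_n}$ plays, and some version of it is needed to pass from the lemma to Theorem \ref{thm:extremerays}; with that one sentence added, your argument is complete and, at the equality analysis, tighter than the paper's.
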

 
\begin{proof}
  First, intersect the hyperplane 
  $\sum_{1\leq i<j\leq n} \delta(i,j)=1$
  with the cone $EDC_{KN_n}$ to obtain a polytope $P_{KN_n}$.  
  This hyperplane intersects every ray of $EDC_{KN_n}$ since $EDC_{KN_n}$ is contained in the
  positive orthant.  Furthermore,  $EDC_{KN_n} =  \cone(P_{KN_n})$ so vertices
  of $P_{KN_n}$ correspond to extreme rays of $EDC_{KN_n}$.  Define the point $p_{\tau}$  by
  
  \begin{equation}
p_{\tau}(i,j) = 
\begin{cases}
  0  &  \text{if } i \mbox{ and } j  \mbox{ are in the same blocks in } \tau  \\
  \frac{1}{K}, &  \text{otherwise. }  
\end{cases}
\end{equation}
where $K$ is the number pairs $\{i,j\}$ such that $i$ is separated from $j$ in $\tau$. 
Note that $p_\tau$  is  the point in $P_{KN_n}$ corresponding to $r_\tau$. 
Thus it suffices to find a linear functional that maximizes $p_{\tau}$  in the $P_{KN_n}$ as this will show that $p_{\tau}$ is a vertex of this polytope and thus $r_{\tau}$ will be an extreme ray in $EDC_{KN_n}$.
Furthermore, we only need to check that the linear functional maximizes among all the points $p_\tau$
since $EDC_{KN_n} = \cone(R_n)$, by previous results.

Consider the linear functional:
\begin{equation}
p^*_{\tau}(i,j) = 
\begin{cases}
  1  &  \text{if } p_{\tau} >0  \\
  -1 &  \text{if } p_{\tau}=0 
\end{cases}
\end{equation}
We will see this is maximized at $p_\tau$.  Note that evaluating $p^*_{\tau}$ on  $p_{\tau}$ gives $1$. 
But for any other $p_{\tau'}$ for $\tau'\neq \tau$, the highest it could sum to is $1$ and it can not sum to $1$ because that would mean $p_{\tau'}$ has exactly the same zero and non-zero entries as $p_{\tau}$, contradicting
that $\tau \neq \tau'$.
 \end{proof}

\begin{theorem}\label{thm:extremerays}
The extreme rays of $EDC_{KN_n}$  are $R_n$.
\end{theorem}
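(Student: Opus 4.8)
The plan is to assemble the three preceding results into the final statement. Every extreme ray of a polyhedral cone must lie among any finite generating set, so the first task is to confirm that $R_n$ generates $EDC_{KN_n}$; the second is to verify that each member of $R_n$ is genuinely extreme rather than a positive combination of the others.

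First I would record the equality $EDC_{KN_n} = \cone(R_n)$. The inclusion $\cone(R_n) \subseteq EDC_{KN_n}$ is immediate from Lemma \ref{part1}, which shows that each $r_\tau$ lies in the cone, while the reverse inclusion $EDC_{KN_n} \subseteq \cone(R_n)$ is exactly Corollary \ref{part2}. Because $R_n$ is a finite generating set for the cone, every extreme ray of $EDC_{KN_n}$ must be a nonnegative multiple of some $r_\tau \in R_n$; hence the collection of extreme rays is contained in the set of rays spanned by $R_n$.

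It then remains to show that no $r_\tau$ is redundant, that is, each $r_\tau$ spans an extreme ray. Here I would invoke Lemma \ref{part3}: intersecting $EDC_{KN_n}$ with the hyperplane $\sum_{1 \le i < j \le n} \delta(i,j) = 1$ produces the polytope $P_{KN_n}$, whose vertices correspond to the extreme rays of the cone, and by the previous paragraph $P_{KN_n} = \mathrm{conv}\{ p_\tau : r_\tau \in R_n \}$. Lemma \ref{part3} supplies, for each $\tau$, a linear functional $p^*_\tau$ that is strictly maximized at $p_\tau$ among all of the points $p_{\tau'}$. Since the maximum of a linear functional over a polytope is attained at a vertex, and $p_\tau$ strictly beats every other generator, $p_\tau$ cannot be written as a convex combination of the remaining generators; therefore $p_\tau$ is a vertex of $P_{KN_n}$, and correspondingly $r_\tau$ spans an extreme ray of $EDC_{KN_n}$.

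Combining the two directions yields that the extreme rays of $EDC_{KN_n}$ are precisely $R_n$. I do not anticipate a genuine obstacle, since all of the substantive work has already been carried out in Lemmas \ref{part1}, \ref{rtmethod}, and \ref{part3}; the only point requiring care is the standard convex-geometric translation between ``uniquely maximized by a linear functional among the generators'' and ``is a vertex,'' which relies on $P_{KN_n}$ being exactly the convex hull of the normalized generators $p_\tau$, as established by the equality $EDC_{KN_n} = \cone(R_n)$.
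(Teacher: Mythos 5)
Your proposal is correct and follows essentially the same route as the paper: the paper's proof of Theorem \ref{thm:extremerays} simply combines Lemma \ref{part1}, Corollary \ref{part2}, and Lemma \ref{part3}, with the convex-geometric translation you spell out (passing to the polytope $P_{KN_n}$, noting it is the convex hull of the points $p_\tau$ because $EDC_{KN_n}=\cone(R_n)$, and using the strictly maximizing functional $p^*_\tau$ to certify each $p_\tau$ as a vertex) already carried out inside the proof of Lemma \ref{part3}. Your write-up just makes explicit the assembly step that the paper leaves as a one-line citation.
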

\begin{proof}
 This follows from Lemmas \ref{part1}, \ref{part2}, and \ref{part3}.
 \end{proof}

Using this same association we can describe which facets  of $EDC_{KN_n}$ each $r_{\tau}$ lies on.

 \begin{proposition} \label{lieon}
   Let $KN_n$ be the complete rooted circular split network on $n$ leaves. The extreme ray, $r_{\tau}$, is contained in all facets of $KN_n$ except the following:
   \begin{itemize}
    
\item Left inequalities,
$\delta(1,i) \leq \delta(1,i+1)$ where $i$ is separated from $i+1$ and $i$ is not separated from $1$ in $\tau$,
\item Right inequalities,
$\delta(i,n)\leq \delta(i-1,n)$
where  $i$ is separated from $i-1$  and $i$ is not separated from $n$ in $\tau$,
\item Triangle inequalities,
$\delta(i-1,i+1)\leq \delta(i-1,i) +\delta(i,i+1)$ where  $i$ is separated from $i-1$ and $i$ is separated from $i+1$  in $\tau$,
\item Covering  inequalities,
$\delta(i,j)+\delta(i-1,j+1) \leq \delta(i,j+1) +\delta(i-1,j)$ where  $i$ is not separated from $j$ but $i$ is separated from $i-1$ and  $j$ is separated from $j+1$ and in $\tau$. 

 \end{itemize}
 \end{proposition}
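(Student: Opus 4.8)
The plan is to evaluate each of the four families of facet inequalities from Theorem \ref{facets} directly on the vector $r_\tau$ and read off exactly when equality holds; the listed exceptions should then turn out to be precisely the cases where the inequality is strict. Since the proof of Lemma \ref{part1} already verifies that every $r_\tau$ satisfies all of these inequalities, this proposition is really a bookkeeping refinement of that argument: I only need to revisit each case of that proof and track whether the two sides agree or differ by exactly one.

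The key combinatorial tool I would isolate first is the ``max identity'': for a fixed order set partition $\tau$ and indices $a < b < c$, one has $r_\tau(a,c) = \max(r_\tau(a,b), r_\tau(b,c))$. This holds because the blocks of $\tau$ are intervals, so $a$ and $c$ lie in a common block if and only if $a$, $b$, $c$ all do; hence $r_\tau(a,c) = 0$ forces $r_\tau(a,b) = r_\tau(b,c) = 0$, and $r_\tau(a,c) = 1$ forces at least one of them to equal $1$. With this identity the analysis of each facet reduces to a finite check on $\{0,1\}$-valued quantities.

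For the left inequalities $\delta(1,i) \le \delta(1,i+1)$, since the block containing $1$ is an initial segment $[1,m]$, we have $r_\tau(1,i) \le r_\tau(1,i+1)$, and equality fails exactly when $r_\tau(1,i) = 0$ and $r_\tau(1,i+1) = 1$, i.e. $m = i$, which is precisely the condition that $i$ is not separated from $1$ while $i$ is separated from $i+1$. The right inequalities follow identically under the symmetry $i \mapsto n+1-i$. For the triangle inequalities I would reuse the case split of Lemma \ref{part1}: if $i-1$ and $i+1$ share a block then so does $i$ and both sides vanish; otherwise $r_\tau(i-1,i+1) = 1$ and the right side equals $r_\tau(i-1,i) + r_\tau(i,i+1) \in \{1,2\}$, which exceeds $1$ exactly when $i$ is separated from both $i-1$ and $i+1$.

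The main work, and the only step that is not immediate, is the covering inequalities $\delta(i,j) + \delta(i-1,j+1) \le \delta(i,j+1) + \delta(i-1,j)$. Here I would apply the max identity to rewrite $r_\tau(i-1,j+1)$, $r_\tau(i,j+1)$, and $r_\tau(i-1,j)$ in terms of $r_\tau(i,j)$, $r_\tau(i-1,i)$, and $r_\tau(j,j+1)$, exactly as in the display in the proof of Lemma \ref{part1}. If $r_\tau(i,j) = 1$ the inequality collapses to $2 \le 2$; if $r_\tau(i,j) = 0$ it becomes $\max(r_\tau(i-1,i), r_\tau(j,j+1)) \le r_\tau(i-1,i) + r_\tau(j,j+1)$, which is strict precisely when both terms equal $1$, i.e. $i$ is not separated from $j$ but $i$ is separated from $i-1$ and $j$ is separated from $j+1$. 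Matching this against the stated condition finishes the covering case and hence the proposition. The subtle point to get right is that the max rewriting is valid only because the blocks are intervals, so I would invoke the fixed order hypothesis explicitly at that step.
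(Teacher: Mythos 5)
Your proposal is correct and takes essentially the same approach as the paper's proof: a direct case-by-case evaluation of each facet family on $r_\tau$, using the interval structure of the blocks to determine exactly when equality holds, with the covering case handled by the same max-rewriting that appears in the proof of Lemma \ref{part1}. The ``max identity'' you isolate is simply an explicit formulation of the interval-block observation the paper uses implicitly, so the two arguments coincide in substance.
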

   \begin{proof}
  For left inequalities, $\delta(1,i) \leq \delta(1,i+1)$, the only way there cannot be equality is if $\delta(1,i)=0$ and  $\delta(1,i+1)=1$. From the definition of $r_{\tau}$, this happens precisely when $i$ is separated from $i+1$ and $i$ is not separated from $1$ in $\tau$.   The same argument applies by symmetry to the right inequalities.
  
  For the triangle inequalities, $\delta(i-1,i+1)\leq \delta(i-1,i) +\delta(i,i+1)$, if $\delta(i-1,i+1)=0$ that means that $i-1$ and $i+1$ are in the same block in $\tau$. This implies that $\delta(i-1,i)=0$ and $\delta(i,i+1)=0$. Thus the only way to not have equality on this facet is if $\delta(i-1,i+1)=1$ and $\delta(i-1,i) +\delta(i,i+1)=2$. This happens precisely when $i$ is separated from $i-1$ and $i$ is separated from $i+1$  in $\tau$.
  
  For the covering inequalities,
$\delta(i,j)+\delta(i-1,j+1) \leq \delta(i,j+1) +\delta(i-1,j)$, if $\delta(i,j)=1$, that means that $i$ and $j$ are separated in $\tau$. Then $\delta(i-1,j+1)=1$ as well as $\delta(i,j+1) =1$ and $\delta(i-1,j)=1$. Thus the inequality becomes $2\leq 2$. Thus in order for $r_{\tau}$ to not be on the covering inequality $i$ and $j$ must not be separated in $\tau$. So assuming $\delta(i,j)=0$, if $\delta(i-1,j+1)=0$ as well, $i$, $j$, $i-1$, $j+1$ are all in the same block and so $\delta(i,j+1) =0$ and $\delta(i-1,j)=0$ as well, making the inequality $0\leq 0$. Thus the only way to not get equality on this facet is if $\delta(i,j)=0$, $\delta(i-1,j+1)=1$, $\delta(i,j+1)=1$, $\delta(i-1,j)=1$, which is exactly when  $i$ is not separated from $j$ but $i$ is separated from $i-1$ and  $j$ is separated from $j+1$ and   in $\tau$.
\end{proof}

 \begin{corollary} \label{noncompleterays}
 Let $N$ be a circular split network.
The extreme rays of $EDC_{N}$ are the subset of $R_n$ that are contained in $EDC_N$.
 \end{corollary}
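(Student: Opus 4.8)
The plan is to show that $EDC_N$, as a face of $EDC_{KN_n}$ by Corollary \ref{face}, has as its extreme rays exactly those extreme rays of $EDC_{KN_n}$ that happen to lie in the face. This is a general fact about faces of polyhedral cones: if $F$ is a face of a pointed polyhedral cone $C$, then the extreme rays of $F$ are precisely the extreme rays of $C$ that are contained in $F$. So the bulk of the work is to invoke Theorem \ref{thm:extremerays}, which identifies the extreme rays of $EDC_{KN_n}$ as $R_n$, and then apply this face principle.

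First I would recall from Corollary \ref{face} that $EDC_N$ is a face of $EDC_{KN_n}$, obtained by turning some of the facet-defining inequalities of $EDC_{KN_n}$ into equalities (namely those that Lemma \ref{singlevar} rewrites as $2a_{A|B} \geq 0$ for splits $A|B \notin N$). A face $F = C \cap H$ of a cone $C$ cut out by a supporting hyperplane $H$ has the property that a ray $\rho$ of $C$ lies in $F$ if and only if $\rho \subseteq H$; more generally $F$ is itself a polyhedral cone whose extreme rays are a subset of those of $C$. The key step is to argue both inclusions: every extreme ray of $EDC_N$ is an extreme ray of $EDC_{KN_n}$ lying in $EDC_N$, and conversely every extreme ray of $EDC_{KN_n}$ that lies in $EDC_N$ remains extreme in the smaller cone $EDC_N$.

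The forward direction uses that $EDC_N \subseteq EDC_{KN_n}$, so any ray generator of $EDC_N$ is a point of $EDC_{KN_n}$; since $EDC_N$ is a face, a point extreme in the face is extreme in the whole cone (a decomposition in $EDC_{KN_n}$ of a point on a face must use only summands on that face, because the face is exposed by a valid inequality that vanishes there). The reverse direction is the routine observation that if $r_\tau \in R_n$ is an extreme ray of the larger cone and $r_\tau \in EDC_N$, then any expression $r_\tau = x + y$ with $x, y \in EDC_N \subseteq EDC_{KN_n}$ forces $x, y$ to be nonnegative multiples of $r_\tau$ by extremality in $EDC_{KN_n}$, so $r_\tau$ is extreme in $EDC_N$ as well. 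I would conclude that the extreme rays of $EDC_N$ are exactly $R_n \cap EDC_N$, and note that Proposition \ref{lieon} makes the membership $r_\tau \in EDC_N$ explicitly checkable via which facets $r_\tau$ lies on, since $EDC_N$ is described by promoting a specified set of facet inequalities to equalities.

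The main obstacle, though modest, is being careful that this is genuinely a statement about faces and not merely about the ambient inequality description: I must confirm that $EDC_N$ is the face obtained by setting the relevant inequalities to equalities (given by Corollary \ref{face}), and that faces of pointed polyhedral cones inherit their extreme rays from the parent cone. Since $EDC_{KN_n}$ sits in the positive orthant it is pointed, so this face principle applies cleanly, and the proof reduces to citing Theorem \ref{thm:extremerays} and Corollary \ref{face} together with the elementary face-extreme-ray lemma.
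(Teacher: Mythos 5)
Your proposal is correct and follows essentially the same route as the paper: the paper's proof simply cites Corollary \ref{face} (that $EDC_N$ is a face of $EDC_{KN_n}$) together with Theorem \ref{thm:extremerays}, relying on the standard fact that extreme rays of a face of a pointed cone are exactly the extreme rays of the parent cone contained in that face. You spell out both inclusions of that face principle explicitly, which the paper leaves implicit, but the argument is the same.
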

 
 \begin{proof}
 Since $EDC_{N}$ is a face of $EDC_{KN_n}$ by Corollary \ref{face}, its extreme rays must be a subset of $R_n$
 contained in the face.
 \end{proof}
 
 \begin{exmp}
Consider $EDC_{KN_5}$ which has extreme rays associated to the following fixed order set partitions:  \begin{align*} &1|2345,12|345,123|45,1234|5,1|2|345,1|23|45,1|234|5, 12|3|45,12|34|5,123|4|5,\\
    &
    1|2|3|45,1|2|34|5,1|23|4|5,12|3|4|5,1|2|3|4|5.
    \end{align*}      
    Using Proposition \ref{lieon} we can characterize which facets of  $EDC_{KN_5}$ each of these rays lies on. For example:

 \begin{align*} &1|2345 \text{ lies on all facets except } \delta(2,5) \leq \delta(1,5),\\
 &1|23|45 \text{ lies on all facets except } \delta(4,5) \leq \delta(3,5),\delta(2,3) +\delta(1,4) \leq \delta(1,3)+\delta(2,4); \\
  &1|2|3|4|5 \text{ lies on all facets except } \delta(1,3) \leq \delta(1,2)+\delta(2,3), \delta(2,4) \leq \delta(2,3)+\delta(3,4),\\
  &\delta(3,5) \leq \delta(3,4)+\delta(4,5);\\
    \end{align*}   
 
   \end{exmp}

   \begin{exmp}
       Consider the split system $N=\{01|2345,12|0345,0145|23,0123|45 \}$ together with the trivial splits,
       which are visualized in Figure \ref{dualpolyfig}. 
       This split system is a subset of $KN_5$ where the splits, 
       \[
       \{012|345,05|1234,045|123,015|234,0125|34 \}
       \] 
       are not in $N$.  So $N$ can be obtained from $KN_5$ by setting the weights for those splits equal to $0$. Specifically, this means that $EDC_N$ will lie in the following equalities, corresponding to 
       those splits
       \begin{align*}         
          &\delta(2,5) - \delta(3,5)=2a_{012|345}=0 \\
          & \delta(1,5)-\delta(1,4)=2a_{05|1234}=0; \delta(1,4)-\delta(1,3)=2a_{045|123}=0; \\
           &\delta(1,4) +\delta(2,5) -\delta(2,4)-\delta(1,5)=2a_{015|234}=0; \\
          & \delta(2,4) +\delta(3,5) -\delta(3,4)-\delta(2,5)=2a_{0125|34}=0;\\
     \end{align*}
       
       Using Corollary  \ref{noncompleterays}, $EDC_{N}$ has the extreme rays associated to the following fixed order set partitions:

\begin{align*} &1|2345,1|23|45,12|3|45,
    1|2|3|45,1|23|4|5,12|3|4|5,1|2|3|4|5.
    \end{align*}

       \end{exmp}


As mentioned in the introduction, $EDC_{N}$ can be used to determine if a set of closely related species may have hybridization, using the facet description. While the extreme ray description of this cone is combinatorially nice, we did not see a clear biological interpretation of the extreme rays and this is a potential area of further study.

\section{The Chan-Robbins-Yuen Polytope}

The Chan-Robbins-Yuen Polytope ($CRY_n$) is a face of the Birkhoff polytope,
and appears in other contexts as an example of a flow polytope \cite{Meszaros2015}.
It has generated interest in the combinatorics
community because its normalized volume is a product of Catalan numbers,
specifically:
\[
{\rm Vol}(CRY_n)  =  \prod_{i = 1}^{n-2}  \mathrm{Cat(i)}
\]
where $\mathrm{CAT}(i) = \tfrac{1}{i+1} \binom{2i}{i}$. This polytope was first discussed in \cite{CRY2000}, and subsequently studied by many authors.
In this section, we show a relation between the Chan-Robbins-Yuen polytope and the
cone $EDC_{KN_n}$.

\begin{definition}
    The Chan-Robbins-Yuen Polytope ($CRY_n$) is defined by the following set of equations and inequalities:
    \begin{align*}
CRY_n  &=  \{  x \in \mathbb{R}^{n \times n} :       x_{ij} \geq 0 \mbox{ for all } i,j \in [n], \\
  &  \quad \quad \sum_{i = 1}^n  x_{ij} = 1 \mbox{ for all } j \in [n],  \quad    \sum_{j = 1}^n  x_{ij} = 1 \mbox{ for all } i \in [n],  \mbox{ and }\\
   & \quad \quad  x_{ij} = 0, \mbox{ if } i -j > 1 \}.
    \end{align*}
\end{definition} 

Each element of $CRY_n$ is an $n \times n$, doubly-stochastic matrix, which is zero below the first subdiagonal.
The vertices of $CRY_n$ are the permutation matrices that satisfy the condition that  $x_{ij} = 0, \mbox{ if } i -j > 1$.
Note that there are exactly $2^{n-1}$ such permutation matrices in total.  

To explain the relationship between the Chan-Robbins-Yuen polytope and equidistant network cone, we introduce a related polytope:
\[
PEDC_n  =  EDC_{KN_n}  \cap \{ \delta \in \mathbb{R}^{n(n-1)/2} :   \delta(1,n) \leq 1  \}
\]
Since all of the extreme rays of $EDC_{KN_n}$ have a $1$ in the $\delta(1,n)$ coordinate, this polytope has vertices consisting 
of all the vectors $r_\tau$ for all fixed ordered set partitions of $[n]$, including for the set partition with a single block
$123\cdots n$, which yields the origin.  Note that $PEDC_n$ then also has $2^{n-1}$ vertices.  

\begin{theorem}\label{thm:CRY}
    The polytopes $PEDC_n$ and $CRY_n$ are affinely isomorphic.  The isomorphism preserves the lattice spanned by the vertices in these
    polytopes, hence these polytopes have the same Ehrhart series and normalized volume.
\end{theorem}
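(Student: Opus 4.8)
The plan is to exhibit a single explicit affine map $\Phi$ from the ambient space $\mathbb{R}^{n(n-1)/2}$ of $PEDC_n$ into the space of $n\times n$ matrices, show it carries $PEDC_n$ bijectively onto $CRY_n$, and check that it is integral and a bijection on the vertex-spanned lattices. I build $\Phi$ from the augmented matrix $\tilde{\delta}$ (the $(n+1)\times(n+1)$ matrix whose bottom-left $n\times n$ block is $\delta$ and whose first row and last column are all $1$'s), using mixed second differences:
\begin{equation}
\Phi(\delta)_{ij} =
\begin{cases}
\tilde{\delta}(i-1,j)+\tilde{\delta}(i,j+1)-\tilde{\delta}(i,j)-\tilde{\delta}(i-1,j+1) & \text{if } i \le j, \\
1-\delta(i-1,i) & \text{if } j = i-1, \\
0 & \text{if } j < i-1.
\end{cases}
\end{equation}

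First I would verify that $\Phi(\delta)\in CRY_n$ for every $\delta\in PEDC_n$. The entries with $i\le j$ are exactly the $2\times 2$ corner quantities $\tilde{\delta}(k,l)+\tilde{\delta}(k+1,l+1)-\tilde{\delta}(k+1,l)-\tilde{\delta}(k,l+1)$, so their nonnegativity is precisely the facet system of Theorem \ref{facets} together with the inequality $\delta(1,n)\le 1$ (which is the corner at $(0,n)$, giving the entry $1-\delta(1,n)$); the subdiagonal entries $1-\delta(i-1,i)$ are nonnegative since $\delta(i-1,i)\le\delta(1,n)\le 1$ by Lemma \ref{otherineq}. A short telescoping then shows that every row and every column of $\Phi(\delta)$ sums to $1$: summing the second differences along a fixed row or column collapses, using $\tilde{\delta}(i,i)=0$ and the constant boundary row and column, to leave exactly $\delta(i-1,i)$ (resp. $\delta(j,j+1)$), and the designated subdiagonal entry supplies the remaining mass. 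Thus $\Phi(\delta)$ is doubly stochastic, vanishes strictly below the subdiagonal, and is nonnegative, so it lies in $CRY_n$.

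Next I would match vertices. Since $\Phi$ has integer coefficients and each $r_\tau$ is a $0/1$ vector, $\Phi(r_\tau)$ is an integer matrix; being doubly stochastic and nonnegative, it must be a permutation matrix, hence a vertex of $CRY_n$. The map $\tau\mapsto\Phi(r_\tau)$ is injective because the subdiagonal entries of $\Phi(r_\tau)$ equal $1-r_\tau(i-1,i)$, which detect exactly the block-cuts of $\tau$ and therefore recover $\tau$. As there are $2^{n-1}$ fixed order set partitions and exactly $2^{n-1}$ vertices of $CRY_n$, this injection is a bijection onto the vertex set. Because $\Phi$ is affine and sends the vertices of $PEDC_n=\mathrm{conv}\{r_\tau\}$ bijectively onto the vertices of $CRY_n$, it maps $PEDC_n$ onto $CRY_n$.

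Finally I would conclude the isomorphism and lattice statements. The chain $\binom{n}{2}=\dim CRY_n=\dim\Phi(PEDC_n)\le\dim PEDC_n\le\binom{n}{2}$ forces equality throughout, so $\Phi$ preserves dimension and is injective on $\mathrm{aff}(PEDC_n)$, hence an affine isomorphism $PEDC_n\to CRY_n$. Integrality gives $\Phi(L_1)\subseteq L_2$ for the affine lattices $L_1,L_2$ spanned by the vertices; since every generator of $L_2$ has the form $\Phi(r_\tau)$ with $r_\tau\in L_1$, we get $\Phi(L_1)=L_2$, and injectivity makes this a lattice isomorphism, so all dilates contain equally many lattice points, yielding equal Ehrhart series and equal normalized volume. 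The main obstacle is pinning down the definition of $\Phi$ so that the telescoping really produces a doubly stochastic matrix and so that the nonnegativity of its entries matches the facet description of Theorem \ref{facets} (plus $\delta(1,n)\le1$) exactly; once the second-difference formula is fixed, the remaining verifications are routine.
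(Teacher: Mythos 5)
Your proposal is correct, and your map $\Phi$ is in fact exactly the paper's inverse map $\psi$, written uniformly as mixed second differences of the augmented matrix $\tilde{\delta}$ (up to an apparent index shift in the paper's stated $(i,n)$ case, where your formula $\delta(i-1,n)-\delta(i,n)$ is the internally consistent one). Where you genuinely diverge is in how bijectivity is established. The paper constructs both directions explicitly: the partial-sum map $\phi_{k,l}(x)=1-\sum_{i\leq k}\sum_{j\geq l}x_{ij}$ and its inverse $\psi$, and proves the theorem by substituting $\phi$ into the facet system of $PEDC_n$ and showing it transforms exactly into the defining system of $CRY_n$ --- the delicate point there being that the triangle inequalities become $x_{l,l}\geq 0$ only after invoking the doubly stochastic relations. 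You never construct $\phi$; instead you verify one containment directly (nonnegativity of the entries of $\Phi(\delta)$ from Theorem \ref{facets} plus $\delta(1,n)\leq 1$, and double stochasticity by telescoping, which checks out: each row sum collapses to $\delta(i-1,i)$ and the subdiagonal entry $1-\delta(i-1,i)$ supplies the rest, using $\tilde{\delta}(i,i)=0$ and the constant boundary), and then you obtain surjectivity and injectivity by a vertex-counting and dimension argument: $0/1$ rays map to integral doubly stochastic matrices, hence permutation matrices; the subdiagonal entries recover the cuts of $\tau$, giving injectivity; and $2^{n-1}=2^{n-1}$ closes the count, with $\dim CRY_n=\binom{n}{2}$ forcing injectivity on the affine hull. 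Your route buys a cleaner finish --- no case-by-case verification that $\psi\circ\phi=\mathrm{id}$, and it makes the vertex correspondence (which the paper only describes informally after its proof) the engine of the argument; the paper's route buys the explicit inverse and a facet-to-facet dictionary between the two inequality systems. One small repair: your sentence ``integrality gives $\Phi(L_1)\subseteq L_2$'' is not quite right as stated, since integrality only places images in $\mathbb{Z}^{n\times n}$, not in the possibly smaller lattice $L_2$; the correct (and sufficient) justification, which you essentially have in the next clause, is that an affine map carries affine integer combinations of the $r_\tau$ to affine integer combinations of the $\Phi(r_\tau)$, so $\Phi(L_1)=L_2$ directly, and injectivity upgrades this to a lattice isomorphism. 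Also note that two of your entries, $\Phi_{1,1}=\delta(1,2)$ and $\Phi_{n,n}=\delta(n-1,n)$, are nonnegative by the dissimilarity condition (or Lemma \ref{otherineq}) rather than literally by a facet of Theorem \ref{facets}; this is harmless but worth stating precisely.
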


To prove Theorem \ref{thm:CRY} we will construct the affine isomorphism explicitly.
Consider the map $\phi: \mathbb{R}^{n \times n}  \rightarrow \mathbb{R}^{n(n-1)/2},$
defined by
\[
\phi_{k,l}(x) =  1 - \sum_{i = 1}^{k} \sum_{j = l }^n  x_{i,j},  \mbox{ for }  1 \leq k < l \leq n.
\]
The inverse map $\psi:  \mathbb{R}^{n(n-1)/2} \rightarrow \mathbb{R}^{n \times n}$ is provided by the following

{\footnotesize
\[
\psi_{k,l}(\delta)  =  \begin{cases}
    1 - \delta(1,n)   &  (k,l) = (1,n)  \\
    \delta(1,j+1) - \delta(1,j) &  (k,l) = (1,j), 2 \leq j \leq n-1  \\
    \delta(i,n) - \delta(i+1, n) &  (k,l) = (i,n), 2 \leq i \leq n-1  \\
    \delta(1,2)   &  (k,l) = (1,1)  \\
    \delta(n-1, n)  &   (k,l) = (n,n)  \\
    \delta(i-1, j) + \delta(i,j+1) - \delta(i, j) - \delta(i-1, j+1)  &  (k,l) = (i,j),  2 \leq i < j < n-1  \\
    \delta(i-1,i) + \delta(i, i+1) - \delta(i-1, i+1)  &  (k,l) = (i,i),  2 \leq i \leq n-1  \\
    1- \delta(i,i+1)  &  (k,l) =  (i+1, i)
\end{cases}
\]
}

\begin{proof}
    We want to show that the inequality system that defines the polytope $PEDC_n$, transforms into the
    inequality system for  $CRY_n$ when applying the map $\phi$.  That is, we consider the inequalities
    satisfied by $\delta \in PEDC_n$, and then make the substitution $\delta(k,l) = \phi_{k,l} (x)$, we should
    get the inequality system that describes $CRY_n$.
    We investigate each type of inequality that defines $PEDC_n$.

    First, we consider the inequality $\delta(1,n) \leq 1.$  Applying $\phi$ we get $\phi_{1,n}(x)  = 1 - x_{1,n} \leq 1$,
    which is equivalent to $x_{1,n} \geq 0$.

    Next we consider the inequalities $\delta(1, l) \leq \delta(1,l+1)$.  Applying $\phi$ we get  
    \[
        1 - \sum_{i = l}^n x_{1, i}  \leq  1 - \sum_{i = l+1}^n x_{1, i}
    \]
    which is equivalent to $x_{1,l} \geq 0.$  Similarly, the inequalities $\delta(k+1, n) \leq \delta(k,n)$ yields the inequality $x_{k,n}\geq 0$.

    For the covering inequalities $\delta(i,j) + \delta(i-1, j+1)  \leq  \delta(i-1, j) + \delta(i, j+1)$, substitution and cancellation yields that this
    is equivalent to $x_{i,j} \geq 0$, using a simple inclusion and exclusion argument.  

    The most difficult to analyze are the triangle inequalities, $\delta(l-1, l+1) \leq  \delta(l-1, l) + \delta(l, l+1)$. 
    After making the substitution from $\phi$, we get the inequality
    \[
        \sum_{i = 1}^l  \sum_{j = l}^n x_{i,j}  \leq  1 + x_{l,l}
    \]
    We claim that this is equivalent to the inequality $x_{l,l} \geq 0$.  To prove this, we use the double stochastic feature
    of the polytope $CRY_n$.  In particular, we have the $\sum_{i = 1}^{j+1}  x_{i,j} = 1$ and  $\sum_{j = i-1}^n x_{i,j} = 1$.
    In particular, the sum of the first $l$ rows of $x$ equals $l$, while the sum of the first $l-1$ columns is $l-1$.  Since
    the first $l-1$ columns are all zero below the subdiagonal, the difference between these two sums, on the one hand, is equal to $1$, and on the 
    other hand, is equal  to $\sum_{i = 1}^l  \sum_{j = l}^n x_{i,j} $.  Thus we get, $1 \leq 1 + x_{l,l}$, or $x_{l,l} \geq 0$.

    At this point, we have shown that the inequalities that define $PEDC_n$ become the inequalities, $x_{i,j} \geq 0$ for
    all $1 \leq j < k \leq n$, and the inequalities $x_{i,i} \geq 0$ for $i = 2, \ldots, n-1$.  
    This shows that $\phi(CRY_n) \subseteq PEDC_n$.

    Next, we must verify that $\psi$ is the inverse map of $\phi$ and that $\psi(PEDC_n) \subseteq CRY_n$.
    The fact that $\psi$ is the inverse map can be checked directly by applying it coordinate by coordinate, using
    the different formula for $\psi_{k,l}$.  Most of this follows from the argument above.  For instance, the proof
    that the covering inequality turns into the inequality $x_{i,j} \geq 0$, shows that $(\psi \circ \phi)_{i,j} (x)  = x_{i,j}$
    when $1< i < j < n-1$.  A similar approach follows for the other coordinates.
\end{proof}

It is worth noting how the maps $\phi$ and $\psi$ transform vertices of the two polytopes.
The map is easiest to see on $\phi:  CRY_n \rightarrow PEDC_n$.
Each permutation matrix that is in $CRY_n$ has the form of a block diagonal matrix, where each diagonal block is
either a $1 \times 1$ block of a $1$, or is a $k \times k$ block that has $1$'s on the subdiagonal and a $1$ in the upper
right corner.  The map $\phi$ sends this permutation matrix to the $0/1$ upper triangular array obtained by putting ones
precisely in the part above and to the right of all the blocks of permutation matrices.

Here is an example of such a permutation matrix in $CRY_9$ and the vector $r_\tau$ that it maps to in $PEDC_9$ under $\phi$:
{\footnotesize\[
\begin{pmatrix}
    1 &   & &  & & & & &      \\
      &  0 & 0 & 1 & & & & &  \\
      &  1 & 0 & 0 & & & & &  \\
      &  0 & 1 & 0 & & & & &  \\
      &  & & &  0 & 1 & & &  \\
      & & & &   1 & 0 & & &   \\
      & & & &  &   &  1 & &  \\
      &  & & & &   &   &  0 & 1  \\
      & & & & &   &   &  1 &  0
\end{pmatrix}   \mapsto  
\begin{pmatrix}
   & 1 & 1 & 1 & 1 & 1 & 1 & 1 & 1 \\
   &   & 0 & 0 & 1 & 1 & 1 & 1 & 1  \\
   &   &   & 0 & 1 & 1 & 1 & 1 & 1  \\
   &   &    &  & 1 & 1 & 1 & 1 & 1  \\
    &   &    &  &  & 0 & 1 & 1 & 1  \\
    &   &    &  &  &  & 1 & 1 & 1  \\    
    &   &    &  &  &  &   & 1 & 1  \\   
    &   &    &  &  &  &   &  &  0  \\  
    &   &    &  &  &  &   &  &    \\  
\end{pmatrix}.
\]
}

\bibliographystyle{plain}
\bibliography{Equidistance_Networks_2.bib}

\end{document}